\newcommand{\email}[1]{\href{mailto:#1}{#1}}
\numberwithin{equation}{section}
\newtheorem{theorem}{Theorem}
\newtheorem{lemma}[theorem]{Lemma}
\theoremstyle{remark}
\newtheorem{remark}[theorem]{Remark}
\theoremstyle{definition}
\newtheorem{assumption}[theorem]{Assumption}
\newcommand{\Real}{\mathbb{R}}
\DeclareRobustCommand{\bvec}[1]{\boldsymbol{#1}}
  \renewcommand{\bvec}[1]{#1}%
\newcommand{\uvec}[1]{\underline{\bvec{#1}}}
\DeclareMathOperator{\GRAD}{\mathbf{grad}}
\DeclareMathOperator{\CURL}{\mathbf{curl}}
\DeclareMathOperator{\DIV}{div}
\newcommand{\gensymbol}{\bullet}
\newcommand{\Hgrad}[1]{H(\GRAD,#1)}
\newcommand{\Hcurl}[1]{\boldsymbol{H}(\CURL,#1)}
\newcommand{\Hdiv}[1]{\boldsymbol{H}(\DIV,#1)}
\newcommand{\Hgen}[1]{H(\gensymbol,#1)}
\newcommand{\HgradG}[1]{H_{\Gamma}(\GRAD,#1)}
\newcommand{\HgradS}[1]{H_{\Sigma}(\GRAD,#1)}
\newcommand{\HcurlG}[1]{\boldsymbol{H}_{\Gamma}(\CURL,#1)}
\newcommand{\HcurlS}[1]{\boldsymbol{H}_{\Sigma}(\CURL,#1)}
\newcommand{\HdivG}[1]{\boldsymbol{H}_{\Gamma}(\DIV,#1)}
\newcommand{\HdivS}[1]{\boldsymbol{H}_{\Sigma}(\DIV,#1)}
\newcommand{\HdivGc}[1]{\boldsymbol{H}_{\Gamma^\compl}(\DIV,#1)}
\newcommand{\HgenG}[1]{H_{\Gamma}(\gensymbol,#1)}
\newcommand{\HgenS}[1]{H_{\Sigma}(\gensymbol,#1)}
\newcommand{\HcurlGcmu}[1]{\boldsymbol{H}_{\Gamma^\compl}(\CURL_\mu,#1)}
\newcommand{\Hcurlmu}[1]{\boldsymbol{H}(\CURL_\mu,#1)}
\newcommand{\HdivGcmu}[1]{\boldsymbol{H}_{\Gamma^\compl}(\DIV_\mu,#1)}
\newcommand{\Hdivmu}[1]{\boldsymbol{H}(\DIV_\mu,#1)}
\newcommand{\Hgradmu}[1]{H(\GRAD_\mu,#1)}
\newcommand{\HgenSmu}[1]{H_{\Sigma}(\gensymbol_\mu,#1)}
\newcommand{\Hgenemptymu}[1]{H_{\emptyset}(\gensymbol_\mu,#1)}
\newcommand{\Hgenmu}[1]{H(\gensymbol_\mu,#1)}
\newcommand{\compl}{{\rm c}}
\newcommand{\symbolproj}{\pi}
\newcommand{\lproj}[2]{\symbolproj_{\mathcal{P},#2}^{#1}}
\newcommand{\genlproj}[2]{\symbolproj_{\gensymbol,#2}^{#1}}
\newcommand{\RTproj}[2]{\symbolproj_{\RTsymbol,#2}^{#1}}
\newcommand{\NEproj}[2]{\symbolproj_{\NEsymbol,#2}^{#1}}
\newcommand{\Xgrad}[2]{\underline{X}_{\GRAD}^{#1}(#2)}
\newcommand{\Xcurl}[2]{\underline{X}_{\CURL}^{#1}(#2)}
\newcommand{\Xdiv}[2]{\underline{X}_{\DIV}^{#1}(#2)}
\newcommand{\Xgen}[2]{\underline{X}_{\gensymbol}^{#1}(#2)}   
\newcommand{\XgradG}[2]{\underline{X}_{\GRAD,\Gamma}^{#1}(#2)}
\newcommand{\XcurlG}[2]{\underline{X}_{\CURL,\Gamma}^{#1}(#2)}
\newcommand{\XdivG}[2]{\underline{X}_{\DIV,\Gamma}^{#1}(#2)}
\newcommand{\dof}[2]{\underline{#1}_{#2}}
\newcommand{\vdof}[2]{\underline{\bvec{#1}}_{#2}}
\newcommand{\interpsymbol}{\underline{I}}
\newcommand{\Igrad}[2]{\interpsymbol_{\GRAD,#2}^{#1}}
\newcommand{\Icurl}[2]{\interpsymbol_{\CURL,#2}^{#1}}
\newcommand{\Idiv}[2]{\interpsymbol_{\DIV,#2}^{#1}}
\newcommand{\Igen}[2]{\interpsymbol_{\gensymbol,#2}^{#1}}
\newcommand{\Lagsymbol}{\GRAD}
\newcommand{\NEsymbol}{\CURL}
\newcommand{\RTsymbol}{\DIV}
\newcommand{\Lag}[2]{V_{\Lagsymbol}^{#1}(#2)}
\newcommand{\NE}[2]{\boldsymbol{V}_{\NEsymbol}^{#1}(#2)}
\newcommand{\RT}[2]{\boldsymbol{V}_{\RTsymbol}^{#1}(#2)}
\newcommand{\FEgen}[2]{{V}_{\gensymbol}^{#1}(#2)}
\newcommand{\LagG}[2]{V_{\Lagsymbol,\Gamma}^{#1}(#2)}
\newcommand{\NEG}[2]{\boldsymbol{V}_{\NEsymbol,\Gamma}^{#1}(#2)}
\newcommand{\RTG}[2]{\boldsymbol{V}_{\RTsymbol,\Gamma}^{#1}(#2)}
\newcommand{\FEgenG}[2]{{V}_{\gensymbol,\Gamma}^{#1}(#2)}
\newcommand{\DFE}{\mathcal D_{\rm FE}}
\newcommand{\DDDR}{\mathcal D_{\rm DDR}}
\newcommand{\QFEinterpsymbol}{\widehat{\mathcal J}}
\newcommand{\qILag}[2]{\QFEinterpsymbol_{\Lagsymbol,#2}^{#1}}
\newcommand{\qINE}[2]{\QFEinterpsymbol_{\NEsymbol,#2}^{#1}}
\newcommand{\qIRT}[2]{\QFEinterpsymbol_{\RTsymbol,#2}^{#1}}
\newcommand{\qIFEgen}[2]{\QFEinterpsymbol_{\gensymbol,#2}^{#1}}
\newcommand{\Qinterpsymbol}{\underline{\widehat{I}}}
\newcommand{\qIgrad}[2]{\Qinterpsymbol_{\GRAD,#2}^{#1}}
\newcommand{\qIcurl}[2]{\Qinterpsymbol_{\CURL,#2}^{#1}}
\newcommand{\qIdiv}[2]{\Qinterpsymbol_{\DIV,#2}^{#1}}
\newcommand{\qIgen}[2]{\Qinterpsymbol_{\gensymbol,#2}^{#1}}
\newcommand{\uG}[2]{\uvec{G}_{#2}^{#1}}
\newcommand{\uC}[2]{\uvec{C}_{#2}^{#1}}
\newcommand{\D}[2]{D_{#2}^{#1}}
\newcommand{\term}{\mathfrak T}
\newcommand{\Rcurl}{\bvec{R}_{\CURL}}
\newcommand{\Rgrad}{R_{\GRAD}}
\newcommand{\Liftcurl}{\bvec{\mathcal L}_{\CURL,h}}
\newcommand{\tr}[2]{\gamma_{#2}^{#1}}
\newcommand{\trt}[2]{\bvec{\gamma}_{\mathrm{t},#2}^{#1}}
\newcommand{\Pgrad}[2]{P_{\GRAD,#2}^{#1}}
\newcommand{\Pcurl}[2]{\bvec{P}_{\CURL,#2}^{#1}}
\newcommand{\Pdiv}[2]{\bvec{P}_{\DIV,#2}^{#1}}
\newcommand{\Pgen}[2]{P_{\gensymbol,#2}^{#1_\gensymbol}}
\newcommand{\normal}{\bvec{n}}
\newcommand{\tangent}{\bvec{t}}
\newcommand{\letterPoly}{\mathcal{P}}
\newcommand{\Poly}[1]{\letterPoly^{#1}}
\newcommand{\vPoly}[1]{\boldsymbol{\letterPoly}^{#1}}
\newcommand{\norm}[2]{\|#2\|_{#1}}
\newcommand{\seminorm}[2]{|#2|_{#1}}
\newcommand{\vvvert}{\vert\kern-0.25ex\vert\kern-0.25ex\vert}
\newcommand{\tnorm}[2]{\vvvert #2\vvvert_{#1}}
\newcommand{\Appr}[2]{\mathsf{A}^{#1}_{#2}}
\newcommand{\Bppr}[2]{\mathsf{B}^{#1}_{#2}}
\newcommand{\Tppr}[2]{\mathsf{T}^{#1}_{#2,\Gamma^\compl}}
\DeclareMathOperator{\Image}{Im}
\newcommand{\Mh}{\mathfrak{M}_h}
\newcommand{\Th}{\mathcal{T}_h}
\newcommand{\Fh}{\mathcal{F}_h}
\newcommand{\Eh}{\mathcal{E}_h}
\newcommand{\Vh}{\mathcal{V}_h}
\newcommand{\sfP}{\mathsf{P}}
\newcommand{\Fhb}{\mathcal{F}_h^{\partial\Omega}}
\newcommand{\FGc}{\mathcal{F}_h^{\Gamma^\compl}}
\newcommand{\MSh}{\mathfrak{S}_h}
\newcommand{\Sh}{\mathcal{S}_h}
\newcommand{\patch}[1]{\widehat{#1}}
\newcommand{\ppatch}[1]{\widetilde{#1}}
\newcommand{\faces}[1]{\mathcal{F}_{#1}}
\newcommand{\edges}[1]{\mathcal{E}_{#1}}
\newcommand{\vertices}[1]{\mathcal{V}_{#1}}
\newcommand{\FT}{\faces{T}}
\newcommand{\ET}{\edges{T}}
\newcommand{\EF}{\edges{F}}
\newcommand{\VT}{\vertices{T}}
\newcommand{\VF}{\vertices{F}}
\newcommand{\VE}{\vertices{E}}
\newcommand{\Id}{\mathrm{Id}}
\begin{document}

\title{Commuting quasi-interpolators and Maxwell compactness for a polytopal de Rham complex}

\author[1]{Théophile Chaumont-Frelet}
\author[2,3]{J\'er\^ome Droniou}
\author[1]{Simon Lemaire}
\affil[1]{Inria, Univ.~Lille, CNRS, UMR 8524 -- Laboratoire Paul Painlev\'e, 59000 Lille, France, \email{theophile.chaumont@inria.fr}, \email{simon.lemaire@inria.fr}}
\affil[2]{IMAG, CNRS, Montpellier, France, \email{jerome.droniou@cnrs.fr}}
\affil[3]{School of Mathematics, Monash University, Melbourne, Australia}
\maketitle

\begin{abstract}
  We establish Maxwell compactness results for the Discrete De Rham (DDR) polytopal complex: sequences in this polytopal complex with bounded discrete $\bvec{H}(\CURL)$ (resp.~discrete $\bvec{H}(\DIV)$) norm and orthogonal to discrete gradients (resp.~discrete curls) have $L^2$-relatively compact potential reconstructions. The proof of these results hinges on the design of novel quasi-interpolators, that map the minimal-regularity de Rham spaces onto the discrete DDR spaces and form a commuting diagram. A full set of (primal and adjoint) consistency properties is established for these quasi-interpolators, which paves the way to convergence proofs, under minimal-regularity assumptions, of DDR schemes for partial differential equations based on the de Rham complex. Our analysis is performed with generic mixed boundary conditions, also covering the cases of no boundary conditions or fully homogeneous boundary conditions, and leverages recently introduced liftings from the DDR complex to the continuous de Rham complex.
  \medskip\\
  \textbf{Key words:} Maxwell compactness, polytopal methods, discrete de Rham complex, quasi-interpolators, minimal-regularity.
  \medskip\\
  \textbf{MSC2020.} 65N30, 65N12, 14F40, 35Q61.

\end{abstract}


\section{Introduction}\label{sec:introduction}

This paper brings two major contributions to the theory of polytopal de Rham complexes (arbitrary-order discretisations of the de Rham complex on meshes made of generic polygons/polyhedra). We establish the first Maxwell compactness results for such a discrete complex, and we design the first commuting quasi-interpolators between the minimal-regularity continuous complex and a polytopal complex. We focus our presentation on the Discrete De Rham (DDR) complex \cite{Di-Pietro.Droniou:23}, but our approach does not directly rely on the particulars of this complex and is therefore highly likely applicable to other polytopal complexes.

Compactness is an essential tool in the analysis of partial differential equations (PDEs), especially for nonlinear models or eigenvalue problems. Consequently, it is also critical to the analysis of numerical schemes for PDEs. Discrete versions of the celebrated Rellich--Kondrachov theorem have been known for a long time (see, e.g., \cite{Eymard.Gallouet.ea:00,gal-12-com,Li.ea:15,Droniou.Eymard.ea:18}), but they cover equations whose main operator is the gradient. On the contrary, the driving operators in electromagnetism models are the curl and divergence, for which compactness results -- dubbed Maxwell compactness -- are more challenging to obtain and often require the use of Hodge decompositions and suitable vector potentials; see for example \cite{Weck:74,Weber:80,Jochmann:97,Amrouche.Bernardi.ea:98,bauer2016maxwell,Assous.Ciarlet.ea:18} in the continuous case. Adaptations of Maxwell compactness have been investigated in the discrete setting, see for example \cite{Kikuchi:89,Monk:92,Christiansen:05} for finite element methods.
An adaptation to a non-conforming polytopal method has also been considered in \cite{Lemaire.Pitassi:25} (covering generic boundary conditions) but, contrary to the aforementioned finite element methods or to the polytopal method under study in the present contribution, the hybrid spaces and operators in the latter reference do not (at least presently) form a discrete complex, that is, a compatible discretisation of the de Rham complex
\begin{equation} \label{eq:derham.complex}
  \begin{tikzcd}[column sep=2.5em]
    & 0 \arrow{r}{}
    & \Hgrad{\Omega}\arrow{r}{\GRAD}
    & \Hcurl{\Omega}\arrow{r}{\CURL}
    & \Hdiv{\Omega}\arrow{r}{\DIV}
    &L^2(\Omega)\arrow{r}{}
    & 0
  \end{tikzcd}
\end{equation}
(see Section \ref{ssse:sob} for a precise definition of these classical minimal-regularity Sobolev spaces). Compatible discretisations of this complex are essential to preserve certain properties (e.g., constraints) of PDE models, and/or to obtain discretisations that are robust with respect to certain physical parameters \cite{Pietro.Droniou:22,Beirao-da-Veiga.Dassi.ea:22}. Such discretisations (on standard meshes) include the Lagrange--Nédélec--Raviart-Thomas finite elements, see \cite{Arnold:18} for a generic analysis. More recently, there has been a flourishing activity around the development and analysis of polytopal complexes, that is, discretisations of the de Rham (or other) complex that, contrary to finite element methods, are applicable on meshes made of generic polygonal/polyhedral elements; a non-exhaustive list includes \cite{Beirao-da-Veiga.Brezzi.ea:16,Beirao-da-Veiga.Brezzi.ea:18*1,Beirao-da-Veiga.Brezzi.ea:18*2,Di-Pietro.Droniou.ea:20,Beirao-da-Veiga.Dassi.ea:22}. However, there did not seem to exist, so far, any discrete Maxwell compactness results for such complexes.

\medskip

Interpolation operators play a central role in the numerical analysis of finite elements. The related interpolation errors quantify how well a function with given regularity may be represented in the finite element space. When discretizing coercive linear PDEs with a single unknown, such interpolation errors can be linked together with the discretization error of the scheme through quasi-optimality results, thus showing convergence of the discrete solution, often with optimal rates.
PDEs with multiple unknowns -- such as the Maxwell equations -- are often approximated using mixed finite element methods, whereby each variable is discretized within its own finite element space. The underlying complex structure (e.g., de Rham) is key in the analysis (discrete or not) of such models. To comply with this structure, interpolation operators typically need to satisfy commuting properties, whereby the interpolation operators of the finite element spaces must suitably interact with the corresponding differential operators.

For standard finite element spaces, canonical interpolation operators already satisfy the expected commutation properties, and can often be readily used for the analysis. However, these canonical operators come with an important shortcoming: they are only well-defined for sufficiently smooth functions. For instance, the Lagrange interpolator requires nodal values, which are not well-defined for functions in the minimal-regularity space $\Hgrad{\Omega}$ as soon as the ambient dimension is greater than one. As a by-product, unnecessary regularity assumptions are often required in analyses based on these canonical interpolators. This shortcoming has motivated the development of so-called quasi-interpolation operators, that are able to operate on functions with minimal smoothness requirements.

The first key seminal contributions on the development of quasi-interpolation operators for Lagrange finite elements are due to Cl\'ement~\cite{clement_1975a} and Scott--Zhang~\cite{scott_zhang_1990a}. These early contributions introduce interpolation operators defined for very rough functions, but lacking the commutation properties needed to handle discretisations of the de Rham complex. There has been, since then, a considerable effort in designing suitable commuting quasi-interpolation operators under minimal-regularity assumptions for finite element complexes on simplicial meshes; see, e.g., \cite{schoberl_2001a,Falk_Winth_loc_coch_14,ern_guermond_2016a,Arn_Guz_loc_stab_L2_com_proj_21,ern_gudi_smears_vohralik_2022a,Chaumont-Frelet.Vohralik:24,Chaumont-Frelet.Vohralik:25} and references therein.
Such commuting quasi-interpolators have been instrumental in establishing key results in the numerical analysis of finite element methods, such as discrete Poincar\'e inequalities~\cite{Arnold.Falk.ea:06,ern_guzman_potu_vohralik_2024a}, convergence of multigrid algorithms~\cite{Arn_Falk_Wint_MG_H_div_H_curl_00,schoberl_2001a}, localized orthogonal decomposition~\cite{maalqvist_peterseim_2014a,gallistl_henning_verfurth_2018a}, or duality analysis for time-harmonic Maxwell's equations~\cite{chaumontfrelet_2019a,chaumontfrelet_nicaise_pardo_2018a,ern_guermond_2018a}. To the best of our knowledge, however, no commuting quasi-interpolators had been designed, so far, for polytopal complexes.

\medskip

In this work, we consider the Discrete De Rham (DDR) polytopal complex \cite{Di-Pietro.Droniou.ea:20,Di-Pietro.Droniou:23}. We design commuting quasi-interpolators for this complex, and we prove that it satisfies a discrete version of the Maxwell compactness result. As for finite elements, the canonical DDR interpolators do commute with the corresponding differential operators, but they can only be applied to functions with higher regularity than those in the complex \eqref{eq:derham.complex}. The quasi-interpolators allow us to establish improved primal and adjoint consistency properties of the DDR complex, that operate under minimal-regularity assumptions -- see Theorems~\ref{thm:qI.primal.consistency} and~\ref{thm:qI.adjoint.consistency} below. Such properties are, in turn, instrumental to error analyses of DDR schemes under minimal-regularity assumptions on the solutions (for models which include, for example, rough coefficients).

Additionally -- and, perhaps, more importantly -- these quasi-interpolators are instrumental in the proof of the Maxwell compactness of the DDR complex. To establish this compactness, we indeed use three features of the quasi-interpolators: their applicability to minimal-regularity functions, their consistency properties, and their particular design making them left-inverse of the recently introduced conforming lifting operators of \cite{Di-Pietro.Droniou.ea:25} -- another key element in the proof of the compactness result. 

There is a strong link between the lowest-order DDR complex and Compatible Discrete Operators \cite{Bonelle.Ern:14,Bonelle:14}. As a consequence, our results directly apply to this framework. Moreover, our approach to the construction of quasi-interpolators and to the proof of the discrete Maxwell compactness theorem does not directly rely on the specific design of the Discrete De Rham complex, and is therefore likely to apply to other polytopal complexes such as Virtual Element complexes (upon extending the liftings of \cite{Di-Pietro.Droniou.ea:25} to these methods, which is also probably feasible).

\medskip

The paper is organised as follows. In Section \ref{sec:ddr} we recall some notations (mesh, functional spaces) and briefly present the main elements of the DDR complex, without providing the detailed formulas (not useful to our purpose) for the construction of the discrete operators. Section \ref{sec:compactness} states the main Maxwell compactness results, as well as the more standard Rellich compactness (for the first space in the DDR sequence). The proof of the Maxwell compactness is postponed to Section \ref{sec:proof.curl.compactness}, since it requires the quasi-interpolators that are introduced in Section \ref{sec:quasi-interpolator}, in which we also state the main properties of these interpolators: commutation with differential operators, primal and adjoint consistencies. The proofs of these properties are collected in Section \ref{sec:analysis.qI}. Finally, a brief appendix presents a discrete trace estimate for the DDR version of $\Hgrad{\Omega}$, which is useful to simplify one of the adjoint consistency estimates of the quasi-interpolators.

The entire analysis, including the design and properties of the quasi-interpolators and the proof of the Maxwell compactness, is performed considering generic mixed boundary conditions, which cover as particular cases the situation of fully homogeneous boundary conditions or no boundary conditions (corresponding to natural boundary conditions in practice).

\section{The Discrete De Rham complex}\label{sec:ddr}

\subsection{Mesh}\label{sec:mesh}

We assume that $\Omega$ is a Lipschitz polyhedral open set in $\Real^3$. We consider a polytopal mesh $\Mh$ of $\Omega$ that is part of a regular mesh sequence as per \cite[Definition 1.9]{Di-Pietro.Droniou:20}, and use the orientation notations (briefly recalled hereafter) as in \cite{Di-Pietro.Droniou:23}.
The sets of elements, faces, edges and vertices are respectively denoted by $\Th$, $\Fh$, $\Eh$ and $\Vh$; the generic notation for an element is $T$, for a face $F$, for an edge $E$, and for a vertex $V$. The set of faces $F\subset \partial\Omega$ is denoted by $\Fhb$.
Each $\sfP\in\Th\cup\Fh\cup\Eh$ is assumed to be relatively open in the affine space it spans, and we denote by $h_\sfP$ its diameter; the mesh size is $h\coloneq \max_{T\in\Th}h_T$. For each such $\sfP$, we select $\bvec{x}_\sfP$ such that, in the space it spans, $\sfP$ contains a ball centered at $\bvec{x}_\sfP$ and of radius $\ge c h_{\sfP}$, where $c\in(0,1)$ only depends on the mesh regularity factor. For $V\in\Vh$, $\bvec{x}_V$ denotes the coordinates vector of $V$.

If $T\in\Th$, we let $\FT$ (resp.\ $\ET$, resp.\ $\VT$) be the set of faces (resp.\ edges, resp.\ vertices) belonging to $T$; the notations $\EF$, $\VF$, $\VE$ are defined similarly.
Each $F\in\Fh$ is endowed with a unit normal vector $\normal_F$ that determines its orientation, and if $F\in\FT$ we denote by $\omega_{TF}\in\{-1,1\}$ the relative orientation of $F$ with respect to $T$ -- that is, such that $\omega_{TF}\normal_F$ is pointing out of $T$.
Similarly, each $E\in\Eh$ is equipped with a unit tangent vector $\tangent_E$ and, if $E\in\EF$, $\omega_{FE}\in\{-1,1\}$ is the relative orientation of $E$ with respect to $F$ -- such that $\omega_{FE}\tangent_E$ points in the clockwise direction on $\partial F$ (as determined by the orientation of $F$).

We consider a relatively open Lipschitz subset $\Gamma$ of $\partial\Omega$ on which essential boundary conditions are applied for the continuous and discrete spaces.
We assume that the mesh is compliant with $\Gamma$ in the sense that each face of $\Fh$ is either fully contained in $\Gamma$, or does not intersect $\Gamma$. This ensures that $\Gamma$ is the union of certain (boundary) mesh faces. For future use, we also let $\Gamma^\compl\coloneq\partial\Omega\backslash\overline{\Gamma}$ denote the (interior of the) complement of $\Gamma$ in $\partial\Omega$.

The mesh regularity assumption ensures the existence of a \emph{matching simplicial submesh} $\MSh$ of $\Mh$ \cite[Definition 1.7]{Di-Pietro.Droniou:20},
whose shape regularity factor is bounded below by the regularity factor of $\Mh$. $\Sh$ denotes the set of (3-dimensional) simplices in $\MSh$ and we will use the letter $S$ to denote such a generic simplex.
For all $T\in\Th$, the set of simplices of $\Sh$ contained in $T$ is denoted by $\Sh(T)$; by mesh regularity, the cardinal of $\Sh(T)$ is uniformly bounded above, and the diameter of each $S \in \Sh(T)$ is bounded below by $c h_T$, where $c\in(0,1)$ only depends on the mesh regularity factor.

The last mesh concept we will require is that of \emph{polytopal patch} around $T\in\Th$, defined by
\begin{equation}\label{eq:def.ppatch}
\ppatch{T}=\{T'\in\Th\,:\,\overline{T}\cap\overline{T'}\not=\emptyset\}.
\end{equation}
We will, from time to time, identify patches with the interior of the closure of the domain $\cup_{T'\in\ppatch{T}}T'$ they cover. So, for example, we write $L^2(\ppatch{T})$ for $L^2\big(\mathrm{int}(\overline{\cup_{T'\in\ppatch{T}}T'})\big)$. The same abuse of notation is made in norms, or for Sobolev spaces.

\subsection{Functional spaces}\label{sec:functional.spaces}

To avoid the proliferation of constants, we write $a\lesssim b$ for $a\le Cb$ with $C$ not depending on $h$, $a$, $b$ or the mesh element considered, but possibly depending on $\Omega$, $\Gamma$, the mesh regularity factor, the polynomial degrees involved in $a$ or $b$, and possibly additional quantities  that will be specified when required (such additional quantities could be physical parameters; see, in particular, Section \ref{sec:ddr.norms}). The notation $a \eqsim b$ stands for $a\lesssim b$ and $b\lesssim a$. 

\subsubsection{Sobolev spaces} \label{ssse:sob}

For $\omega$ an open set (e.g., $\omega=\Omega$ or $\omega=T\in\Th$), the Sobolev spaces associated with $\GRAD$, $\CURL$ and $\DIV$ are
\begin{alignat*}{2}
    \Hgrad{\omega}&\coloneq \big\{q\in L^2(\omega)\,:\,\GRAD q\in L^2(\omega)^3\big\},\\
    \Hcurl{\omega}&\coloneq \big\{\bvec{v}\in L^2(\omega)^3\,:\,\CURL\bvec{v}\in L^2(\omega)^3\big\},\\
    \Hdiv{\omega}&\coloneq \big\{\bvec{w}\in L^2(\omega)^3\,:\,\DIV\bvec{w}\in L^2(\omega)\big\}.
\end{alignat*}
If $\Sigma$ is a relatively open Lipschitz subset of $\partial\omega$ (in practice, $\Sigma=\Gamma\cap\partial\omega$ or $\Sigma=\Gamma^\compl\cap\partial\omega$), the subspaces with zero relevant trace on $\Sigma$ are denoted by an index:
\begin{equation}\label{eq:def.Hspaces}
\begin{aligned}
   \HgradS{\omega}&\coloneq \big\{q\in \Hgrad{\omega}\,:\,q|_\Sigma=0\big\},\\
    \HcurlS{\omega}&\coloneq \big\{\bvec{v}\in \Hcurl{\omega}\,:\,\bvec{v}_{{\rm t},\Sigma}=\bvec{0}\big\},\\
    \HdivS{\omega}&\coloneq \big\{\bvec{w}\in \Hdiv{\omega}\,:\, w_{{\rm n},\Sigma}=0\big\}.
\end{aligned}
\end{equation}
Here and in the following, if $\bvec{\xi}$ is a (sufficiently smooth) vector field and $X$ is a two-dimensional set with unit normal $\normal_X$ (determined by the local context -- e.g., for a mesh face $F$ this would be the fixed normal $\normal_F$, while for $\Gamma$ it is the outer normal to $\Omega$), the tangential and normal traces of $\bvec{\xi}$ are denoted by
\[
\bvec{\xi}_{{\rm t},X}\coloneq \normal_X\times (\bvec{\xi}|_X\times\normal_X)\,,\qquad\xi_{{\rm n},X}\coloneq \bvec{\xi}|_X\cdot\normal_X.
\]
For vector fields with minimal-regularity in~\eqref{eq:def.Hspaces}, traces on part of the boundary are rigorously defined, e.g., in~\cite{fernandes_gilardi_1997a}.

To unify some expressions, we will often use the notation $\Hgen{\omega}$, or its variant $\HgenS{\omega}$, with $\gensymbol$ being any differential operator $\GRAD$, $\CURL$ or $\DIV$. The use of this generic notation to handle different spaces and operators is done at the expense, in some places, of minor abuses of notation (for example, scalar- and vector-valued spaces or operators are no longer distinguished using boldface).

For $X$ a measurable set, the norms on $L^2(X)$ and its vector-valued version $L^2(X)^3$ are irrespectively denoted by $\norm{L^2(X)}{{\cdot}}$.

\subsubsection{Local polynomial and finite element spaces}

For $\ell\ge 0$, the space of polynomial functions of total degree $\le \ell$ on $\sfP\in \Th\cup\Fh\cup\Eh\cup\Sh$ is denoted by $\Poly{\ell}(\sfP)$, with the convention
that $\Poly{-1}(\sfP)=\{0\}$. We use the boldface notation $\vPoly{\ell}(\sfP)$ for vector-valued polynomials $\sfP\to\Real^3$ and, when $\sfP\in\Fh$ is a mesh face, we restrict these vector-valued polynomials to the ones that are tangent to $\sfP$.
The $L^2(\sfP)$-orthogonal projector onto $\Poly{\ell}(\sfP)$, or its vector-valued version, is denoted by $\lproj{\ell}{\sfP}$. We will also write $\Poly{\ell}(\Th)$ (resp.\ $\Poly{\ell}(\Sh)$) for the broken polynomial space on the polytopal mesh $\Th$ (resp.\ the simplicial submesh $\Sh$). The projector on $\Poly{\ell}(\Th)$, obtained patching the local projectors $(\lproj{\ell}{T})_{T\in\Th}$, is denoted by $\lproj{\ell}{\Th}$.

\medskip

Still considering $\ell\ge 0$, the local Lagrange, Nédélec and Raviart--Thomas spaces on a polytopal element or simplex $T\in\Th\cup\Sh$, or a face $F\in\Fh$, are respectively denoted by
\begin{align*}
\Lag{\ell}{T}&\coloneq \Poly{\ell}(T),\\
\NE{\ell}{T}&\coloneq \vPoly{\ell-1}(T) + (\bvec{x}-\bvec{x}_T)\times \vPoly{\ell-1}(T),\\
\RT{\ell}{\sfP}&\coloneq \vPoly{\ell-1}(\sfP) + (\bvec{x}-\bvec{x}_\sfP) \Poly{\ell-1}(\sfP)\quad\text{ for $\sfP=T,F$}.
\end{align*}
We note that, for $\ell=0$, the last two spaces are reduced to $\{\bvec{0}\}$.
Whenever useful, for $\gensymbol\in\{\Lagsymbol,\NEsymbol,\RTsymbol\}$, these spaces are irrespectively denoted by $\FEgen{\ell}{\sfP}$.
The (scalar or vector) $L^2(\sfP)$-orthogonal projector onto $\FEgen{\ell}{\sfP}$ is denoted by $\genlproj{\ell}{\sfP}$.

\medskip

Taking $\gensymbol\in\{\Lagsymbol,\NEsymbol,\RTsymbol\}$, the global conforming finite element space on the simplicial mesh $\Sh$ associated with $\gensymbol$, and its version with homogeneous boundary conditions on $\Gamma$, are respectively denoted by
\begin{align*}
\FEgen{\ell}{\Sh}&\coloneq\{u\in\Hgen{\Omega}\,:\,u|_{S}\in\FEgen{\ell}{S}\quad\forall S\in\Sh\},\\
\FEgenG{\ell}{\Sh}&\coloneq \FEgen{\ell}{\Sh}\cap \HgenG{\Omega}.
\end{align*}
We will also use the notation $\FEgen{\ell}{\Sh(T)}$ (resp.~$\FEgenG{\ell}{\Sh(T)}$) for the restriction to $\Sh(T)$ of $\FEgen{\ell}{\Sh}$ (resp.~$\FEgenG{\ell}{\Sh}$).

\subsection{The DDR complex}

We give here a brief presentation of the main elements of the Discrete De Rham complex, sometimes omitting precise formulas that are not required for an overall vision of the complex; wherever necessary, we will recall the relevant properties of the various DDR operators.
We refer the reader to \cite{Di-Pietro.Droniou:23} for all the details, as well as \cite{Bonaldi.Di-Pietro.ea:25} for a synthetic presentation of all the spaces and operators, in any dimension, using the exterior calculus framework; we elected to not adopt the latter presentation to make our exposition accessible to readers without knowledge in exterior calculus.

Let an integer $k\ge 0$ be given. The Discrete De Rham complex reads
\begin{equation}\label{eq:ddr.complex}
  \begin{tikzcd}[column sep=2.5em]
    & 0 \arrow{r}{}
    & \Xgrad{k}{\Th}\arrow{r}{\uG{k}{h}}
    & \Xcurl{k}{\Th}\arrow{r}{\uC{k}{h}}
    & \Xdiv{k}{\Th}\arrow{r}{\D{k}{h}}
    &\Poly{k}(\Th)\arrow{r}{}
    & 0
  \end{tikzcd}
\end{equation}
with discrete spaces defined by
\begin{equation*} 
\begin{aligned}
\Xgrad{k}{\Th}\coloneq \big\{\dof{q}{h}={}&\big((q_T)_{T\in\Th},(q_F)_{F\in\Fh},(q_E)_{E\in\Eh},(q_V)_{V\in\Vh}\big),\\
&q_T\in\Poly{k-1}(T)\quad \forall T\in\Th\,,\qquad
q_F\in\Poly{k-1}(F)\quad \forall F\in\Fh\,,\\
&q_E\in\Poly{k-1}(E)\quad \forall E\in\Eh\,,\qquad
q_V\in\Real\quad \forall V\in\Vh
\big\},
\end{aligned}
\end{equation*}
\begin{equation*} 
\begin{aligned}
\Xcurl{k}{\Th}\coloneq \big\{\vdof{v}{h}={}&\big((\bvec{v}_T)_{T\in\Th},(\bvec{v}_F)_{F\in\Fh},(v_E)_{E\in\Eh}\big),\\
&\bvec{v}_T\in\RT{k}{T}\quad \forall T\in\Th\,,\qquad
\bvec{v}_F\in\RT{k}{F}\quad \forall F\in\Fh\,,\\
&v_E\in\Poly{k}(E)\quad \forall E\in\Eh
\big\},
\end{aligned}
\end{equation*}
and
\begin{equation*} 
\begin{aligned}
\Xdiv{k}{\Th}\coloneq \big\{\vdof{w}{h}={}&\big((\bvec{w}_T)_{T\in\Th},(w_F)_{F\in\Fh}\big),\\
&\bvec{w}_T\in\NE{k}{T}\quad \forall T\in\Th\,,\qquad
w_F\in\Poly{k}(F)\quad \forall F\in\Fh
\big\}.
\end{aligned}
\end{equation*}
The discrete differential operators $\uG{k}{h}$, $\uC{k}{h}$ and $\D{k}{h}$ are defined from the unknowns in these spaces by mimicking the integration-by-parts formulas involving $\GRAD$, $\CURL$ and $\DIV$ \cite[Section 3.3]{Di-Pietro.Droniou:23}. Their specific precise formulas will not be useful to our analysis; we will only recall their useful properties when needed.

The (standard) interpolators on the DDR spaces are obtained by applying appropriate $L^2$-orthogonal projectors onto the component polynomial spaces:
\begin{equation}\label{eq:interpolators}
\begin{aligned}
\Igrad{k}{h}q&\coloneq \big((\lproj{k-1}{T}q)_{T\in\Th}, (\lproj{k-1}{F}q)_{F\in\Fh}, (\lproj{k-1}{E}q)_{E\in\Eh}, (q(\bvec{x}_V))_{V\in\Vh}\big) &&\quad\forall q\in C^0(\overline{\Omega}),\\
\Icurl{k}{h}\bvec{v}&\coloneq \big((\RTproj{k}{T}\bvec{v})_{T\in\Th}, (\RTproj{k}{F}\bvec{v}_{\mathrm{t},F})_{F\in\Fh}, (\lproj{k}{E}(\bvec{v}\cdot\tangent_E))_{E\in\Eh}\big) &&\quad\forall \bvec{v}\in C^0(\overline{\Omega})^3,\\
\Idiv{k}{h}\bvec{w}&\coloneq \big((\NEproj{k}{T}\bvec{w})_{T\in\Th}, (\lproj{k}{F}w_{{\rm n},F})_{F\in\Fh}\big) &&\quad\forall \bvec{w}\in C^0(\overline{\Omega})^3.
\end{aligned}
\end{equation}
These interpolators commute with the differential operators $\GRAD$, $\CURL$ and $\DIV$ but require, to be well-defined, a certain level of continuity of their argument. We wrote \eqref{eq:interpolators} with continuous functions, but weaker regularity assumptions can be considered; specifically, each interpolator can be applied to functions with suitable single-valued $L^1$-traces on specific mesh entities (vertices, edges, faces, or elements). However, even these weaker assumptions are beyond the basic regularity of the underlying energy spaces $\Hgrad{\Omega}$, $\Hcurl{\Omega}$ or $\Hdiv{\Omega}$. The quasi-interpolators defined in Section \ref{sec:quasi-interpolator} will remedy this restriction and provide interpolators defined on these energy spaces, and that commute with the differential operators.

\medskip

To treat all the DDR spaces at once, we will often use the notation $\Xgen{k}{\Th}$ with $\gensymbol$ standing for $\GRAD$, $\CURL$ or $\DIV$.
The restriction to $\sfP\in\Th\cup\Fh\cup \Eh$ of a vector $\dof{z}{h}\in\Xgen{k}{\Th}$ (that is, the vector gathering the components of $\dof{z}{h}$ on the elements/faces/edges/vertices belonging to $\sfP$) is denoted by $\dof{z}{\sfP}$. Likewise, we write $\Xgen{k}{\sfP}$ for the space of such restrictions.
Similar local notations apply to the interpolators and the discrete differential operators.

\medskip

Each DDR space above is equipped with the following potential reconstructions: for $E\in\Eh$, $F\in\Fh$ or $T\in\Th$,
\begin{alignat*}{3}
&\tr{k+1}{E}:\Xgrad{k}{E}\to \Poly{k+1}(E)\,,&&\qquad \tr{k+1}{F}:\Xgrad{k}{F}\to\Poly{k+1}(F)\,,&&\qquad\Pgrad{k+1}{T}:\Xgrad{k}{T}\to\Poly{k+1}(T),\\
&&&\qquad\trt{k}{F}:\Xcurl{k}{F}\to \vPoly{k}(F)\,,&&\qquad \Pcurl{k}{T}:\Xcurl{k}{T}\to\vPoly{k}(T),\\
&&&&&\qquad\Pdiv{k}{T}: \Xdiv{k}{T} \to\vPoly{k}(T).
\end{alignat*}
The role of these reconstructions is to provide piecewise (discontinuous) polynomials that play the role of discrete functions -- e.g.\ for defining consistent $L^2$-like inner products, see Section \ref{sec:ddr.norms} below. The potentials defined on the gradient space play the role of scalar traces (on edges, faces) or scalar potentials in elements, those on the curl space play the role of tangential traces on faces or vector potentials in the elements, and the reconstructions on the divergence space play the role of vector potentials in the elements.

To distinguish the degrees of the reconstructions in the gradient space and in the curl and divergence spaces, we set
\begin{equation}\label{eq:def.kgen}
k_{\GRAD}\coloneq k+1\,,\quad k_{\CURL}=k_{\DIV}=k.
\end{equation}
Consistently with the unified notation adopted so far, we then use $\Pgen{k}{T}$ to denote any of $\Pgrad{k+1}{T}$, $\Pcurl{k}{T}$ or $\Pdiv{k}{T}$. The notation $\Pgen{k}{h}$ refers to the piecewise polynomial reconstruction on $\Th$ obtained by patching together the local reconstructions:
\[
(\Pgen{k}{h}\dof{z}{h})|_T=\Pgen{k}{T}\dof{z}{T}\qquad\forall T\in\Th\,,\ \forall \dof{z}{h}\in\Xgen{k}{\Th}.
\]

The spaces with homogeneous boundary values on $\Gamma$ are denoted by an index, as for the Sobolev spaces in \eqref{eq:def.Hspaces} (for $\Sigma=\Gamma$):
\begin{equation}\label{eq:def.XDDR.D}
\begin{aligned}
\XgradG{k}{\Th}&\coloneq\big\{\dof{q}{h}\in\Xgrad{k}{\Th}\,:\,\text{$q_V=0$ for all $V\in\overline{\Gamma}$, 
$q_E=0$ for all $E\subset \overline{\Gamma}$,}\\
&\qquad\text{ and $q_F=0$ for all $F\subset\Gamma$}\big\},\\
\XcurlG{k}{\Th}&\coloneq\big\{\vdof{v}{h}\in\Xcurl{k}{\Th}\,:\,\text{$v_E=0$ for all $E\subset \overline{\Gamma}$ and $\bvec{v}_F= \bvec{0}$ for all $F\subset\Gamma$}\big\},\\
\XdivG{k}{\Th}&\coloneq\big\{\vdof{w}{h}\in\Xdiv{k}{\Th}\,:\,\text{$w_F=0$ for all $F\subset\Gamma$}\big\}.
\end{aligned}
\end{equation}
From the definitions in \cite{Di-Pietro.Droniou:23} of the discrete differential operators and potential reconstructions, and the compliance of the mesh with $\Gamma$, we easily see that:
\begin{itemize}
\item The spaces \eqref{eq:def.XDDR.D} form a subcomplex of \eqref{eq:ddr.complex} (the space $\Poly{k}(\Th)$ being left unchanged).
\item For all $E\subset \overline{\Gamma}$ and $F\subset\Gamma$, the traces $\tr{k+1}{E}$, $\tr{k+1}{F}$ and $\trt{k}{F}$ vanish when
applied to vectors in the relevant spaces \eqref{eq:def.XDDR.D}.
\item Restricted to $C^0(\overline{\Omega})\cap \HgradG{\Omega}$, $C^0(\overline{\Omega})^3\cap \HcurlG{\Omega}$ or $C^0(\overline{\Omega})^3\cap \HdivG{\Omega}$, the interpolators \eqref{eq:interpolators} have the spaces \eqref{eq:def.XDDR.D} as co-domains.
\end{itemize}

\subsection{Components and $L^2$-like norms}\label{sec:ddr.norms}

On each DDR space,we define a components norm and an $L^2$-like inner product.
The components norm of $\dof{z}{h}\in \Xgen{k}{\Th}$ is defined by
\begin{equation}\label{eq:def.component.norm}
\begin{aligned}
\tnorm{\gensymbol,h}{\dof{z}{h}}&\coloneq \left(\sum_{T\in\Th}\tnorm{\gensymbol,T}{\dof{z}{T}}^2\right)^{1/2}\\
\text{ with }
\tnorm{\gensymbol,T}{\dof{z}{T}}^2&\coloneq\norm{L^2(T)}{z_T}^2+ h_T\sum_{F\in\FT}\tnorm{\gensymbol,F}{\dof{z}{F}}^2,\\
\text{ and }
\tnorm{\gensymbol,F}{\dof{z}{F}}^2&\coloneq\norm{L^2(F)}{z_F}^2+ h_F\sum_{E\in\EF}\norm{L^2(E)}{z_E}^2+h_F^2\sum_{V\in\VF}|z_V|^2,
\end{aligned}
\end{equation}
where, of course, the sums are actually only taken on the components that appear in $\Xgen{k}{\Th}$ (so, for example, only faces and edges for $\Xcurl{k}{\Th}$). 

We introduce a function $\mu$, which may represent physical parameters of interest for models discretized by the DDR method (e.g.,  mobility for porous media flows, permittivity/permeability for electromagnetic equations, etc.). This parameter will be involved in the definition of the discrete $L^2$-like inner products, and thus in the statement of their properties.

\begin{assumption}[Physical parameter]\label{assum:physical.parameter}
Let $n_\gensymbol=1$ if $\gensymbol=\GRAD$ and $n_\gensymbol=3\times 3$ if $\gensymbol\in\{\CURL,\DIV\}$, and recall the definition \eqref{eq:def.kgen} of $k_\gensymbol$. The function
$\mu:\Omega\to \Real^{n_\gensymbol}$ is a piecewise-$C^{k_\gensymbol}$ function on $\Th$ that is symmetric, uniformly bounded and coercive.
From hereon, we assume that the hidden constants in $\lesssim$ (see Section \ref{sec:functional.spaces}) can also depend on the coercivity constant of $\mu$, as well as on the bounds on $\mu$ and its local derivatives up to order $k_\gensymbol$.
\end{assumption}

We will need to consider Sobolev spaces associated with such physical parameters. Re-using the notation from Section~\ref{ssse:sob}, for $\gensymbol\in\{\GRAD,\CURL,\DIV\}$, we set
\begin{equation*} 
\HgenSmu{\Omega}\coloneq\{z\in L^2(\Omega)\,:\,\mu z\in \HgenS{\Omega}\}\quad\text{and}\quad
\Hgenmu{\Omega} \coloneq \Hgenemptymu{\Omega}.
\end{equation*}

Assumption \ref{assum:physical.parameter} allows us to define the $\mu$-weighted $L^2$-like inner product of $\dof{z}{h},\dof{y}{h}\in\Xgen{k}{\Th}$ by
\begin{equation}\label{eq:def.L2.inner}
\begin{aligned}
(\dof{z}{h},\dof{y}{h})_{\mu,\gensymbol,h}&\coloneq \sum_{T\in\Th}(\dof{z}{T},\dof{y}{T})_{\mu,\gensymbol,T}\\
\text{ with }
(\dof{z}{T},\dof{y}{T})_{\mu,\gensymbol,T}&\coloneq \int_T \mu\Pgen{k}{T}\dof{z}{T}\cdot\Pgen{k}{T}\dof{y}{T} + \overline{\mu}_T s_{\gensymbol,T}(\dof{z}{T},\dof{y}{T}),
\end{aligned}
\end{equation}
where $\overline{\mu}_T>0$ is a scalar quantity that represents the magnitude of $\mu|_T$ (for example, the average of its scalar or matrix norm).
The norm associated with this inner product is
\begin{equation}\label{eq:def.L2.norm}
\begin{aligned}
\norm{\mu,\gensymbol,h}{\dof{z}{h}}&\coloneq\left(\sum_{T\in\Th}\norm{\mu,\gensymbol,T}{\dof{z}{T}}^2\right)^{1/2}\\
\text{ with }
\norm{\mu,\gensymbol,T}{\dof{z}{T}}^2&\coloneq\norm{L^2(T)}{\mu^{1/2}\Pgen{k}{T}\dof{z}{T}}^2 + \overline{\mu}_Ts_{\gensymbol,T}(\dof{z}{T},\dof{z}{T}).
\end{aligned}
\end{equation}
In the definitions above, $s_{\gensymbol,T}$ is a semi-definite positive bilinear form penalising the differences between the potential reconstruction in $T$ and those on the faces or edges of $T$; see, e.g., \cite[Section 4.4]{Di-Pietro.Droniou:23}.
The main properties of $s_{\gensymbol,T}$ are (i) its polynomial consistency (recall the definition \eqref{eq:def.kgen} of $k_\gensymbol$): 
\begin{equation}\label{eq:polynomial.consistency.sT}
s_{\gensymbol,T}(\Igen{k}{T}w,\dof{z}{T})=0\qquad\forall w\in\Poly{k_\gensymbol}(T)\,,\quad\forall \dof{z}{T}\in\Xgen{k}{T},
\end{equation}
and (ii) the norm equivalence property it ensures:
\begin{equation}\label{eq:coercivity.sT}
  \norm{\mu,\gensymbol,T}{\dof{z}{T}} \eqsim \tnorm{\gensymbol,T}{\dof{z}{T}}\qquad\forall \dof{z}{T}\in\Xgen{k}{T}.
\end{equation}
We note that, combining \eqref{eq:def.L2.norm} and \eqref{eq:coercivity.sT}, we have
\begin{equation}\label{eq:bound.Pgen.sTgen}
\norm{L^2(T)}{\Pgen{k}{T}\dof{z}{T}}+s_{\gensymbol,T}(\dof{z}{T},\dof{z}{T})^{1/2}\lesssim \tnorm{\gensymbol,T}{\dof{z}{T}}\qquad\forall \dof{z}{T}\in\Xgen{k}{T}.
\end{equation}
We will also need the global stabilisation bilinear form $s_{\gensymbol,h}$ on $\Xgen{k}{\Th}$, assembled from the local ones:
\[
s_{\gensymbol,h}(\dof{z}{h},\dof{y}{h})\coloneq \sum_{T\in\Th}s_{\gensymbol,T}(\dof{z}{T},\dof{y}{T})\qquad\forall\dof{z}{h},\dof{y}{h}\in\Xgen{k}{\Th}.
\]

\section{Compactness results for the DDR complex}\label{sec:compactness}

In the following, we assume that $(\Mh)_{h\in\mathcal H}$ is a family of regular polytopal meshes, with $0$ as unique accumulation point of $\mathcal H$.

The main results in this section are the first two theorems. Theorem \ref{thm:curl.compactness} is a discrete version of the following Maxwell compactness result (stated here, for simplicity, with $\mu\equiv 1$): if a sequence is bounded in $\HcurlG{\Omega}$ and has zero divergence and zero normal trace on $\Gamma^\compl$ (i.e., is orthogonal to gradients of functions in $\HgradG{\Omega}$), then it is relatively compact in $L^2(\Omega)^3$. The second result, Theorem \ref{thm:div.compactness}, is the pendant for $\HdivG{\Omega}$: a sequence bounded in this space and with zero curl and tangential trace on $\Gamma^\compl$ (i.e., that is orthogonal to curls of fields in $\HcurlG{\Omega}$) is relatively compact in $L^2(\Omega)^3$. The proofs of these results rely on the quasi-interpolators introduced in Section \ref{sec:quasi-interpolator}, as well as on well-designed conforming liftings for the DDR complex \cite{Di-Pietro.Droniou.ea:25}.

\begin{theorem}[Maxwell compactness (I)]\label{thm:curl.compactness}
Consider $\mu$ that satisfies Assumption \ref{assum:physical.parameter}.
Let $(\vdof{v}{h})_{h\in\mathcal H}$ be such that $\vdof{v}{h}\in\XcurlG{k}{\Th}$ for each $h\in\mathcal H$, and
\begin{alignat}{2}
\label{eq:compact.bound}
&\text{$\big(\tnorm{\CURL,h}{\vdof{v}{h}}+\tnorm{\DIV,h}{\uC{k}{h}\vdof{v}{h}}\big)_{h\in\mathcal H}$ is bounded},\\
\label{eq:compact.orth}
&(\vdof{v}{h},\uG{k}{h}\dof{q}{h})_{\mu,\CURL,h}=0\qquad\forall\dof{q}{h}\in\XgradG{k}{\Th}\,,\quad\forall h\in\mathcal H.
\end{alignat}
Then, there exists $\bvec{v}\in\HcurlG{\Omega}\cap\HdivGcmu{\Omega}$ such that $\DIV (\mu\bvec{v}) \equiv 0$ and, up to a subsequence as $h\to 0$,
$\Pcurl{k}{h}\vdof{v}{h}\to \bvec{v}$ in $L^2(\Omega)^3$ and $s_{\CURL,h}(\vdof{v}{h},\vdof{v}{h})\to 0$.
\end{theorem}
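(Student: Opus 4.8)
The plan is to deduce the statement from the continuous Maxwell compactness theorem, using the conforming liftings of \cite{Di-Pietro.Droniou.ea:25} and the commuting quasi-interpolators introduced in Section~\ref{sec:quasi-interpolator}. First, I would set $\bvec{w}_h\coloneq\Liftcurl\vdof{v}{h}$; by stability of the lifting and its commutation with the differential operators, $\bvec{w}_h\in\HcurlG{\Omega}$ and $\CURL\bvec{w}_h$ coincides with the image of $\uC{k}{h}\vdof{v}{h}$ under the companion lifting on $\Xdiv{k}{\Th}$, so that $\norm{L^2(\Omega)}{\bvec{w}_h}+\norm{L^2(\Omega)}{\CURL\bvec{w}_h}\lesssim\tnorm{\CURL,h}{\vdof{v}{h}}+\tnorm{\DIV,h}{\uC{k}{h}\vdof{v}{h}}$ is bounded by \eqref{eq:compact.bound}. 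I would then take the $\mu$-weighted Helmholtz decomposition $\bvec{w}_h=\GRAD\psi_h+\bvec{z}_h$, where $\psi_h\in\HgradG{\Omega}$ solves $\int_\Omega\mu\GRAD\psi_h\cdot\GRAD q=\int_\Omega\mu\bvec{w}_h\cdot\GRAD q$ for all $q\in\HgradG{\Omega}$ and $\bvec{z}_h\coloneq\bvec{w}_h-\GRAD\psi_h$. Then $\psi_h$ is bounded in $\HgradG{\Omega}$, while $\bvec{z}_h\in\HcurlG{\Omega}\cap\HdivGcmu{\Omega}$ satisfies $\DIV(\mu\bvec{z}_h)=0$ and is bounded in $\HcurlG{\Omega}$ (since $\CURL\bvec{z}_h=\CURL\bvec{w}_h$). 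The continuous Maxwell compactness theorem with mixed boundary conditions (see, e.g., \cite{Weck:74,Weber:80,Jochmann:97}; the $\mu$-weighted version follows from the uniform ellipticity of $\mu$) then yields a subsequence along which $\bvec{z}_h\to\bvec{z}$ in $L^2(\Omega)^3$, and passing to the limit in the defining relations gives $\bvec{z}\in\HcurlG{\Omega}\cap\HdivGcmu{\Omega}$ with $\DIV(\mu\bvec{z})\equiv 0$.

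Since the quasi-interpolators are left-inverse to the liftings and commute with the differentials ($\qIcurl{k}{h}\GRAD=\uG{k}{h}\qIgrad{k}{h}$), the discrete field decomposes \emph{exactly} as $\vdof{v}{h}=\dof{g}{h}+\dof{z}{h}'$ with $\dof{g}{h}\coloneq\uG{k}{h}\qIgrad{k}{h}\psi_h$ and $\dof{z}{h}'\coloneq\qIcurl{k}{h}\bvec{z}_h$, where $\qIgrad{k}{h}\psi_h\in\XgradG{k}{\Th}$ because the quasi-interpolators preserve the boundary conditions. Testing the orthogonality \eqref{eq:compact.orth} with $\qIgrad{k}{h}\psi_h$ gives $(\vdof{v}{h},\dof{g}{h})_{\mu,\CURL,h}=0$, hence $\norm{\mu,\CURL,h}{\dof{g}{h}}^2=-(\qIcurl{k}{h}\bvec{z}_h,\uG{k}{h}\qIgrad{k}{h}\psi_h)_{\mu,\CURL,h}$. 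The key step is to prove that this right-hand side tends to $0$. For this I would use an adjoint-consistency estimate for $\qIcurl{k}{h}$ tested against discrete gradients (part of Theorem~\ref{thm:qI.adjoint.consistency}), controlling $(\qIcurl{k}{h}\bvec{\xi},\uG{k}{h}\dof{q}{h})_{\mu,\CURL,h}$ by a factor vanishing with $h$, a bounded norm of $\dof{q}{h}$, and the quantities $\norm{L^2(\Omega)}{\bvec{\xi}}$, $\norm{L^2(\Omega)}{\DIV(\mu\bvec{\xi})}$ and the normal trace of $\mu\bvec{\xi}$ on $\Gamma^\compl$. Splitting $\bvec{z}_h=\bvec{z}+(\bvec{z}_h-\bvec{z})$: the fixed part $\bvec{z}$ has $\DIV(\mu\bvec{z})=0$ and zero normal $\mu$-trace on $\Gamma^\compl$, so (the discrete norm of $\qIgrad{k}{h}\psi_h$ being uniformly bounded) its contribution tends to $0$; the remainder has the same structure, hence contributes $\lesssim\norm{L^2(\Omega)}{\bvec{z}_h-\bvec{z}}\to 0$. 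Thus $\norm{\mu,\CURL,h}{\dof{g}{h}}\to0$, and by \eqref{eq:def.L2.norm} and the uniform ellipticity of $\mu$ this forces $\Pcurl{k}{h}\dof{g}{h}\to\bvec{0}$ in $L^2(\Omega)^3$ and $s_{\CURL,h}(\dof{g}{h},\dof{g}{h})\to0$.

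For the remaining term I would use the primal-consistency property of $\qIcurl{k}{h}$ (Theorem~\ref{thm:qI.primal.consistency}) applied to the fixed limit $\bvec{z}$, together with the ($L^2$-type) stability of the quasi-interpolator and \eqref{eq:bound.Pgen.sTgen} applied to $\bvec{z}_h-\bvec{z}$, to get $\Pcurl{k}{h}\dof{z}{h}'\to\bvec{z}$ in $L^2(\Omega)^3$ and $s_{\CURL,h}(\dof{z}{h}',\dof{z}{h}')\to0$. Adding the two pieces, $\Pcurl{k}{h}\vdof{v}{h}=\Pcurl{k}{h}\dof{g}{h}+\Pcurl{k}{h}\dof{z}{h}'\to\bvec{z}$ in $L^2(\Omega)^3$, and $s_{\CURL,h}(\vdof{v}{h},\vdof{v}{h})\le2\,s_{\CURL,h}(\dof{g}{h},\dof{g}{h})+2\,s_{\CURL,h}(\dof{z}{h}',\dof{z}{h}')\to0$. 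The theorem then holds with $\bvec{v}\coloneq\bvec{z}$.

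I expect the main difficulty to be the second paragraph, i.e.\ showing that the gradient part $\dof{g}{h}$ is asymptotically negligible. This step cannot be done with the canonical DDR interpolators: it requires simultaneously the applicability of $\qIcurl{k}{h}$ to the merely $\HcurlG{\Omega}$-regular field $\bvec{w}_h$ and its components, the left-inverse and commutation structure (which turns the continuous gradient part of $\bvec{w}_h$ into a \emph{discrete} gradient, annihilated by \eqref{eq:compact.orth}), and — crucially — that the adjoint consistency of the quasi-interpolators controls $(\qIcurl{k}{h}\bvec{\xi},\uG{k}{h}\dof{q}{h})_{\mu,\CURL,h}$ through the $\Hdivmu{\Omega}$-regularity of $\bvec{\xi}$ rather than its $\HcurlG{\Omega}$-regularity; this is precisely what makes it possible to pass to the limit with only the strong $L^2$-convergence of $\bvec{z}_h$ at hand, the curls of the $\bvec{z}_h$ being in general not convergent. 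The same $L^2$-type stability and consistency properties of the quasi-interpolators are what make the first and third paragraphs go through.
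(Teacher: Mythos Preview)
Your proof is correct and uses the same essential ingredients as the paper---the liftings of \cite{Di-Pietro.Droniou.ea:25} with the left-inverse property $\qIcurl{k}{h}\Liftcurl=\Id$ (Lemma~\ref{lem:lift.interp}), the commutation $\qIcurl{k}{h}\GRAD=\uG{k}{h}\qIgrad{k}{h}$, the primal and adjoint consistencies of the quasi-interpolators, and the continuous Maxwell compactness---but organises them differently. The paper first extracts the weak limit $\bvec{v}$ of $\Liftcurl\vdof{v}{h}$ in $\HcurlG{\Omega}$, establishes in a separate step (by testing \eqref{eq:compact.orth} against $\qIgrad{k}{h}q$ for arbitrary $q\in\HgradG{\Omega}$) that $\DIV(\mu\bvec{v})=0$ and $\mu\bvec{v}\cdot\normal_\Omega=0$ on $\Gamma^\compl$, and only then takes an \emph{unweighted} Hodge decomposition of the \emph{difference} $\bvec{v}-\Liftcurl\vdof{v}{h}$; thus the continuous compactness invoked is the plain $\mu\equiv 1$ version of \cite{Jochmann:97}. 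You instead decompose $\Liftcurl\vdof{v}{h}$ itself via a $\mu$-weighted Helmholtz decomposition, so that the constraints on the limit come for free from the structure of $\bvec{z}_h$, at the price of needing the $\mu$-weighted continuous compactness (also covered by \cite{Jochmann:97}, so not a real obstacle). Your route is a little shorter and merges the paper's Steps~1--2 into the decomposition; the paper's route keeps the continuous tool in its simplest form.

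One minor imprecision: in your treatment of the remainder $(\qIcurl{k}{h}(\bvec{z}_h-\bvec{z}),\dof{g}{h})_{\mu,\CURL,h}$, the adjoint consistency bound \eqref{eq:adjoint.consistency.grad} produces, via $\Appr{k}{\CURL,h}(\bvec{z}_h-\bvec{z})$, a term $h\norm{L^2(\Omega)}{\CURL(\bvec{z}_h-\bvec{z})}$ in addition to $\norm{L^2(\Omega)}{\bvec{z}_h-\bvec{z}}$; since the curls are uniformly bounded this extra term also tends to $0$, so the conclusion is unaffected.
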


\begin{proof}
See Section \ref{sec:proof.curl.compactness}.
\end{proof}

\begin{theorem}[Maxwell compactness (II)]\label{thm:div.compactness}
Consider $\mu$ that satisfies Assumption \ref{assum:physical.parameter}.
Let $(\vdof{w}{h})_{h\in\mathcal H}$ be such that $\vdof{w}{h}\in\XdivG{k}{\Th}$ for each $h\in\mathcal H$, and
\begin{alignat*}{2}
&\text{$\big(\tnorm{\DIV,h}{\vdof{w}{h}}+\norm{L^2(\Omega)}{\D{k}{h}\vdof{w}{h}}\big)_{h\in\mathcal H}$ is bounded},\\
&(\vdof{w}{h},\uC{k}{h}\vdof{v}{h})_{\mu,\DIV,h}=0\qquad\forall \vdof{v}{h} \in\XcurlG{k}{\Th}\,,\quad\forall h\in\mathcal H.
\end{alignat*}
Then, there exists $\bvec{w}\in\HdivG{\Omega}\cap\HcurlGcmu{\Omega}$ such that $\CURL (\mu\bvec{w}) \equiv\bvec{0}$ and, up to a subsequence as $h\to 0$,
$\Pdiv{k}{h}\vdof{w}{h}\to \bvec{w}$ in $L^2(\Omega)^3$ and $s_{\DIV,h}(\vdof{w}{h},\vdof{w}{h})\to 0$.
\end{theorem}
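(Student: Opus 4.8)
\textbf{Proof strategy for Theorem~\ref{thm:div.compactness}.}
The plan is to mirror, by exterior-calculus duality, the argument used for Theorem~\ref{thm:curl.compactness} in Section~\ref{sec:proof.curl.compactness}, working one slot further down the DDR complex \eqref{eq:ddr.complex}. The first step is to use the norm equivalence \eqref{eq:coercivity.sT} to turn the hypothesis that $(\tnorm{\DIV,h}{\vdof{w}{h}})_h$ is bounded into boundedness of $(\norm{\mu,\DIV,h}{\vdof{w}{h}})_h$, hence in particular of $(\norm{L^2(\Omega)}{\Pdiv{k}{h}\vdof{w}{h}})_h$ and of $(s_{\DIV,h}(\vdof{w}{h},\vdof{w}{h}))_h$. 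I would then feed $\vdof{w}{h}$ through the conforming lifting operator of \cite{Di-Pietro.Droniou.ea:25} associated with the divergence slot to obtain $\bvec{w}_h^\star\in\RTG{\,}{\Sh}\subset\HdivG{\Omega}$ whose DDR interpolant recovers $\vdof{w}{h}$ (since the quasi-interpolators of Section~\ref{sec:quasi-interpolator} are left inverses of these liftings), with $\norm{\Hdiv{\Omega}}{\bvec{w}_h^\star}$ controlled by $\tnorm{\DIV,h}{\vdof{w}{h}}+\norm{L^2(\Omega)}{\D{k}{h}\vdof{w}{h}}$ and $\norm{L^2(\Omega)}{\bvec{w}_h^\star-\Pdiv{k}{h}\vdof{w}{h}}\to 0$ (the ``lifting consistency'' estimate, which should also be among the properties recorded in Section~\ref{sec:quasi-interpolator} or derivable from it). The point of moving to the conforming space is that the continuous Maxwell compactness result for $\HdivG{\Omega}\cap\HcurlGcmu{\Omega}$ fields with vanishing $\CURL(\mu\,\cdot\,)$ is available off the shelf; what remains is to verify that the lifted sequence (asymptotically) satisfies those continuous constraints.

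The heart of the proof is therefore to translate the discrete orthogonality \eqref{eq} — orthogonality of $\vdof{w}{h}$ to all discrete curls $\uC{k}{h}\vdof{v}{h}$ — into an asymptotic continuous orthogonality of $\bvec{w}_h^\star$ (equivalently $\mu\bvec{w}_h^\star$) to continuous curls. Concretely, given $\bvec{\varphi}\in\HcurlG{\Omega}$ I want to show $\int_\Omega \mu\bvec{w}_h^\star\cdot\CURL\bvec{\varphi}\to 0$ up to the subsequence, so that the $L^2$-limit $\bvec{w}$ of $\bvec{w}_h^\star$ satisfies $\int_\Omega \mu\bvec{w}\cdot\CURL\bvec{\varphi}=0$ for all such $\bvec{\varphi}$, i.e.\ $\CURL(\mu\bvec{w})\equiv\bvec 0$ with vanishing tangential trace on $\Gamma^\compl$ in the weak sense, which is exactly $\mu\bvec{w}\in\HcurlGcmu{\Omega}$ with $\CURL(\mu\bvec{w})=\bvec 0$. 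To get there I would take $\vdof{v}{h}\coloneq\qIcurl{k}{h}\bvec{\varphi}$, the curl-space quasi-interpolant of $\bvec{\varphi}$ (well defined on $\HcurlG{\Omega}$, lying in $\XcurlG{k}{\Th}$, and commuting with the differentials so that $\uC{k}{h}\vdof{v}{h}=\qIdiv{k}{h}\CURL\bvec{\varphi}$). Plugging this into \eqref{eq} and expanding $(\vdof{w}{h},\uC{k}{h}\vdof{v}{h})_{\mu,\DIV,h}$ via \eqref{eq:def.L2.inner}, I split off the consistency error: the leading term is $\int_\Omega \mu\,\Pdiv{k}{h}\vdof{w}{h}\cdot\Pdiv{k}{h}(\qIdiv{k}{h}\CURL\bvec{\varphi})$, which by the primal consistency of the divergence quasi-interpolator (Theorem~\ref{thm:qI.primal.consistency}-type estimate, $\Pdiv{k}{h}\qIdiv{k}{h}\bvec{\psi}\to\bvec{\psi}$ in $L^2$ for $\bvec{\psi}\in\Hdiv{\Omega}$, here $\bvec{\psi}=\CURL\bvec\varphi$) converges to $\int_\Omega\mu\bvec{w}\cdot\CURL\bvec\varphi$; the remaining stabilisation term $\overline{\mu}_T s_{\DIV,T}(\vdof{w}{T},(\qIdiv{k}{h}\CURL\bvec\varphi)_T)$ is bounded by Cauchy--Schwarz using $s_{\DIV,h}(\vdof{w}{h},\vdof{w}{h})^{1/2}$ (bounded) times $s_{\DIV,h}(\qIdiv{k}{h}\CURL\bvec\varphi,\cdot)^{1/2}\to 0$, again an adjoint/primal consistency property of the quasi-interpolator. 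One then also needs, by density, to reduce to smooth $\bvec\varphi$ or argue directly with the minimal-regularity consistency estimates already proved for the quasi-interpolators — which is precisely where their design for rough arguments pays off.

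Finally, I would collect the conclusions: $\bvec{w}_h^\star\rightharpoonup\bvec w$ weakly in $\HdivG{\Omega}$ (bounded sequence) and, by the continuous Maxwell compactness theorem applied to the space $\HdivG{\Omega}\cap\HcurlGcmu{\Omega}$ of $\CURL(\mu\,\cdot\,)$-free fields, $\bvec{w}_h^\star\to\bvec w$ strongly in $L^2(\Omega)^3$ along a subsequence, with $\bvec w\in\HdivG{\Omega}\cap\HcurlGcmu{\Omega}$ and $\CURL(\mu\bvec w)\equiv\bvec 0$ by the previous paragraph; the lifting consistency $\norm{L^2(\Omega)}{\bvec{w}_h^\star-\Pdiv{k}{h}\vdof{w}{h}}\to 0$ then upgrades this to $\Pdiv{k}{h}\vdof{w}{h}\to\bvec w$ in $L^2(\Omega)^3$. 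For $s_{\DIV,h}(\vdof{w}{h},\vdof{w}{h})\to 0$, I would test \eqref{eq} (or rather the underlying near-minimisation/consistency structure) against a well-chosen $\vdof{v}{h}$ built from $\qIcurl{k}{h}$ applied to a continuous curl potential of $\mu\bvec w$, or more simply use that $\norm{\mu,\DIV,h}{\vdof{w}{h}-\qIdiv{k}{h}\bvec w}\to 0$ (by primal consistency of $\qIdiv{k}{h}$ at $\bvec w$ combined with the $L^2$ convergence just obtained) together with $s_{\DIV,h}(\qIdiv{k}{h}\bvec w,\qIdiv{k}{h}\bvec w)\to 0$ and the triangle inequality in $\norm{\mu,\DIV,h}{\cdot}$, then strip off the $L^2$-part via \eqref{eq:coercivity.sT}. \textbf{Main obstacle.} The delicate point is the passage from the discrete orthogonality to the continuous one under only minimal regularity on the test field $\bvec\varphi\in\HcurlG{\Omega}$: one must control the stabilisation cross-term and the potential-reconstruction consistency error for $\qIdiv{k}{h}\CURL\bvec\varphi$ when $\CURL\bvec\varphi$ is merely $L^2$, which is exactly the scenario the adjoint-consistency estimates of Section~\ref{sec:quasi-interpolator} are tailored for — so the argument reduces to invoking those estimates with the right regularity bookkeeping, but the bookkeeping is where the care is needed.
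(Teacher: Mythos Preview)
Your identification of the weak limit and the argument that $\CURL(\mu\bvec{w})\equiv\bvec{0}$ with vanishing tangential trace on $\Gamma^\compl$ (your ``heart of the proof'' paragraph) is correct and matches the paper's Step~2. The gap is in your strong-convergence step. You propose to apply the continuous Maxwell compactness theorem directly to the lifted sequence $\bvec{w}_h^\star$, but $\bvec{w}_h^\star$ is only bounded in $\HdivG{\Omega}$: nothing controls $\CURL(\mu\bvec{w}_h^\star)$. The discrete orthogonality, even combined with the consistency of the quasi-interpolators, only tells you that $\int_\Omega\mu\bvec{w}_h^\star\cdot\CURL\bvec{\varphi}\to 0$ for each \emph{fixed} $\bvec{\varphi}$; it does not place $\bvec{w}_h^\star$ in (or near) the compact set $\HdivG{\Omega}\cap\HcurlGcmu{\Omega}$ with $\CURL(\mu\,\cdot\,)=\bvec{0}$. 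So the compactness argument does not fire. Relatedly, your ``lifting consistency'' claim $\norm{L^2(\Omega)}{\bvec{w}_h^\star-\Pdiv{k}{h}\vdof{w}{h}}\to 0$ is not available: the analog of \eqref{eq:proj.lift.Pkh} only gives $\lproj{k}{\Th}\bvec{w}_h^\star=\Pdiv{k}{h}\vdof{w}{h}$, and $(\Id-\lproj{k}{\Th})\bvec{w}_h^\star$ need not vanish for a sequence that is merely bounded in $L^2$. Finally, your proposed route to $s_{\DIV,h}(\vdof{w}{h},\vdof{w}{h})\to 0$ via $\norm{\mu,\DIV,h}{\vdof{w}{h}-\qIdiv{k}{h}\bvec{w}}\to 0$ is circular: the $L^2$-part of that norm is handled by what you have, but the stabilisation part \emph{is} essentially the quantity you want to prove converges.

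The paper (mirroring the proof of Theorem~\ref{thm:curl.compactness}) circumvents all three issues by Hodge-decomposing the \emph{difference} $\bvec{w}-\mathcal{L}_{\DIV,h}\vdof{w}{h}$, not the lifted sequence itself: writing $\bvec{w}-\mathcal{L}_{\DIV,h}\vdof{w}{h}=\bvec{a}(h)+\CURL\bvec{\psi}(h)$ with $\bvec{a}(h)\in\HdivG{\Omega}\cap\HcurlGc{\Omega}$ curl-free and $\bvec{\psi}(h)\in\HcurlG{\Omega}$. Now $\bvec{a}(h)$ genuinely lies in the compact space (its curl is zero, its divergence equals that of $\bvec{w}-\mathcal{L}_{\DIV,h}\vdof{w}{h}$, bounded), so Lemma~\ref{lem:Appr.convergence} applies to it. Using the left-inverse property \eqref{eq:lift.interp} and the cochain property one gets $\qIdiv{k}{h}\bvec{w}-\vdof{w}{h}=\qIdiv{k}{h}\bvec{a}(h)+\uC{k}{h}\qIcurl{k}{h}\bvec{\psi}(h)$, and hence $\norm{\mu,\DIV,h}{\qIdiv{k}{h}\bvec{w}-\vdof{w}{h}}^2=\term_1+\term_2$ with $\term_1$ involving $\qIdiv{k}{h}\bvec{a}(h)$ and $\term_2=(\qIdiv{k}{h}\bvec{w},\uC{k}{h}\qIcurl{k}{h}\bvec{\psi}(h))_{\mu,\DIV,h}$ (the $\vdof{w}{h}$-part drops by orthogonality). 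The crucial point is that $\term_2$ is estimated via the \emph{adjoint} consistency \eqref{eq:adjoint.consistency.curl} applied to the limit $\bvec{w}$ --- which, by Step~2, now satisfies $\CURL(\mu\bvec{w})=\bvec{0}$ and $\mu\bvec{w}\times\normal_\Omega=\bvec{0}$ on $\Gamma^\compl$ --- so the volumetric and boundary source terms in that estimate vanish, leaving only $\Appr{k}{\DIV,h}(\bvec{w})+\Bppr{k}{\CURL,h}(\mu\bvec{w})\to 0$. This yields $\norm{\mu,\DIV,h}{\qIdiv{k}{h}\bvec{w}-\vdof{w}{h}}\to 0$ in one stroke, from which both the $L^2$-convergence of $\Pdiv{k}{h}\vdof{w}{h}$ and the vanishing of the stabilisation follow non-circularly.
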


\begin{proof}
The proof is similar to that of Theorem \ref{thm:curl.compactness} and is therefore left to the reader.
\end{proof}

Our last compactness result is a discrete version of the Rellich theorem, on the compactness of sequences that are bounded in $\HgradG{\Omega}$. 
Discrete Rellich theorems have been studied for a long time (see, e.g., \cite{Eymard.Gallouet.ea:00,Droniou.Eymard.ea:18}), and are
easier to establish than Maxwell compactness results. The proof of the following proposition is only provided for the sake of completeness.

\begin{theorem}[Discrete Rellich theorem]\label{thm:grad.compactness}
Let $(\dof{q}{h})_{h\in\mathcal H}$ be such that $\dof{q}{h}\in\XgradG{k}{\Th}$ for each $h\in\mathcal H$, and $(\tnorm{\GRAD,h}{\dof{q}{h}}+\tnorm{\CURL,h}{\uG{k}{h}\dof{q}{h}})_{h\in\mathcal H}$ is bounded.
Then, there exists $q\in\HgradG{\Omega}$ such that, up to a subsequence as $h\to 0$,
$\Pgrad{k+1}{h}\dof{q}{h}\to q$ in $L^r(\Omega)$ for all $r<6$, and $s_{\GRAD,h}(\dof{q}{h},\dof{q}{h})\to 0$.
\end{theorem}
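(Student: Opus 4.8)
The plan is to establish the discrete Rellich theorem by extracting $L^2$-convergence of the potential reconstructions through a compactness argument that leverages the commuting quasi-interpolators and the conforming liftings of the DDR complex. First I would invoke the norm equivalence \eqref{eq:coercivity.sT} to deduce from the boundedness of $\tnorm{\GRAD,h}{\dof{q}{h}}$ that the reconstructions $\Pgrad{k+1}{h}\dof{q}{h}$ are bounded in $L^2(\Omega)$, and similarly that $\Pcurl{k}{h}\uG{k}{h}\dof{q}{h}$ is bounded in $L^2(\Omega)^3$. Using the commutation property of the DDR complex (the gradient of a discrete function reconstructs, via the curl-potential, to something consistent with the gradient), the key point is to transfer these bounds to a \emph{continuous} $\HgradG{\Omega}$ function. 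This is where the lifting operators $\Liftcurl$-type maps of \cite{Di-Pietro.Droniou.ea:25} enter: applying the conforming lifting on the gradient space produces $q_h^{\mathrm c}\in\HgradG{\Omega}$ (in fact in a piecewise polynomial conforming space on the simplicial submesh $\Sh$) whose $\GRAD$ is controlled by $\tnorm{\GRAD,h}{\dof{q}{h}}$, and which is $L^2$-close to $\Pgrad{k+1}{h}\dof{q}{h}$ up to $\mathcal O(h)$ terms times the bounded norms.

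Next I would apply the classical Rellich--Kondrachov theorem to the bounded sequence $(q_h^{\mathrm c})_{h\in\mathcal H}\subset H^1_0$-type space: up to a subsequence, $q_h^{\mathrm c}\to q$ strongly in $L^r(\Omega)$ for all $r<6$ (the Sobolev embedding exponent in dimension $3$) and weakly in $\HgradG{\Omega}$, with $q\in\HgradG{\Omega}$ since the latter is closed under weak $H^1$ convergence and the boundary condition on $\Gamma$ is preserved. Because $\norm{L^2(\Omega)}{\Pgrad{k+1}{h}\dof{q}{h}-q_h^{\mathrm c}}\lesssim h\,(\tnorm{\GRAD,h}{\dof{q}{h}}+\tnorm{\CURL,h}{\uG{k}{h}\dof{q}{h}})\to 0$, the reconstructions $\Pgrad{k+1}{h}\dof{q}{h}$ converge to the same limit $q$ in $L^2(\Omega)$, and an interpolation/boundedness argument upgrades this to $L^r(\Omega)$ for all $r<6$ (using that $\Pgrad{k+1}{h}\dof{q}{h}$ is itself bounded in $L^6$ via the discrete Sobolev embedding, if available, or simply bounded in $L^2$ combined with the $L^6$-boundedness of $q_h^{\mathrm c}$ and the $L^2\to 0$ difference).

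Finally, to obtain $s_{\GRAD,h}(\dof{q}{h},\dof{q}{h})\to 0$, I would use the quasi-interpolator $\qIgrad{k}{h}$ of Section \ref{sec:quasi-interpolator} applied to the limit $q$: by the polynomial consistency \eqref{eq:polynomial.consistency.sT} of $s_{\GRAD,T}$ and the primal consistency of the quasi-interpolator (Theorem \ref{thm:qI.primal.consistency}), one has $s_{\GRAD,h}(\qIgrad{k}{h}q,\qIgrad{k}{h}q)\to 0$ as $h\to 0$. Writing $\dof{q}{h}=\qIgrad{k}{h}q+\dof{e}{h}$ and testing the weak formulation \eqref{eq:compact.orth}-type orthogonality is not directly available here, so instead I would exploit that $\dof{e}{h}$ has reconstruction $\Pgrad{k+1}{h}\dof{e}{h}\to 0$ in $L^2$ (since both $\Pgrad{k+1}{h}\dof{q}{h}$ and $\Pgrad{k+1}{h}\qIgrad{k}{h}q$ converge to $q$, using the consistency of the quasi-interpolator), combined with a bound on $\tnorm{\GRAD,h}{\dof{e}{h}}$; a minimisation/orthogonality property of the DDR $L^2$-inner product — that $\norm{\mu,\GRAD,h}{\dof{q}{h}}^2$ is controlled by $\norm{L^2(\Omega)}{\Pgrad{k+1}{h}\dof{q}{h}}^2$ plus the squared stabilisation, with the stabilisation being the ``orthogonal'' part — lets one conclude $s_{\GRAD,h}(\dof{q}{h},\dof{q}{h})\to 0$. \textbf{The main obstacle} I anticipate is precisely this last step: cleanly extracting the vanishing of the stabilisation term requires either an energy-identity argument (passing to the limit in $\norm{\mu,\GRAD,h}{\dof{q}{h}}^2$ and matching it against $\norm{L^2(\Omega)}{q}^2$, which needs a weak-strong convergence pairing that may not be immediately at hand without additional adjoint-consistency input) or a careful use of the fact that the quasi-interpolator is a left-inverse of the lifting, so that $\dof{q}{h}-\qIgrad{k}{h}\Pgrad{k+1}{h}\dof{q}{h}$ — or an analogous difference built from the lifting — is small in the full component norm; reconciling the two descriptions of the ``error'' and showing the stabilisation part genuinely vanishes (rather than merely staying bounded) is the delicate point, and is likely where one must mirror the more involved arguments used in the proof of Theorem \ref{thm:curl.compactness}.
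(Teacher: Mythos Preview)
Your approach is substantially more elaborate than the paper's, and the part you flag as the ``main obstacle'' is in fact the easy part.

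The paper does \emph{not} use the liftings or the quasi-interpolators for this theorem. Instead it embeds the DDR vector into the equal-order HHO space of degree $k+1$ via $\dof{z}{h}=\big((\Pgrad{k+1}{T}\dof{q}{T})_{T},(\tr{k+1}{F}\dof{q}{F})_{F}\big)$, invokes \cite[Lemma 7]{Di-Pietro.Droniou:23} to bound the HHO $H^1$-like norm $\norm{1,h}{\dof{z}{h}}$ by $\tnorm{\CURL,h}{\uG{k}{h}\dof{q}{h}}$, and then applies the ready-made HHO discrete Rellich theorem \cite[Theorem 6.8]{Di-Pietro.Droniou:20}, which directly gives $L^r$-convergence for all $r<6$ and the correct limit in $\HgradG{\Omega}$. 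Your lifting-plus-classical-Rellich route for the $L^2$ part is plausible (the grad lifting $\Rgrad$ has the bounds \eqref{eq:lift.Rgrad.norm} you need, though note $\GRAD\Rgrad\dof{q}{h}$ is controlled by $\tnorm{\CURL,h}{\uG{k}{h}\dof{q}{h}}$, not by $\tnorm{\GRAD,h}{\dof{q}{h}}$ as you wrote), but the upgrade to $L^r$, $r<6$, would still require a discrete Sobolev embedding for $\Pgrad{k+1}{h}\dof{q}{h}$ that you do not justify.

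More importantly, the stabilisation vanishing is a one-line consequence of \cite[Lemma 7]{Di-Pietro.Droniou:23}, which gives the \emph{a priori} bound $s_{\GRAD,h}(\dof{q}{h},\dof{q}{h})\lesssim h^2\,\tnorm{\CURL,h}{\uG{k}{h}\dof{q}{h}}^2\to 0$. No limit identification, no quasi-interpolator, no orthogonality is needed. Your proposed route through $\dof{e}{h}=\dof{q}{h}-\qIgrad{k}{h}q$ cannot work as written: the argument in the Maxwell case (Theorem \ref{thm:curl.compactness}) crucially uses the orthogonality hypothesis \eqref{eq:compact.orth} to kill the gradient part and force $\norm{\mu,\CURL,h}{\qIcurl{k}{h}\bvec{v}-\vdof{v}{h}}\to 0$; here you have no such orthogonality, and knowing only that $\Pgrad{k+1}{h}\dof{e}{h}\to 0$ in $L^2$ with $\tnorm{\GRAD,h}{\dof{e}{h}}$ bounded does \emph{not} imply $s_{\GRAD,h}(\dof{e}{h},\dof{e}{h})\to 0$ --- the ``orthogonality'' between the potential and stabilisation contributions in $\norm{\mu,\GRAD,h}{\cdot}$ goes the wrong way for that deduction.
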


\begin{proof}
Consider the vector $\dof{z}{h}=((z_T)_{T\in\Th},(z_F)_{F\in\Fh})\coloneq\big((\Pgrad{k+1}{T}\dof{q}{T})_{T\in\Th},(\tr{k+1}{F}\dof{q}{F})_{F\in\Fh}\big)$ in the equal-order HHO space of degree $k+1$, and recall the discrete HHO norm of $\dof{z}{h}$:
\[
\norm{1,h}{\dof{z}{h}}^2\coloneq \sum_{T\in\Th}\left(\norm{L^2(T)}{\GRAD \Pgrad{k+1}{T}\dof{q}{T}}^2+\sum_{F\in\FT}h_F^{-1}\norm{L^2(F)}{\Pgrad{k+1}{T}\dof{q}{T}-\tr{k+1}{F}\dof{q}{F}}^2\right).
\]
By \cite[Lemma 7]{Di-Pietro.Droniou:23}, we have $\norm{1,h}{\dof{z}{h}}\lesssim \tnorm{\CURL,h}{\uG{k}{h}\dof{q}{h}}$, and $(\norm{1,h}{\dof{z}{h}})_{h\in\mathcal H}$ is therefore bounded. Noting that $z_F=\tr{k+1}{F}\dof{q}{F}\equiv 0$ whenever $F\subset\Gamma$ (since $\dof{q}{h}\in\XgradG{k}{\Th}$), and that $|\sum_{T\in\Th}\int_T z_T|\lesssim \tnorm{\GRAD,h}{\dof{q}{h}}\lesssim 1$, we can invoke \cite[Theorem 6.8]{Di-Pietro.Droniou:20} to get $q\in\HgradG{\Omega}$ such that, up to a subsequence, $\Pgrad{k+1}{h}\dof{q}{h}\to q$ in $L^r(\Omega)$ for all $r<6$.
The convergence to $0$ of the stabilisation also easily follows from the estimates in \cite[Lemma 7]{Di-Pietro.Droniou:23}, which show that $s_{\GRAD,h}(\dof{q}{h},\dof{q}{h})\lesssim h^2 \tnorm{\CURL,h}{\uG{k}{h}\dof{q}{h}}^{2}$.
\end{proof}

\begin{remark}[$L^2$-bound assumptions] \label{rem:L2}
In some cases, upon strengthening the orthogonality conditions, discrete Poincar\'e inequalities enable the removal of the $L^2$-bound assumptions on   $\vdof{v}{h}$, $\vdof{w}{h}$ and $\dof{q}{h}$ in Theorems \ref{thm:curl.compactness}, \ref{thm:div.compactness} and  \ref{thm:grad.compactness}.
For example, if $\Gamma=\emptyset$, Theorem \ref{thm:curl.compactness} holds if we replace \eqref{eq:compact.bound}--\eqref{eq:compact.orth} by
\begin{alignat}{2}
\label{eq:compact.bound.2}
&\text{$(\tnorm{\DIV,h}{\uC{k}{h}\vdof{v}{h}})_{h\in\mathcal H}$ is bounded},\\
\label{eq:compact.orth.2}
&(\vdof{v}{h},\vdof{z}{h})_{\mu,\CURL,h}=0\qquad\forall \vdof{z}{h}\in\ker\uC{k}{h}\,,\quad\forall h\in\mathcal H.
\end{alignat}
Indeed, \eqref{eq:compact.orth.2} implies \eqref{eq:compact.orth} (since $\Image\uG{k}{h}\subset \ker\uC{k}{h}$ by complex property) and, by the discrete Poincar\'e inequalities of \cite[Theorem 2]{Di-Pietro.Hanot:24} or \cite[Corollary 5]{Di-Pietro.Droniou.ea:25*1}, the assumption
\eqref{eq:compact.orth.2} implies that $\tnorm{\CURL,h}{\vdof{v}{h}}\lesssim \tnorm{\DIV,h}{\uC{k}{h}\vdof{v}{h}}$ which, together with \eqref{eq:compact.bound.2}, yields \eqref{eq:compact.bound}. 

We note that the assumptions \eqref{eq:compact.bound.2}--\eqref{eq:compact.orth.2} correspond to the ones chosen for the Maxwell compactness results of the non-conforming polytopal method in \cite{Lemaire.Pitassi:25} (in essence, these assumptions propose a different handling of the cohomology than \eqref{eq:compact.bound}--\eqref{eq:compact.orth}).
\end{remark}

\section{Quasi-interpolators for the DDR complex}\label{sec:quasi-interpolator}

In this section, we design quasi-interpolators for the DDR sequence. These are, contrary to \eqref{eq:interpolators}, interpolators with minimal-regularity spaces as domains; they therefore allow for error analyses of schemes without assuming smoothness of the solution to the continuous model. These quasi-interpolators also commute with the differential operators in the de Rham/DDR sequences, and enjoy (primal and adjoint) consistency properties, which are key to using them for error estimation.

\subsection{Design and cochain property}\label{sec:prop.lift}

We define here the DDR quasi-interpolators and state their cochain property.
Let $\ell\ge k+1$ and 
\begin{align*}
&\qILag{\ell}{h}:\HgradG{\Omega}\to\LagG{\ell}{\Sh}\,,\quad
\qINE{\ell}{h}:\HcurlG{\Omega}\to\NEG{\ell}{\Sh},\\
&\text{and }\quad
\qIRT{\ell}{h}:\HdivG{\Omega}\to\RTG{\ell}{\Sh}
\end{align*}
be the linear quasi-interpolators, defined in
\cite{Chaumont-Frelet.Vohralik:24,Chaumont-Frelet.Vohralik:25,ern_gudi_smears_vohralik_2022a},
for the finite element sequence of degree $\ell$ on the simplicial submesh $\Sh$. 
The DDR (linear) quasi-interpolators are then defined by
\begin{subequations}\label{eq:def.qI}
\begin{alignat}{2}
\label{eq:def.qIgrad}
\qIgrad{k}{h}&\coloneq \Igrad{k}{h}\circ\qILag{\ell}{h}:\HgradG{\Omega}\to\XgradG{k}{\Th}\,,\\
\label{eq:def.qIcurl}
\qIcurl{k}{h}&\coloneq \Icurl{k}{h}\circ\qINE{\ell}{h}:\HcurlG{\Omega}\to\XcurlG{k}{\Th}\,,\\
\label{eq:def.qIdiv}
\qIdiv{k}{h}&\coloneq \Idiv{k}{h}\circ\qIRT{\ell}{h}:\HdivG{\Omega}\to\XdivG{k}{\Th}.
\end{alignat}
\end{subequations}

The finite element quasi-interpolators $\qILag{\ell}{h}$, $\qINE{\ell}{h}$ and $\qIRT{\ell}{h}$
are defined by first projecting the argument onto a broken polynomial space, and then
performing a suitable averaging on patches of simplices. The patch depends on the considered interpolator but, to allow for a unified presentation, we will only consider the largest patch of all interpolators. If $\sfP\in\Th\cup\Sh$ is a polytopal element or a simplex, we therefore define the (4-level) \emph{simplicial patch} around $\sfP$ as
\[
\patch{\sfP}\coloneq \left\{S\in\Sh\,:\,\exists (S_i)_{i=1,\ldots,4}\in\Sh\text{ s.t. }S_1=S\,,\;\overline{S}_i\cap\overline{S}_{i+1}\not=\emptyset\ \forall i=1,2,3\,,\;
\overline{S}_4\cap\overline{\sfP}\not=\emptyset\right\}.
\]
In a similar way as for the polytopal patch, we will identify the simplicial patch $\patch{\sfP}$ with the (interior of the closure of the) domain $\cup_{S\in \patch{\sfP}}S$ it covers. The context will avoid any confusion between the two concepts.

In the following, we assume that the simplicial submesh $\Sh$ is fine enough so that, for all $T\in\Th$, the simplicial patch $\patch{S}$ around $S\in \Sh(T)$ is topologically trivial, and contained in the polytopal patch $\ppatch{T}$ around $T$ (see \eqref{eq:def.ppatch}). This is not a loss of generality as any given simplicial submesh of $\Th$ can be refined to achieve this property, which also ensures that $\patch{T}$ is contained in $\ppatch{T}$.

\begin{remark}[Generic notation and local interpolator]
If $S\in\Sh$ and $z\in\HgenG{\Omega}$, the quantity $(\qIFEgen{\ell}{h}z)|_S$ actually only depends on the values of $z$ on $\patch{S}$, and is therefore well-defined for all $z\in\HgenG{\patch{S}}$; we will denote this local interpolated function by $\qIFEgen{\ell}{S}z$. 
As a consequence, when $z\in\HgenG{\patch{T}}$ we can define $\qIFEgen{\ell}{T}z\in\FEgenG{\ell}{\Sh(T)}$ such that $(\qIFEgen{\ell}{T}z)|_S=\qIFEgen{\ell}{S}z$ for all $S\in \Sh(T)$. The local interpolator $\qIgen{k}{T}z$ then refers to $\Igen{k}{T}(\qIFEgen{\ell}{T}z)$.
\end{remark}

\begin{theorem}[Bounded cochain map]\label{thm:qI.cochain}
The quasi-interpolators \eqref{eq:def.qI} are well-defined and form a bounded cochain map, that is: the following diagram commutes,
\begin{equation}\label{eq:cochain.map}
  \begin{tikzcd}[column sep=2.2em]
    & 0 \arrow{r}{}
    & \HgradG{\Omega}\arrow{r}{\GRAD}\arrow{d}{\qIgrad{k}{h}}
    & \HcurlG{\Omega}\arrow{r}{\CURL}\arrow{d}{\qIcurl{k}{h}}
    & \HdivG{\Omega}\arrow{r}{\DIV}\arrow{d}{\qIdiv{k}{h}}
    &\L^2(\Omega)\arrow{r}{}\arrow{d}{\lproj{k}{\Th}}
    & 0\\
    & 0 \arrow{r}{}
    & \XgradG{k}{\Th}\arrow{r}{\uG{k}{h}}
    & \XcurlG{k}{\Th}\arrow{r}{\uC{k}{h}}
    & \XdivG{k}{\Th}\arrow{r}{\D{k}{h}}
    &\Poly{k}(\Th)\arrow{r}{}
    & 0
  \end{tikzcd}
\end{equation}
and the quasi-interpolators are uniformly bounded in $h$.
More precisely, we have the following local bounds: for all $\gensymbol\in\{\GRAD,\CURL,\DIV\}$ and $T\in\Th$,
\begin{equation}\label{eq:local.qI.bound}
\tnorm{\gensymbol,T}{\qIgen{k}{T}z}\lesssim \norm{L^2(\patch{T})}{z} + h_T \norm{L^2(\patch{T})}{\gensymbol z} \qquad\forall z\in \HgenG{\patch{T}}.
\end{equation}
Consequently, we also have the following global bound:
\begin{equation}\label{eq:global.qI.bound}
\tnorm{\gensymbol,h}{\qIgen{k}{h}z}\lesssim \norm{L^2(\Omega)}{z}+h\norm{L^2(\Omega)}{\gensymbol z} \qquad\forall z\in \HgenG{\Omega}.
\end{equation}
\end{theorem}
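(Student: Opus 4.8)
The plan is to prove Theorem~\ref{thm:qI.cochain} by reducing everything to known properties of the simplicial finite element quasi-interpolators $\qILag{\ell}{h}$, $\qINE{\ell}{h}$, $\qIRT{\ell}{h}$ and of the canonical DDR interpolators $\Igrad{k}{h}$, $\Icurl{k}{h}$, $\Idiv{k}{h}$.

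\textbf{Well-definedness and codomains.} First I would check that $\qIgen{k}{h}=\Igen{k}{h}\circ\qIFEgen{\ell}{h}$ makes sense. The simplicial quasi-interpolator $\qIFEgen{\ell}{h}$ maps $\HgenG{\Omega}$ into the conforming finite element space $\FEgenG{\ell}{\Sh}$, whose elements are (globally) continuous enough that the canonical DDR interpolators $\Igen{k}{h}$ can be applied to them (piecewise polynomials on a simplicial mesh have single-valued $L^1$-traces on all mesh entities). Because $\ell\ge k+1$, the DDR interpolator sees only the lower-degree projections it needs. The fact that $\qIFEgen{\ell}{h}$ enforces the homogeneous trace on $\Gamma$ (by design on the simplicial subcomplex, using that the mesh is compliant with $\Gamma$), combined with the last bullet point before Section~\ref{sec:ddr.norms} (the DDR interpolators applied to functions in $C^0\cap\HgenG{\Omega}$ land in the subspaces \eqref{eq:def.XDDR.D}), gives that $\qIgen{k}{h}$ indeed maps $\HgenG{\Omega}$ into $\XgenG{k}{\Th}$.

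\textbf{Commutation.} The commuting diagram \eqref{eq:cochain.map} is obtained by stacking two commuting diagrams. The top one is the commuting property of the simplicial quasi-interpolators: $\qINE{\ell}{h}\circ\GRAD=\GRAD\circ\qILag{\ell}{h}$, $\qIRT{\ell}{h}\circ\CURL=\CURL\circ\qINE{\ell}{h}$, and $\lproj{\ell}{\Sh}\circ\DIV=\DIV\circ\qIRT{\ell}{h}$, as established in \cite{Chaumont-Frelet.Vohralik:24,Chaumont-Frelet.Vohralik:25,ern_gudi_smears_vohralik_2022a}. The bottom one is the commuting property of the canonical DDR interpolators with the DDR differential operators (recalled after \eqref{eq:interpolators}), applied to the \emph{smooth enough} finite element functions produced by the first step: $\uG{k}{h}\circ\Igrad{k}{h}=\Icurl{k}{h}\circ\GRAD$, $\uC{k}{h}\circ\Icurl{k}{h}=\Idiv{k}{h}\circ\CURL$, and $\D{k}{h}\circ\Idiv{k}{h}=\lproj{k}{\Th}\circ\DIV$ on finite element fields. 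For the last square one also uses $\lproj{k}{\Th}\circ\lproj{\ell}{\Sh}=\lproj{k}{\Th}$ since $\ell\ge k+1\ge k$ and $\Sh$ refines $\Th$. Composing gives \eqref{eq:cochain.map}.

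\textbf{Local stability bounds.} This is the only genuinely quantitative part, but it is still a composition estimate. Fix $T\in\Th$ and $z\in\HgenG{\patch{T}}$. By the local boundedness/stability of the simplicial quasi-interpolators on patches (the patch $\patch{S}\subset\ppatch{T}$, with finitely many simplices of size $\eqsim h_T$, being topologically trivial by the standing assumption), for each $S\in\Sh(T)$ one has an estimate of the form $\norm{L^2(S)}{\qIFEgen{\ell}{S}z}\lesssim \norm{L^2(\patch{S})}{z}+h_T\norm{L^2(\patch{S})}{\gensymbol z}$; this is exactly the content of the stability/approximation results of \cite{Chaumont-Frelet.Vohralik:24,Chaumont-Frelet.Vohralik:25,ern_gudi_smears_vohralik_2022a}, after scaling. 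Summing over $S\in\Sh(T)$ (whose cardinality is uniformly bounded) and using $\patch{S}\subset\patch{T}$ gives $\norm{L^2(T)}{\qIFEgen{\ell}{T}z}\lesssim \norm{L^2(\patch{T})}{z}+h_T\norm{L^2(\patch{T})}{\gensymbol z}$, and similarly for $\seminorm{H^1(S)}{\cdot}$ if needed. Then I invoke the boundedness of the canonical DDR interpolator on local finite element spaces in the component norm, i.e.\ $\tnorm{\gensymbol,T}{\Igen{k}{T}w}\lesssim \norm{L^2(T)}{w}$ for $w\in\FEgen{\ell}{\Sh(T)}$ (this follows from the $L^2$-boundedness of the projectors $\lproj{\ell}{\sfP}$ defining $\Igen{k}{T}$ together with discrete trace/inverse inequalities on the shape-regular simplicial submesh, to handle the face/edge/vertex contributions in \eqref{eq:def.component.norm}; the vertex values for $\gensymbol=\GRAD$ are controlled by a discrete $L^\infty$--$L^2$ inverse estimate on simplices). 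Chaining the two bounds yields \eqref{eq:local.qI.bound}.

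\textbf{Global bound.} Squaring \eqref{eq:local.qI.bound}, summing over $T\in\Th$, and using finite overlap of the patches $(\patch{T})_{T\in\Th}$ (a consequence of mesh regularity) together with $h_T\le h$ gives \eqref{eq:global.qI.bound}. I expect the main obstacle to be the careful bookkeeping in the local stability estimate for the simplicial quasi-interpolator: one must cite the right scaled patch-stability statement from \cite{Chaumont-Frelet.Vohralik:24,Chaumont-Frelet.Vohralik:25,ern_gudi_smears_vohralik_2022a} and make sure the homogeneous boundary condition on $\Gamma$ is handled uniformly across the three spaces (this is where the mesh compliance with $\Gamma$ and the choice of boundary-adapted patches for faces on $\Gamma$ enter), and then verify that the canonical DDR interpolator's component-norm boundedness on simplicial finite element functions holds with a constant depending only on mesh regularity and the polynomial degrees — everything else is routine scaling and summation.
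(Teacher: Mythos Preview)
Your proposal is correct and follows essentially the same route as the paper's proof: decompose \eqref{eq:cochain.map} into the stacked diagrams $\DFE$ (FE quasi-interpolators) and $\DDDR$ (canonical DDR interpolators on FE functions), then obtain \eqref{eq:local.qI.bound} by composing the patch-stability bound \eqref{eq:qI.FE.bound} for $\qIFEgen{\ell}{h}$ with the component-norm bound $\tnorm{\gensymbol,T}{\Igen{k}{T}\xi}\lesssim\norm{L^2(T)}{\xi}$ for $\xi\in\FEgen{\ell}{\Sh(T)}$ (the paper isolates this last bound as Lemma~\ref{lem:bound.I.FE}, proved exactly via the discrete trace inequalities on the simplicial submesh that you sketch). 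Your remark that $\lproj{k}{\Th}\circ\lproj{\ell}{\Sh}=\lproj{k}{\Th}$ closes the last square is a detail the paper leaves implicit; the aside about ``similarly for $\seminorm{H^1(S)}{\cdot}$ if needed'' is not actually needed.
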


\begin{proof}
See Section \ref{sec:qI.well.posed}.
\end{proof}

\subsection{Consistency properties}

We next want to state consistency properties of the quasi-interpolators. For this, we need to introduce a few approximation measures. 
Recalling the definition \eqref{eq:def.ppatch} of polytopal patch $\ppatch{T}$ around an element $T\in\Th$, and the definition \eqref{eq:def.kgen} of $k_\gensymbol$, the local best approximation error, by broken polynomials, of a function and its differential is
\begin{equation}\label{eq:gen.approx.error}
\Appr{k}{\gensymbol,T}(z)\coloneq\left(\sum_{T'\in\ppatch{T}}\left[\norm{L^2(T')}{z-\lproj{k_\gensymbol}{T'}z}^2+ h_T^2\norm{L^2(T')}{(\gensymbol z)- \lproj{k_\gensymbol}{T'}(\gensymbol z)}^2\right]\right)^{1/2}\quad\forall z\in \Hgen{\ppatch{T}}.
\end{equation}
The global version of this approximation measure is defined by
\begin{equation}\label{eq:Appr.global}
\Appr{k}{\gensymbol,h}(z)=\left(\sum_{T\in\Th}\Appr{k}{\gensymbol,T}(z)^2\right)^{1/2}\quad\forall z\in \Hgen{\Omega}.
\end{equation}

The second set of quantities measures the global best approximation errors of functions and their relevant differential operator by broken finite element polynomials on the polytopal mesh:
\begin{equation}\label{eq:Bppr.grad}
\Bppr{k}{\GRAD,h}(q)\coloneq \left[\sum_{T\in\Th}\min_{q_T\in\Poly{k+1}(T)}
 \norm{L^2(T)}{\GRAD(q - q_T)}^2\right]^{1/2}\qquad\forall q\in\Hgrad{\Omega},
\end{equation}
and, for $\gensymbol\in\{\CURL,\DIV\}$,
\begin{equation}\label{eq:Bppr.gen}
\Bppr{k}{\gensymbol,h}(\bvec{v})\coloneq \left[\sum_{T\in\Th}\min_{\bvec{z}_T\in\FEgen{k+1}{T}}\left(\norm{L^2(T)}{\bvec{v} - \bvec{z}_T}^2 + \norm{L^2(T)}{\gensymbol(\bvec{v} - \bvec{z}_T)}^2\right)\right]^{1/2}\qquad \forall\bvec{v}\in\Hgen{\Omega}.
\end{equation}

Finally, to handle non-homogeneous boundary values (when $\Gamma\not=\partial\Omega$) in the adjoint consistency estimates, we need the following quantities, measuring approximation properties of boundary functions:
\begin{alignat}{2}
\label{eq:def.Tppr.1}
\Tppr{k}{h}(\xi)&\coloneq \left[\sum_{F\in\FGc}\norm{L^2(F)}{\xi-\lproj{k}{F}\xi}^2\right]^{1/2}\quad\forall \xi\in L^2(\Gamma^\compl),\\
\label{eq:def.Tppr.2}
\Tppr{k}{\alpha,\DIV,h}(\bvec{\zeta})&\coloneq \left[\sum_{F\in\FGc}\alpha_F^{-1}\norm{L^2(F)}{\bvec{\zeta}-\RTproj{k+1}{F}\bvec{\zeta}}^2\right]^{1/2}\quad\forall \bvec{\zeta}\in \bvec{L}^2(\Gamma^\compl),
\end{alignat}
where $\FGc\coloneq\{F\in\Fhb\,:\,F\subset\Gamma^\compl\}$, $\alpha=(\alpha_F)_{F\in\FGc}$ is a family of strictly positive numbers, and $\bvec{L}^2(\Gamma^\compl)$ denotes the space of square-integrable vector fields that are tangent to $\Gamma^\compl$.

\begin{theorem}[Primal consistency]\label{thm:qI.primal.consistency}
Let $\gensymbol\in\{\GRAD,\CURL,\DIV\}$ and $T\in\Th$. For all $z\in\HgenG{\ppatch{T}}$, it holds that
\begin{subequations}\label{eq:primal.consistencies}
\begin{alignat}{2}
\label{eq:consistency.Pgen}
\norm{L^2(T)}{z-\Pgen{k}{T}\qIgen{k}{T}z}&\lesssim \Appr{k}{\gensymbol,T}(z),\\
\label{eq:consistency.sTgen}
s_{\gensymbol,T}(\qIgen{k}{T}z,\qIgen{k}{T}z)^{1/2}&\lesssim \Appr{k}{\gensymbol,T}(z),\\
\label{eq:consistency.innergen}
\left|(\qIgen{k}{T}z,\dof{y}{T})_{\mu,\gensymbol,T} - \int_T \mu z\cdot \Pgen{k}{T}\dof{y}{T}\right|&\lesssim \Appr{k}{\gensymbol,T}(z)\norm{\mu,\gensymbol,T}{\dof{y}{T}}\qquad \forall \dof{y}{T}\in\Xgen{k}{T},
\end{alignat}
\end{subequations}
where $\mu$ is any function satisfying Assumption \ref{assum:physical.parameter}.
\end{theorem}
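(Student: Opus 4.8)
The plan is to reduce everything to two ingredients: (i) the fact that $\qIgen{k}{T}z = \Igen{k}{T}(\qIFEgen{\ell}{T}z)$ is a \emph{polynomial-preserving} quasi-interpolation composed with the \emph{exact} DDR interpolator, and (ii) the polynomial consistency of the potential reconstructions and of $s_{\gensymbol,T}$. The guiding idea is that, because the standard DDR interpolator commutes with $\GRAD,\CURL,\DIV$ and because $\Pgen{k}{T}\circ\Igen{k}{T}$ is the identity on $\Poly{k_\gensymbol}(T)$, the composite $\Pgen{k}{T}\qIgen{k}{T}$ is a projector onto $\Poly{k_\gensymbol}(T)$ (at least when restricted to polynomials), and similarly $s_{\gensymbol,T}(\qIgen{k}{T}w,\cdot)=0$ for polynomial $w$ by \eqref{eq:polynomial.consistency.sT}. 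So all three bounds will follow from a single ``one polynomial subtraction'' argument: pick a good broken polynomial approximant and estimate the remainder via the uniform boundedness \eqref{eq:local.qI.bound} of the quasi-interpolator.

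\emph{Step 1 (polynomial invariance).} I would first record that for $w\in\Poly{k_\gensymbol}(\ppatch{T})$ one has $\qIFEgen{\ell}{T}w=w$ on $T$ (the FE quasi-interpolators of \cite{Chaumont-Frelet.Vohralik:24,Chaumont-Frelet.Vohralik:25,ern_gudi_smears_vohralik_2022a} preserve polynomials of degree $\le\ell\ge k+1=k_\GRAD$, and the averaging on patches reproduces functions that are already globally polynomial), hence $\qIgen{k}{T}w=\Igen{k}{T}w$, and therefore $\Pgen{k}{T}\qIgen{k}{T}w=\Pgen{k}{T}\Igen{k}{T}w=w$ and $s_{\gensymbol,T}(\qIgen{k}{T}w,\dof{y}{T})=0$ for all $\dof{y}{T}$. (The equality $\Pgen{k}{T}\Igen{k}{T}w=w$ for $w$ polynomial is a standard DDR property I would cite from \cite{Di-Pietro.Droniou:23}; note it uses $k_\GRAD=k+1$ and $k_\CURL=k_\DIV=k$.)

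\emph{Step 2 (consistency of $\Pgen{k}{T}$, i.e.\ \eqref{eq:consistency.Pgen}).} Write $z-\Pgen{k}{T}\qIgen{k}{T}z=(z-w)-\Pgen{k}{T}\qIgen{k}{T}(z-w)$ for any $w\in\Poly{k_\gensymbol}(\ppatch{T})$, using Step 1. The first term is bounded by $\norm{L^2(T)}{z-w}$. For the second, combine $\norm{L^2(T)}{\Pgen{k}{T}\dof{y}{T}}\lesssim\tnorm{\gensymbol,T}{\dof{y}{T}}$ from \eqref{eq:bound.Pgen.sTgen} with the local quasi-interpolator bound \eqref{eq:local.qI.bound} applied to $z-w\in\HgenG{\patch{T}}$ (using $\patch{T}\subset\ppatch{T}$), giving $\norm{L^2(\patch{T})}{z-w}+h_T\norm{L^2(\patch{T})}{\gensymbol(z-w)}$. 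Then choose $w=\lproj{k_\gensymbol}{\ppatch{T}}z$ componentwise on the broken space; since $\gensymbol$ acts componentwise and $\lproj{k_\gensymbol}{T'}$ is the $L^2$-projector, $\norm{L^2(T')}{\gensymbol(z-w)}$ is \emph{not} immediately $\norm{L^2(T')}{\gensymbol z-\lproj{k_\gensymbol}{T'}(\gensymbol z)}$ — this mismatch is the one subtle point (see below). Taking the infimum over $w$, the right-hand side is exactly $\Appr{k}{\gensymbol,T}(z)$.

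\emph{Step 3 (consistency of $s_{\gensymbol,T}$, i.e.\ \eqref{eq:consistency.sTgen}).} Same subtraction: $s_{\gensymbol,T}(\qIgen{k}{T}z,\qIgen{k}{T}z)^{1/2}=s_{\gensymbol,T}(\qIgen{k}{T}(z-w),\qIgen{k}{T}(z-w))^{1/2}\lesssim\tnorm{\gensymbol,T}{\qIgen{k}{T}(z-w)}$ by \eqref{eq:bound.Pgen.sTgen}, then apply \eqref{eq:local.qI.bound} and optimise over $w$ exactly as in Step 2.

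\emph{Step 4 (inner-product consistency, i.e.\ \eqref{eq:consistency.innergen}).} Expand $(\qIgen{k}{T}z,\dof{y}{T})_{\mu,\gensymbol,T}-\int_T\mu z\cdot\Pgen{k}{T}\dof{y}{T}=\int_T\mu\,(\Pgen{k}{T}\qIgen{k}{T}z-z)\cdot\Pgen{k}{T}\dof{y}{T}+\overline{\mu}_T\,s_{\gensymbol,T}(\qIgen{k}{T}z,\dof{y}{T})$. Bound the first term by $\norm{L^\infty}{\mu}\,\norm{L^2(T)}{z-\Pgen{k}{T}\qIgen{k}{T}z}\,\norm{L^2(T)}{\Pgen{k}{T}\dof{y}{T}}$ and use Step 2 plus $\norm{L^2(T)}{\Pgen{k}{T}\dof{y}{T}}\le\norm{\mu,\gensymbol,T}{\dof{y}{T}}$ (from \eqref{eq:def.L2.norm} and coercivity of $\mu$). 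Bound the second via Cauchy–Schwarz for $s_{\gensymbol,T}$: $\overline{\mu}_T|s_{\gensymbol,T}(\qIgen{k}{T}z,\dof{y}{T})|\le s_{\gensymbol,T}(\qIgen{k}{T}z,\qIgen{k}{T}z)^{1/2}\cdot\overline{\mu}_T^{1/2}\cdot(\overline{\mu}_T s_{\gensymbol,T}(\dof{y}{T},\dof{y}{T}))^{1/2}\lesssim\Appr{k}{\gensymbol,T}(z)\,\norm{\mu,\gensymbol,T}{\dof{y}{T}}$ using Step 3 and \eqref{eq:def.L2.norm}; the factor $\overline{\mu}_T^{1/2}$ is absorbed because $\overline{\mu}_T$ is comparable to the bounds on $\mu$, which the $\lesssim$ constants are allowed to depend on (Assumption \ref{assum:physical.parameter}).

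\emph{Main obstacle.} The genuine difficulty is the one flagged in Step 2: $\Appr{k}{\gensymbol,T}(z)$ measures $\norm{L^2(T')}{(\gensymbol z)-\lproj{k_\gensymbol}{T'}(\gensymbol z)}$, i.e.\ the approximation of the \emph{differential} $\gensymbol z$, whereas the quasi-interpolator bound \eqref{eq:local.qI.bound} naturally produces $h_T\norm{L^2(\patch{T})}{\gensymbol(z-w)}$ for whatever polynomial $w$ we inserted. One cannot simply pick $w=\lproj{k_\gensymbol}{\ppatch{T}}z$ because $\gensymbol(\lproj{k_\gensymbol}{T'}z)\ne\lproj{k_\gensymbol-1}{T'}(\gensymbol z)$ in general. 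The clean fix, which I expect the authors use, is to exploit that \eqref{eq:local.qI.bound} is \emph{scale-invariant in the choice of $w$} so we may instead pick, on each $T'\in\ppatch{T}$, a polynomial $w|_{T'}\in\Poly{k_\gensymbol}(T')$ that is \emph{simultaneously} a good approximation of $z$ and whose differential is a good approximation of $\gensymbol z$ — e.g.\ a Taylor/averaged-Taylor polynomial, or the preimage under $\gensymbol$ (within $\FEgen{k+1}{T'}$) of $\lproj{k_\gensymbol}{T'}(\gensymbol z)$ corrected by $\lproj{k_\gensymbol}{T'}z$. With $h_T\eqsim h_{T'}$ on the patch, Bramble–Hilbert / standard polynomial approximation on $T'$ then gives $\norm{L^2(T')}{z-w}\lesssim\norm{L^2(T')}{z-\lproj{k_\gensymbol}{T'}z}+h_{T'}\norm{L^2(T')}{\gensymbol z-\lproj{k_\gensymbol}{T'}(\gensymbol z)}$ and $\norm{L^2(T')}{\gensymbol(z-w)}\lesssim\norm{L^2(T')}{\gensymbol z-\lproj{k_\gensymbol}{T'}(\gensymbol z)}$, which assembled over the patch is controlled by $\Appr{k}{\gensymbol,T}(z)$. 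Everything else is routine; the delicate bookkeeping is making this ``good $w$'' choice on the patch while staying inside $\HgenG$ near $\Gamma$ (here one uses that components on $\Gamma$-faces/edges vanish, matching the homogeneous boundary condition built into $\patch{T}$).
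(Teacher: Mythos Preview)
Your high-level strategy is right, and Step 4 is fine once Steps 2--3 are established. The gap is in Steps 2--3, precisely at the point you flag as the ``main obstacle''. The subtraction argument via \eqref{eq:local.qI.bound} applied to $z-w$ cannot be carried out as written, for two reasons that your proposed fix does not resolve.

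First, for the polynomial invariance of Step 1 to hold, $w$ must be a \emph{single} polynomial on the whole simplicial patch $\patch{T}$ (so that $\qIFEgen{\ell}{T}w=w$). But with a single polynomial, the quantity $\norm{L^2(\patch{T})}{z-w}+h_T\norm{L^2(\patch{T})}{\gensymbol(z-w)}$ is \emph{not} controlled by $\Appr{k}{\gensymbol,T}(z)$: take $z$ to be a continuous piecewise polynomial of degree $k_\gensymbol$ on $\ppatch{T}$; then $\Appr{k}{\gensymbol,T}(z)=0$, yet no global polynomial $w$ achieves $\norm{L^2(\patch{T})}{z-w}=0$. Second, your proposed fix of choosing $w$ \emph{elementwise} (broken on $\ppatch{T}$) destroys both ingredients at once: $z-w$ is no longer in $\Hgen{\patch{T}}$, so \eqref{eq:local.qI.bound} does not apply, and $\qIFEgen{\ell}{T}$ does not preserve a broken $w$, so Step 1 fails. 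The boundary-condition issue you mention near $\Gamma$ is an additional obstruction for the single-polynomial route (a nonzero polynomial will not lie in $\HgenG{\patch{T}}$).

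The paper avoids this altogether by \emph{not} routing through \eqref{eq:local.qI.bound}. Instead it decomposes
\[
z-\Pgen{k}{T}\qIgen{k}{T}z=(z-\lproj{k_\gensymbol}{T}z)+\Pgen{k}{T}\Igen{k}{T}\big(\lproj{k_\gensymbol}{T}z-\qIFEgen{\ell}{T}z\big),
\]
observes that $\lproj{k_\gensymbol}{T}z-\qIFEgen{\ell}{T}z\in\FEgen{\ell}{\Sh(T)}$, and applies Lemma~\ref{lem:bound.I.FE} (the purely $L^2$ bound $\tnorm{\gensymbol,T}{\Igen{k}{T}\xi}\lesssim\norm{L^2(T)}{\xi}$ on FE functions, with no boundary conditions and no differential term). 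This reduces everything to $\norm{L^2(T)}{z-\lproj{k_\gensymbol}{T}z}+\norm{L^2(T)}{z-\qIFEgen{\ell}{T}z}$, and the latter is controlled by $\Appr{k}{\gensymbol,T}(z)$ via the FE quasi-interpolator approximation estimate \eqref{eq:qI.FE.approx}, which is obtained by summing the simplex-wise bounds and using that the simplicial $L^2$-projection error on $S'\subset T'$ is no larger than the polytopal one on $T'$. The same manipulation, combined with \eqref{eq:polynomial.consistency.sT}, gives \eqref{eq:consistency.sTgen}. The key point is that Lemma~\ref{lem:bound.I.FE} lets you subtract a polynomial \emph{only on $T$} (not on the whole patch), and the patch dependence is handled entirely inside the FE quasi-interpolator estimate \eqref{eq:qI.FE.approx}, where broken-on-$T'$ projections arise naturally.
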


\begin{proof}
See Section \ref{sec:proof.primal.consistency}.
\end{proof}

\begin{remark}[Asymptotic behaviour of $\Appr{k}{\gensymbol,T}$]\label{rem:def.Appr}
From \cite[Theorem 1.45]{Di-Pietro.Droniou:20}, if $z\in\Hgen{\ppatch{T}}$ is such that $z|_{T'}\in H^{k_\gensymbol +1}(T')$ for all $T'\in\ppatch{T}$, then
\[
\Appr{k}{\gensymbol,T}(z)\lesssim h_T^{k_\gensymbol+1}\Bigg(\sum_{T'\in\ppatch{T}}\seminorm{H^{k_\gensymbol+1}(T')}{z}^2\Bigg)^{1/2}.
\]
\end{remark}


\begin{remark}[Approximation properties of differential operators]
Taking $q\in\HgradG{\ppatch{T}}$, applying \eqref{eq:consistency.Pgen} with $\gensymbol=\CURL$ to $z=\GRAD q\in\HcurlG{\ppatch{T}}$, and using the cochain map property of the quasi-interpolators, we obtain an estimate on the primal consistency for the reconstructed gradient $\Pcurl{k}{T}\circ\uG{k}{T}$:
\[
\norm{L^2(T)}{\GRAD q-\Pcurl{k}{T}\uG{k}{T}\qIgrad{k}{T}q}\lesssim \Appr{k}{\CURL,T}(\GRAD q).
\]
A similar estimate can be obtained for $\Pdiv{k}{T}\circ\uC{k}{T}$. The estimate for $\D{k}{h}$ directly follows from the commuting diagram \eqref{eq:cochain.map}, which shows that $\D{k}{h}\circ\qIdiv{k}{h}=\lproj{k}{\Th}\circ\DIV$.
\end{remark}

Non-conforming methods, such as polytopal methods, do not satisfy exact integration-by-parts (IBP) formulas. The analysis of numerical schemes based on these methods therefore requires to evaluate the defects, called adjoint consistency errors, in some discrete forms of IBP. Besides accounting for the novel quasi-interpolators and providing estimates using minimal-regularity assumptions, the following theorem extends the results of \cite[Section 6.2]{Di-Pietro.Droniou:23} to generic mixed boundary conditions (not just fully homogeneous boundary conditions).

\begin{theorem}[Adjoint consistency]\label{thm:qI.adjoint.consistency}
Let $\normal_{\Omega}$ be the unit outer normal to $\Omega$ on $\partial\Omega$, and $\mu$ satisfy Assumption \ref{assum:physical.parameter} (with $n_\gensymbol$ adapted to the particular scalar/vector-valued functions considered below).
The following adjoint consistency estimates hold:
\begin{enumerate}[label=\emph{(\roman*)}]
\item\label{it:adj.cons.grad}\emph{Adjoint consistency for the gradient:} let $\bvec{v}\in \Hdivmu{\Omega}\cap \Hcurl{\Omega}$ be such that $\mu\bvec{v}\cdot\normal_\Omega\in L^2(\Gamma^\compl)$. Then, for all $\dof{q}{h}\in\XgradG{k}{\Th}$, it holds that
\begin{subequations}\label{eq:adjoint.consistency}
\begin{equation}
  \label{eq:adjoint.consistency.grad}
  \begin{aligned}
    \Bigg|(\qIcurl{k}{h}\bvec{v},\uG{k}{h}\dof{q}{h})_{\mu,\CURL,h}+&\int_\Omega \DIV (\mu\bvec{v})\,\Pgrad{k+1}{h}\dof{q}{h}-\int_{\Gamma^\compl}\mu\bvec{v}\cdot\normal_{\Omega}\,\tr{k+1}{\partial\Omega}\dof{q}{h}\Bigg|\\
    \lesssim{}&
    \left(\Appr{k}{\CURL,h}(\bvec{v})+\Bppr{k}{\DIV,h}(\mu\bvec{v})\right)\left(\tnorm{\GRAD,h}{\dof{q}{h}}+\tnorm{\CURL,h}{\uG{k}{h}\dof{q}{h}}\right)\\
    &+\Tppr{k}{h}(\mu\bvec{v}\cdot\normal_\Omega)\Bigg(\sum_{F\in\FGc}\tnorm{\GRAD,F}{\dof{q}{F}}^2\Bigg)^{1/2},
  \end{aligned}
\end{equation}
where $(\tr{k+1}{\partial\Omega}\dof{q}{h})|_F=\tr{k+1}{F}\dof{q}{F}$ for all $F\in\Fhb$.

\item\label{it:adj.cons.curl}\emph{Adjoint consistency for the curl:} let $\bvec{w}\in \Hcurlmu{\Omega}\cap \Hdiv{\Omega}$ be such that $\mu\bvec{w}\times\normal_\Omega\in \bvec{L}^2(\Gamma^\compl)$. Then, for all $\vdof{v}{h}\in\XcurlG{k}{\Th}$ and any strictly positive weights $\alpha=(\alpha_F)_{F\in\FGc}$, it holds that
\begin{equation}
\begin{aligned}
  \label{eq:adjoint.consistency.curl}
    \Bigg|(\qIdiv{k}{h}\bvec{w},{}&\uC{k}{h}\vdof{v}{h})_{\mu,\DIV,h}-\int_\Omega \CURL (\mu\bvec{w})\cdot\Pcurl{k}{h}\vdof{v}{h}
    -\int_{\Gamma^\compl}(\mu\bvec{w}\times\normal_{\Omega})\cdot\trt{k}{\partial\Omega}\vdof{v}{h}\Bigg|\\
    \lesssim{}&
    \left(\Appr{k}{\DIV,h}(\bvec{w})+\Bppr{k}{\CURL,h}(\mu\bvec{w})\right)\left(\tnorm{\CURL,h}{\vdof{v}{h}}+\tnorm{\DIV,h}{\uC{k}{h}\vdof{v}{h}}\right)
    \\
    &+\Tppr{k}{\alpha,\DIV,h}(\mu\bvec{w}\times\normal_{\Omega})
      \Bigg(\sum_{F\in\FGc}\alpha_F\tnorm{\CURL,F}{\vdof{v}{F}}^2\Bigg)^{1/2},
\end{aligned}
\end{equation}
where $(\trt{k}{\partial\Omega}\vdof{v}{h})|_F=\trt{k}{F}\vdof{v}{F}$ for all $F\in\Fhb$.

\item\emph{Adjoint consistency for the divergence}: for all $q\in \Hgradmu{\Omega}$ and all $\vdof{w}{h}\in\Xdiv{k}{\Th}$, it holds that
\begin{equation}
\label{eq:adjoint.consistency.div}
    \left|\int_\Omega \mu q\, \D{k}{h}\vdof{w}{h}+\int_\Omega \GRAD (\mu q) \cdot\Pdiv{k}{h}\vdof{w}{h}
    -\int_{\partial\Omega} \mu q\, w_{h,{\rm n},\partial\Omega}\right|
    \lesssim 
    \Bppr{k}{\GRAD,h}(\mu q)\tnorm{\DIV,h}{\vdof{w}{h}}
\end{equation}
where $(w_{h,{\rm n},\partial\Omega})|_F=\omega_{TF} w_F$ for all $F\in\Fhb$ (with $T\in\Th$ the element containing $F$ in its boundary).
\end{subequations}
\end{enumerate}
\end{theorem}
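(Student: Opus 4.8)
The plan is to derive all three estimates from a single template: reduce each discrete $\mu$-weighted pairing to a continuous-type integral pairing using the primal consistency of Theorem~\ref{thm:qI.primal.consistency}, then invoke the \emph{exact} discrete integration-by-parts (IBP) identities that the DDR operators, potentials and traces satisfy by construction \cite[Section~3.3]{Di-Pietro.Droniou:23}, with the roles of $\GRAD$, $\CURL$, $\DIV$ permuted as in~\eqref{eq:derham.complex}. I detail~\ref{it:adj.cons.grad}; part~\ref{it:adj.cons.curl} is entirely parallel (swap $\CURL\leftrightarrow\DIV$, scalar traces $\tr{k+1}{F}$ for tangential traces $\trt{k}{F}$, Raviart--Thomas for N\'ed\'elec elements, normal jump/trace for the tangential one), and~\eqref{eq:adjoint.consistency.div} is the simplest case, since no quasi-interpolator enters. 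For~\ref{it:adj.cons.grad}, apply~\eqref{eq:consistency.innergen} on each $T\in\Th$ with $z=\bvec{v}$ and $\dof{y}{T}=\uG{k}{T}\dof{q}{T}$, then sum (the relevant quasi-interpolator and~\eqref{eq:consistency.innergen} being used in their natural extension to fields without boundary condition on $\Gamma$, which is harmless here since the pairing only sees interior potentials while $\dof{q}{h}$ carries the homogeneous condition on $\Gamma$): using $\uG{k}{h}\dof{q}{h}\in\XcurlG{k}{\Th}$ (subcomplex property) and $\norm{\mu,\CURL,h}{\uG{k}{h}\dof{q}{h}}\eqsim\tnorm{\CURL,h}{\uG{k}{h}\dof{q}{h}}$ by~\eqref{eq:coercivity.sT}, a Cauchy--Schwarz over $\Th$ yields
\[
\Bigl|(\qIcurl{k}{h}\bvec{v},\uG{k}{h}\dof{q}{h})_{\mu,\CURL,h}-\sum_{T\in\Th}\int_T\mu\bvec{v}\cdot\Pcurl{k}{T}\uG{k}{T}\dof{q}{T}\Bigr|\lesssim\Appr{k}{\CURL,h}(\bvec{v})\,\tnorm{\CURL,h}{\uG{k}{h}\dof{q}{h}},
\]
so it remains to control the residual of the discrete IBP applied to the sum on the left.

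For each $T\in\Th$, let $\bvec{z}_T\in\RT{k+1}{T}$ realise the minimum defining $\Bppr{k}{\DIV,T}(\mu\bvec{v})$, shifted by an additive constant vector so that a Poincar\'e--Wirtinger argument makes both $\norm{L^2(T)}{\mu\bvec{v}-\bvec{z}_T}$ and $h_T\norm{L^2(T)}{\DIV(\mu\bvec{v}-\bvec{z}_T)}$ bounded by $\Bppr{k}{\DIV,T}(\mu\bvec{v})$. Replacing $\mu\bvec{v}$ by $\bvec{z}_T$ in $\int_T\mu\bvec{v}\cdot\Pcurl{k}{T}\uG{k}{T}\dof{q}{T}$ costs $\norm{L^2(T)}{\mu\bvec{v}-\bvec{z}_T}\,\tnorm{\CURL,T}{\uG{k}{T}\dof{q}{T}}$ by~\eqref{eq:bound.Pgen.sTgen}; the exact DDR IBP for the gradient reconstruction tested against $\bvec{z}_T$ then reads
\[
\sum_{T\in\Th}\int_T\bvec{z}_T\cdot\Pcurl{k}{T}\uG{k}{T}\dof{q}{T}=-\sum_{T\in\Th}\int_T\Pgrad{k+1}{T}\dof{q}{T}\,\DIV\bvec{z}_T+\sum_{T\in\Th}\sum_{F\in\FT}\omega_{TF}\int_F(\bvec{z}_T\cdot\normal_F)\,\tr{k+1}{F}\dof{q}{F}.
\]
The volume residual $\sum_{T\in\Th}\int_T\Pgrad{k+1}{T}\dof{q}{T}\,\DIV(\mu\bvec{v}-\bvec{z}_T)$ is $\lesssim\Bppr{k}{\DIV,h}(\mu\bvec{v})\,\tnorm{\GRAD,h}{\dof{q}{h}}$ by~\eqref{eq:bound.Pgen.sTgen}. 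For the face sum, regroup by faces (each $\tr{k+1}{F}\dof{q}{F}$ being single-valued): faces in $\Gamma$ vanish since $\tr{k+1}{F}\dof{q}{F}\equiv 0$ there; an interior face $F=T_1\cap T_2$ telescopes to $\int_F\omega_{T_1F}\bigl((\bvec{z}_{T_1}-\bvec{z}_{T_2})\cdot\normal_F\bigr)\tr{k+1}{F}\dof{q}{F}$, which, as $\mu\bvec{v}\in\Hdiv{\Omega}$ has single-valued normal trace, is the pairing of $\tr{k+1}{F}\dof{q}{F}$ with the jump of $(\bvec{z}_{T_i}-\mu\bvec{v})\cdot\normal_F$, bounded via a scaled $\Hdiv{T}$ trace inequality and Cauchy--Schwarz against $\norm{L^2(F)}{\tr{k+1}{F}\dof{q}{F}}$; on a face in $\Gamma^\compl$ one subtracts the matching $\int_F(\mu\bvec{v}\cdot\normal_\Omega)\tr{k+1}{F}\dof{q}{F}$ and projects both normal traces onto the polynomial space containing $\Image\tr{k+1}{F}$ (the residual is $L^2(F)$-orthogonal to $\tr{k+1}{F}\dof{q}{F}$), leaving a term controlled by $\Tppr{k}{h}(\mu\bvec{v}\cdot\normal_\Omega)\bigl(\sum_{F\in\FGc}\tnorm{\GRAD,F}{\dof{q}{F}}^2\bigr)^{1/2}$. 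Collecting these bounds yields~\eqref{eq:adjoint.consistency.grad}.

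The variations are minor. For~\eqref{eq:adjoint.consistency.curl} one inserts the weights $\alpha_F$ before the last Cauchy--Schwarz on the $\Gamma^\compl$-faces, splitting into $\Tppr{k}{\alpha,\DIV,h}(\mu\bvec{w}\times\normal_\Omega)$ times $\bigl(\sum_{F\in\FGc}\alpha_F\tnorm{\CURL,F}{\vdof{v}{F}}^2\bigr)^{1/2}$, using that the tangential trace of $\NE{k+1}{T}$ on a face lies in the (rotated) $\RT{k+1}{F}$ and that $\trt{k}{F}\vdof{v}{F}\in\vPoly{k}(F)\subset\RT{k+1}{F}$, so that $\RTproj{k+1}{F}$ arises naturally; faces in $\Gamma$ again drop out since $\trt{k}{F}\vdof{v}{F}\equiv\bvec{0}$ there. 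For~\eqref{eq:adjoint.consistency.div} there is no quasi-interpolation: one approximates $\mu q$ directly by $r_T\in\Poly{k+1}(T)$ realising $\Bppr{k}{\GRAD,T}(\mu q)$ (constant-shifted for Poincar\'e), applies the exact IBP for $\D{k}{T}$ and $\Pdiv{k}{T}$ tested against $r_T$, and reorganises the face sum; the continuity of $\mu q\in\Hgrad{\Omega}$ cancels the two-valued part on interior faces and the full boundary integral $\int_{\partial\Omega}\mu q\,w_{h,{\rm n},\partial\Omega}$ is matched up to the stated $\Bppr{k}{\GRAD,h}(\mu q)\tnorm{\DIV,h}{\vdof{w}{h}}$ error.

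The main obstacle is the face bookkeeping, i.e.\ getting the \emph{scalings} right. The trace reconstructions $\tr{k+1}{F}$ (and $\trt{k}{F}$) have higher degree than the raw edge/face data, so their $L^2(F)$-norms are not controlled by $\tnorm{\gensymbol,F}{\cdot}$ alone: one must also use the component norm of the \emph{image} under the discrete differential operator, which is precisely why the factor $\tnorm{\GRAD,h}{\dof{q}{h}}+\tnorm{\CURL,h}{\uG{k}{h}\dof{q}{h}}$ (resp.\ $\tnorm{\CURL,h}{\vdof{v}{h}}+\tnorm{\DIV,h}{\uC{k}{h}\vdof{v}{h}}$) shows up on the right-hand side of~\eqref{eq:adjoint.consistency.grad} (resp.\ \eqref{eq:adjoint.consistency.curl}); this estimate combines discrete trace and inverse inequalities with the HHO-type bound of \cite[Lemma~7]{Di-Pietro.Droniou:23}, and the discrete trace estimate for the DDR $\Hgrad{\Omega}$-space established in the appendix streamlines exactly this step together with the $\Gamma^\compl$ bookkeeping for~\ref{it:adj.cons.grad}. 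A secondary subtlety is that $\mu\bvec{v}$ (resp.\ $\mu\bvec{w}$) possesses only a single-valued normal (resp.\ tangential) trace, with $L^2$-regularity only on $\Gamma^\compl$, so the interior-face terms must be handled entirely through the polynomial approximants $\bvec{z}_T$, the continuity of the continuous field being used solely to annihilate its own would-be two-valued contribution.
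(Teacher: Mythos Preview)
Your overall template (primal consistency to pass from the $\mu$-weighted pairing to an integral, then the exact DDR IBP with a local polynomial approximant $\bvec{z}_T$) matches the paper for the first two steps, and your treatment of~\eqref{eq:adjoint.consistency.div} is essentially the paper's. But your handling of the \emph{interior} face sums in parts~\ref{it:adj.cons.grad} and~\ref{it:adj.cons.curl} has a genuine gap.

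You propose to telescope an interior face $F=T_1\cap T_2$ to a jump $(\bvec{z}_{T_1}-\bvec{z}_{T_2})\cdot\normal_F=[(\bvec{z}_{T_1}-\mu\bvec{v})-(\bvec{z}_{T_2}-\mu\bvec{v})]\cdot\normal_F$ and then bound $\int_F(\bvec{z}_{T_i}-\mu\bvec{v})\cdot\normal_F\,\tr{k+1}{F}\dof{q}{F}$ via a ``scaled $\Hdiv{T}$ trace inequality and Cauchy--Schwarz against $\norm{L^2(F)}{\tr{k+1}{F}\dof{q}{F}}$''. This step fails under the stated minimal regularity: $\mu\bvec{v}\in\Hdiv{\Omega}$ only gives a normal trace in $H^{-1/2}(\partial T)$, not in $L^2(F)$, so $(\bvec{z}_{T_i}-\mu\bvec{v})\cdot\normal_F$ is not an $L^2(F)$ object and there is no $L^2(F)$-scaled trace inequality for generic $\Hdiv{T}$ fields. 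Pairing in $H^{-1/2}(\partial T)\times H^{1/2}(\partial T)$ does not help either, because the piecewise-polynomial collection $(\tr{k+1}{F}\dof{q}{F})_{F\in\FT}$ is in general discontinuous across edges and therefore not in $H^{1/2}(\partial T)$. The same obstruction occurs verbatim for~\ref{it:adj.cons.curl} with tangential traces.

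The paper resolves this by inserting the \emph{conforming liftings} $\Rgrad:\Xgrad{k}{\Th}\to\Hgrad{\Omega}$ and $\Rcurl:\Xcurl{k}{\Th}\to\Hcurl{\Omega}$ of \cite{Di-Pietro.Droniou.ea:25}. Since $\bvec{z}_T\cdot\normal_F\in\Poly{k}(F)$ and $\lproj{k}{F}\Rgrad\dof{q}{h}=\lproj{k}{F}\tr{k+1}{F}\dof{q}{F}$, one replaces $\tr{k+1}{F}\dof{q}{F}$ by $\Rgrad\dof{q}{h}$ in the face sum against $\bvec{z}_T$; now the test function is globally in $\Hgrad{\Omega}$, so the $H^{-1/2}/H^{1/2}$ duality is legitimate. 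A global IBP over $\Omega$ (using $\mu\bvec{v}\in\Hdiv{\Omega}$) then cancels all interior contributions at once, and element-wise IBP converts the remaining $(\bvec{z}_T-\mu\bvec{v})$ duality back into volumetric terms controlled by $\Bppr{k}{\DIV,h}(\mu\bvec{v})$ against the $\Hgrad{\Omega}$-norm of $\Rgrad\dof{q}{h}$, which is in turn bounded by $\tnorm{\GRAD,h}{\dof{q}{h}}+\tnorm{\CURL,h}{\uG{k}{h}\dof{q}{h}}$. The $\Gamma^\compl$ residual is handled as you indicate. The lifting is the missing ingredient; without it, the face sums cannot be closed under the minimal-regularity assumption $\bvec{v}\in\Hdivmu{\Omega}\cap\Hcurl{\Omega}$.
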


\begin{proof}
See Section \ref{sec:proof.adjoint.consistency}.
\end{proof}

\begin{remark}[Quasi-interpolators in the adjoint consistency error]\label{rem:BC.qI}
As implicitly indicated by the domain of $\bvec{v}$ in \ref{it:adj.cons.grad} of Theorem \ref{thm:qI.adjoint.consistency}, the quasi-interpolator $\qIcurl{k}{h}$ in \eqref{eq:adjoint.consistency.grad} is the one without boundary conditions, that is, defined on $\Hcurl{\Omega}$. We note however that, if $\bvec{v}\in\HcurlS{\Omega}$ for some $\Sigma\subset\partial\Omega$, we could as well consider the quasi-interpolator corresponding to this boundary condition since, in the proof, the only property required on the quasi-interpolator is the consistency property \eqref{eq:consistency.innergen}. The same consideration holds true for the quasi-interpolator $\qIdiv{k}{h}$ in \ref{it:adj.cons.curl}.
\end{remark}

\begin{remark}[Asymptotic behaviours of the volumetric terms $\Appr{k}{\gensymbol,h}$ and $\Bppr{k}{\gensymbol,h}$]\label{rem:asym.Appr}
By Remark \ref{rem:def.Appr} (together with the fact that, for all $T'\in\Th$, $\mathrm{Card}\{T\in\Th\,:\,T'\in\ppatch{T}\}\lesssim 1$ by mesh regularity), and the approximation properties of local polynomial, Nédélec and Raviart--Thomas functions on polyhedral meshes (see \cite[Theorem 1.45]{Di-Pietro.Droniou:20}, \cite[Lemma 8]{Di-Pietro.Droniou:23} and \cite[Lemma 10]{Di-Pietro.Droniou.ea:25}), we have, for $\gensymbol\in\{\GRAD,\CURL,\DIV\}$:
\begin{alignat*}{2}
\Appr{k}{\gensymbol,h}(z)\lesssim{}& h^{k_\gensymbol+1}\seminorm{H^{k_\gensymbol+1}(\Th)}{z}\qquad&\forall z\in\Hgen{\Omega}\cap H^{k_\gensymbol+1}(\Th),\\
\Bppr{k}{\gensymbol,h}(\phi)\lesssim{}& h^{k+1}\times
  \left\{\begin{array}{ll}
    \seminorm{H^{k+2}(\Th)}{\phi}&\text{ if $\gensymbol=\GRAD$}\\
    \seminorm{H^{k+1}(\Th)}{\phi}+\seminorm{H^{k+2}(\Th)}{\phi}&\text{ if $\gensymbol\in\{\CURL,\DIV\}$}
  \end{array}\right.\qquad&\forall \phi\in\Hgen{\Omega}\cap H^{k+2}(\Th).
\end{alignat*}
For $\gensymbol\in\{\CURL,\DIV\}$ and simplicial meshes, the Peetre--Tartar lemma leads to the slightly improved bound
$\Bppr{k}{\gensymbol,h}(\phi)\lesssim h^{k+1}(\seminorm{H^{k+1}(\Th)}{\phi} + \seminorm{H^{k+1}(\Th)}{\gensymbol \phi})$, see~\cite[Theorem 3.14]{Hiptmair:02}.
The proof however relies on the use of a reference element, a concept that is not available for polytopal meshes.
Extending this improved bound on $\Bppr{k}{\gensymbol,h}(\phi)$ to polytopal elements, by ensuring that the hidden constant only depends on the mesh regularity
factor, does not appear to be completely trivial.
\end{remark}

\begin{remark}[Boundary terms for the adjoint consistency of the gradient]\label{rem:asym.Tppr}
If $\mu\bvec{v}\cdot\normal_\Omega\in H^{k+1}(\FGc)$, the approximation properties of $L^2$-projectors on polynomial spaces \cite[Theorem 1.45]{Di-Pietro.Droniou:20} yield
\[
\Tppr{k}{h}(\mu\bvec{v}\cdot\normal_\Omega)\lesssim h^{k+1}\seminorm{H^{k+1}(\FGc)}{\mu\bvec{v}\cdot\normal_\Omega}.
\]
Moreover, using the trace inequality of Lemma \ref{lem:trace.Xgrad} in Appendix \ref{appen:trace}, the estimate \eqref{eq:adjoint.consistency.grad} can alternatively be written
\begin{equation*}
  \begin{aligned}
    \Bigg|(\qIcurl{k}{h}\bvec{v},\uG{k}{h}\dof{q}{h})_{\mu,\CURL,h}+&\int_\Omega \DIV (\mu\bvec{v})\,\Pgrad{k+1}{h}\dof{q}{h}-\int_{\Gamma^\compl}\mu\bvec{v}\cdot\normal_{\Omega}\,\tr{k+1}{\partial\Omega}\dof{q}{h}\Bigg|\\
    \lesssim{}&
    \left(\Appr{k}{\CURL,h}(\bvec{v})+\Bppr{k}{\DIV,h}(\mu\bvec{v})+\Tppr{k}{h}(\mu\bvec{v}\cdot\normal_\Omega)\right)\left(\tnorm{\GRAD,h}{\dof{q}{h}}+\tnorm{\CURL,h}{\uG{k}{h}\dof{q}{h}}\right).
  \end{aligned}
\end{equation*}
\end{remark}

\begin{remark}[Boundary terms for the adjoint consistency of the curl]\label{rem:asym.Tppr.div}
The analysis of the boundary contributions in \eqref{eq:adjoint.consistency.curl} is less straightforward than the one for \eqref{eq:adjoint.consistency.grad} in Remark \ref{rem:asym.Tppr}; the fundamental reason for this is that there is no $L^2$-trace inequality in $\Hcurl{\Omega}$ (and thus no uniform-in-$h$ such inequality in $\Xcurl{k}{\Th}$, contrary to $\Xgrad{k}{\Th}$).

Assume that $\mu\bvec{w}\times\normal_\Omega\in \bvec{H}^{k+1}(\FGc)$ (the bold font indicating, as before, that the vector field is tangent to its domain).
We first note that, since $\vPoly{k}(F)\subset\RT{k+1}{F}$ for all $F\in\Fhb$, the approximation properties of $\RTproj{k+1}{F}$ are at least as good as those of $\lproj{k}{F}$ (see \cite[Theorem 1.45]{Di-Pietro.Droniou:20}), and thus
\[
\Tppr{k}{\alpha,\DIV,h}(\mu\bvec{w}\times\normal_\Omega)\lesssim \left(\sum_{F\in\FGc}\alpha_F^{-1}h_F^{2(k+1)}\seminorm{H^{k+1}(F)}{\mu\bvec{w}\times\normal_\Omega}^2\right)^{1/2}.
\]
Two main choices can then be made for the weights $\alpha$.

\begin{itemize}
\item If $\alpha=(h_F)_{F\in \FGc}$, then
\[
\Tppr{k}{\alpha,\DIV,h}(\mu\bvec{w}\times\normal_\Omega)\lesssim h^{k+\frac12}\seminorm{H^{k+1}(\FGc)}{\mu\bvec{w}\times\normal_\Omega}.
\]
The convergence appears to be sub-optimal by a factor $1/2$ but this choice of $\alpha$ has the following benefit: if $F\in\FGc$ and $T\in\Th$
is the cell containing $F$ in its boundary, then $\alpha_F\tnorm{\CURL,F}{\vdof{v}{F}}^2=h_F\tnorm{\CURL,F}{\vdof{v}{F}}^2\le\tnorm{\CURL,T}{\vdof{v}{T}}^2$ by definition \eqref{eq:def.component.norm} of the components norm. The estimate \eqref{eq:adjoint.consistency.curl} then leads to
\begin{equation*}
\begin{aligned}
    \Bigg|({}&\qIdiv{k}{h}\bvec{w},\uC{k}{h}\vdof{v}{h})_{\mu,\DIV,h}-\int_\Omega \CURL (\mu\bvec{w})\cdot\Pcurl{k}{h}\vdof{v}{h}
    -\int_{\Gamma^\compl}(\mu\bvec{w}\times\normal_{\Omega})\cdot\trt{k}{\partial\Omega}\vdof{v}{h}\Bigg|\\
    {}&\lesssim
    \left(\Appr{k}{\DIV,h}(\bvec{w})+\Bppr{k}{\CURL,h}(\mu\bvec{w})+\Tppr{k}{\alpha,\DIV,h}(\mu\bvec{w}\times\normal_{\Omega})\right)\left(\tnorm{\CURL,h}{\vdof{v}{h}}+\tnorm{\DIV,h}{\uC{k}{h}\vdof{v}{h}}\right),
\end{aligned}
\end{equation*}
an estimate which does not rely on any boundary norm of the test function $\vdof{v}{h}$.

\item If $\alpha= (1)_{F\in\FGc}$, then we recover an optimal rate of convergence of the trace term
\[
\Tppr{k}{\alpha,\DIV,h}(\mu\bvec{w}\times\normal_{\Omega})\lesssim h^{k+1}\seminorm{H^{k+1}(\FGc)}{\mu\bvec{w}\times\normal_{\Omega}},
\]
but, in \eqref{eq:adjoint.consistency.curl}, the $L^2$-like trace norm $\sum_{F\in\FGc}\tnorm{\CURL,F}{\vdof{v}{F}}^2$
of the test function cannot be uniformly bounded in terms of $\tnorm{\CURL,h}{\vdof{v}{h}}+\tnorm{\DIV,h}{\uC{k}{h}\vdof{v}{h}}$ (that would amount
to having a continuous $L^2$-tangential trace inequality in $\Hcurl{\Omega}$).
\end{itemize}
\end{remark}

\section{Analysis of the quasi-interpolators}\label{sec:analysis.qI}

We prove here the results stated in Section \ref{sec:quasi-interpolator}.

\subsection{Proof of the bounded cochain map property (Theorem \ref{thm:qI.cochain})}\label{sec:qI.well.posed}

The well-posedness of the DDR quasi-interpolators, their commutation property and their boundedness
rely on the two diagrams in \eqref{eq:cochain.map.double}.
\begin{equation}\label{eq:cochain.map.double}
  \begin{tikzcd}[column sep=2em]
    &~\arrow[phantom]{d}{\DFE\;\Bigg\{\; }&[-1em] 
     \HgradG{\Omega}\arrow{r}{\GRAD}\arrow{d}{\qILag{\ell}{h}}
    & \HcurlG{\Omega}\arrow{r}{\CURL}\arrow{d}{\qINE{\ell}{h}}
    & \HdivG{\Omega}\arrow{r}{\DIV}\arrow{d}{\qIRT{\ell}{h}}
    &\L^2(\Omega)\arrow{d}{\lproj{\ell}{\Sh}}
    \\
    &~\arrow[phantom]{d}{\DDDR\;\Bigg\{\;\;\; }
    & \LagG{\ell}{\Sh}\arrow{r}{\GRAD}\arrow{d}{\Igrad{k}{h}}
    & \NEG{\ell}{\Sh}\arrow{r}{\CURL}\arrow{d}{\Icurl{k}{h}}
    & \RTG{\ell}{\Sh}\arrow{r}{\DIV}\arrow{d}{\Idiv{k}{h}}
    &\Poly{\ell}(\Sh)\arrow{d}{\lproj{k}{\Th}}
    \\
    &~
    & \XgradG{k}{\Th}\arrow{r}{\uG{k}{h}}
    & \XcurlG{k}{\Th}\arrow{r}{\uC{k}{h}}
    & \XdivG{k}{\Th}\arrow{r}{\D{k}{h}}
    &\Poly{k}(\Th)
  \end{tikzcd}
\end{equation}
The diagram $\DFE$, corresponding to the first two rows in \eqref{eq:cochain.map.double}, is the diagram of the finite element quasi-interpolators, and is therefore known to be well-defined and commutative
\cite{Chaumont-Frelet.Vohralik:24,Chaumont-Frelet.Vohralik:25,ern_gudi_smears_vohralik_2022a}.
Since the DDR quasi-interpolators \eqref{eq:def.qI} correspond to the composition of the vertical maps in \eqref{eq:cochain.map.double}, their well-posedness and commutation property are ensured if we justify that the bottom two rows, denoted by $\DDDR$, form a diagram of cochain maps.

As can be seen from their definition \eqref{eq:interpolators}, the standard DDR interpolators $\Igen{k}{h}$ are well-defined provided that their arguments are functions with suitable single-valued traces. Let us focus on the case $\gensymbol=\CURL$; the function $\bvec{v}$ to interpolate must have single-valued integrable tangent traces on all edges and faces, and must be integrable in each element. Let us consider $\bvec{v}\in\NE{\ell}{\Sh}$, which contains the domain of $\Icurl{k}{h}$ in \eqref{eq:cochain.map.double}. Since $\Sh$ is a matching submesh of $\Th$, any edge $E\in\Eh$ is the union of edges $e$ of simplices in $\Sh$; on each $e$, by definition of the Nédélec space, $\bvec{v}\cdot\tangent_e=\pm\bvec{v}\cdot\tangent_E$ (depending on the relative orientations of $E$ and $e$) has a well-defined polynomial single value. As a consequence, $\bvec{v}\cdot\tangent_E$ is piecewise polynomial on $E$, and therefore well-defined in $L^1(E)$.
Repeating the same argument for the faces in $\Fh$ (unions of faces of simplices in $\Sh$) and the elements in $\Th$ (unions of simplices in $\Sh$), we see that all the relevant traces and values of $\bvec{v}$ are well-defined, with enough regularity for $\Icurl{k}{h}\bvec{v}$ to be properly defined. The same argument can be applied to the other interpolators, showing that all maps in $\DDDR$ are well-defined.

On smooth enough functions, the DDR interpolators $\Igen{k}{h}$ commute with the differential operators; see \cite[Lemma 4]{Di-Pietro.Droniou:23} and \cite{Di-Pietro.Droniou.ea:20}. An inspection of the proofs of this commutation shows that the only required properties on the functions $z$ to interpolate are: (i) integrability of the appropriate traces (scalar, tangential or normal) of $z$ on the polytopal mesh entities (edges, faces), and (ii) integration-by-parts formulas for these traces of $z$ on the polytopal mesh entities. Considering the interpolators in $\DDDR$, we just checked (i) above, and (ii) follows immediately from the fact that, for $T\in\Th$, traces on a polytopal mesh entity $\sfP\in\ET$ or $\sfP\in\FT$ (depending on the trace and considered finite element) of a finite element function on $\Sh(T)$ are in the suitable finite element space on the trace on $\sfP$ of $\Sh(T)$, and therefore belong to the corresponding conforming space on $\sfP$.

This proves that $\DDDR$ is indeed a diagram of cochain maps, and thus that \eqref{eq:cochain.map} is also a diagram of cochain maps.

\medskip

To conclude the proof of Theorem \ref{thm:qI.cochain}, we need to establish the local bound \eqref{eq:local.qI.bound}.
Recall first that, for all $S\in\Sh$ and $z\in\HgenG{\patch{S}}$, the following bound on the finite element quasi-interpolator holds true
(see \cite[Eq.~(2.3)]{Chaumont-Frelet.Vohralik:25}, \cite[Eq.~(3.18)]{Chaumont-Frelet.Vohralik:24} and \cite[Eq.~(3.7)]{ern_gudi_smears_vohralik_2022a}):
\begin{equation}\label{eq:qI.FE.bound}
 \norm{L^2(S)}{\qIFEgen{\ell}{S}z}^2\lesssim \sum_{S'\in\patch{S}}\left(\norm{L^2(S')}{z}^2+h_{S'}^2\norm{L^2(S')}{\gensymbol z}^2\right).
\end{equation}
Let $z\in\HgenG{\patch{T}}$. Apply \eqref{eq:qI.FE.bound} to each $S\in\Sh(T)$ and sum over these simplices to get
\[
\norm{L^2(T)}{\qIFEgen{\ell}{T}z}^2\lesssim \sum_{S\in\Sh(T)}\sum_{S'\in\patch{S}}\left(\norm{L^2(S')}{z}^2+h_{S'}^2\norm{L^2(S')}{\gensymbol z}^2\right).
\] 
In the double sum $\sum_{S\in\Sh(T)}\sum_{S'\in\patch{S}}$, only simplices $S'$ in $\patch{T}$ appear, and each one of them appears $\lesssim 1$ times. Hence,
\[
\norm{L^2(T)}{\qIFEgen{\ell}{T}z}^2\lesssim \sum_{S'\in\patch{T}}\left(\norm{L^2(S')}{z}^2+h_T^2\norm{L^2(S')}{\gensymbol z}^2\right)
=\norm{L^2(\patch{T})}{z}^2+h_T^2\norm{L^2(\patch{T})}{\gensymbol z}^2.
\] 
Applying Lemma \ref{lem:bound.I.FE} below to $\xi=\qIFEgen{\ell}{T}z\in\FEgenG{\ell}{\Sh(T)}$ and invoking the inequality above yields \eqref{eq:local.qI.bound}.

\begin{lemma}[$L^2$-bound of DDR interpolators on FE spaces]\label{lem:bound.I.FE}
For all $T\in\Th$ and $\xi\in\FEgen{\ell}{\Sh(T)}$, it holds
\[
\tnorm{\gensymbol,T}{\Igen{k}{T}\xi}\lesssim \norm{L^2(T)}{\xi}.
\]
\end{lemma}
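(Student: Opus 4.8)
The plan is to bound each component of $\Igen{k}{T}\xi$ (the element, face, edge, and vertex projections appearing in the definition \eqref{eq:interpolators}) by $\norm{L^2(T)}{\xi}$, weighted by the appropriate power of $h_T$ as dictated by the components norm \eqref{eq:def.component.norm}, and then sum. First I would recall that $\xi$ is piecewise polynomial of degree $\le \ell$ on the simplices $\Sh(T)$, and that by mesh regularity the number of simplices in $\Sh(T)$ is uniformly bounded, each with diameter $\eqsim h_T$. The element-level component ($\lproj{k_\gensymbol-1}{T}\xi$ or $\RTproj{k}{T}\xi$, etc.) is an $L^2(T)$-orthogonal projection, hence its $L^2(T)$-norm is $\le\norm{L^2(T)}{\xi}$; this handles the leading term $\norm{L^2(T)}{z_T}^2$ directly. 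The remaining work is to control the traces of $\xi$ on the faces, edges, and vertices of $T$.

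The key tool is a sequence of discrete trace/inverse inequalities on the simplicial submesh. For a face $F\in\FT$: $F$ is the union of simplex faces $f$ (traces of $\Sh(T)$ onto $F$), and on each such $f$ the relevant trace of $\xi$ (scalar value, tangential component, or normal component depending on which finite element family $\gensymbol$ denotes) is a polynomial. Applying the standard discrete trace inequality $\norm{L^2(f)}{\xi|_S}\lesssim h_S^{-1/2}\norm{L^2(S)}{\xi}$ on each simplex $S\in\Sh(T)$ having $f$ as a face, and using $h_S\eqsim h_T$, gives $\norm{L^2(F)}{\text{(trace of }\xi)}^2\lesssim h_T^{-1}\norm{L^2(T)}{\xi}^2$; after applying the bounded projector $\lproj{k}{F}$ (or $\RTproj{k}{F}$) this only improves, so the contribution $h_T\norm{L^2(F)}{z_F}^2$ is $\lesssim\norm{L^2(T)}{\xi}^2$. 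Iterating the trace inequality once more (simplex $\to$ face $\to$ edge) yields $\norm{L^2(E)}{z_E}^2\lesssim h_T^{-2}\norm{L^2(T)}{\xi}^2$, so $h_F h_T\norm{L^2(E)}{z_E}^2\lesssim h_T^{-1}\cdot h_T\cdot h_T^{-2}\cdot h_T^2\cdots$ — more carefully, the factor $h_T\cdot h_F\eqsim h_T^2$ cancels the $h_T^{-2}$, giving $\lesssim\norm{L^2(T)}{\xi}^2$. For the vertex values (only present when $\gensymbol=\GRAD$), one more step (simplex $\to$ face $\to$ edge $\to$ vertex, each costing a factor $h_T^{-1/2}$ in $L^\infty$ versus $L^2$, or equivalently using $|\xi(\bvec{x}_V)|\lesssim h_T^{-3/2}\norm{L^2(S)}{\xi}$ for a simplex $S$ touching $V$) shows $|z_V|^2\lesssim h_T^{-3}\norm{L^2(T)}{\xi}^2$, and the weight $h_T^2 h_T\eqsim h_T^3$ in \eqref{eq:def.component.norm} exactly absorbs this.

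The main (only real) obstacle is bookkeeping: making sure the powers of $h_T$ from the nested trace inequalities match the weights built into the components norm \eqref{eq:def.component.norm}, and checking that $h_F\eqsim h_T$ and $h_E\eqsim h_T$ for all faces/edges of $T$ (which follows from mesh regularity of $\Mh$ and of the submesh $\MSh$, since the submesh regularity factor is bounded below by that of $\Mh$). There is no genuine difficulty: all the trace and inverse inequalities used are classical for polynomials on shape-regular simplices, the projectors $\lproj{\ell}{\sfP}$, $\RTproj{\ell}{\sfP}$, $\NEproj{\ell}{\sfP}$ are $L^2$-stable by construction, and the number of mesh entities of $T$ in $\Sh(T)$ is uniformly bounded. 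Summing the finitely many (uniformly bounded in number) contributions then yields $\tnorm{\gensymbol,T}{\Igen{k}{T}\xi}^2\lesssim\norm{L^2(T)}{\xi}^2$, which is the claim.
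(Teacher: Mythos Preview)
Your proposal is correct and follows essentially the same route as the paper: expand the components norm \eqref{eq:def.component.norm} via the definition \eqref{eq:interpolators} of $\Igen{k}{T}$, use $L^2$-stability of the orthogonal projectors, and then control the face/edge/vertex traces by iterated discrete trace inequalities for piecewise polynomials on the shape-regular simplicial submesh $\Sh(T)$. The paper only spells out the case $\gensymbol=\CURL$ and declares the other two ``identical'', while you sketch all three uniformly (including the vertex case for $\gensymbol=\GRAD$), but the argument is the same; your momentary stumble in the edge-weight bookkeeping is immediately self-corrected and the scaling is right.
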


\begin{proof}
To fix the ideas we detail the proof in the case $\gensymbol=\CURL$, the other two cases being identical.
The arguments in the first part of the proof of Theorem \ref{thm:qI.cochain} above show that $\Icurl{k}{T}\xi$ is well-defined
for all $\xi\in\NE{\ell}{\Sh(T)}$, and the definitions \eqref{eq:interpolators} and \eqref{eq:def.component.norm} of this interpolate and of the components norm give
\begin{align*}
\tnorm{\CURL,T}{\Icurl{k}{T}\xi}^2 \lesssim&\norm{L^2(T)}{\RTproj{k}{T}\xi}^2
+h_T\sum_{F\in\FT}\norm{L^2(F)}{\RTproj{k}{F}\xi_{{\rm t},F}}^2+h_T^2\sum_{E\in\ET}\norm{L^2(E)}{\lproj{k}{E}(\xi\cdot\tangent_E)}^2\nonumber\\
\le{}& \norm{L^2(T)}{\xi}^2
+h_T\sum_{F\in\FT}\norm{L^2(F)}{\xi}^2
+h_T^2\sum_{E\in\ET}\norm{L^2(E)}{\xi}^2,
\end{align*}
where the second inequality follows by boundedness of $L^2$-projectors and the fact that the norms of tangential components
are bounded above by the norms of the full vector field. Each $E\in\ET$ (resp.~$F\in\FT$) is the union of edges (resp.~faces) of simplices in $\Sh(T)$, which is a regular mesh with $\lesssim 1$ elements on which $\xi$ is piecewise polynomial. Applying the discrete trace inequality in \cite[Lemma 1.32]{Di-Pietro.Droniou:20} on each of these simplicial edges (resp.~faces) and summing the results yields 
\[
 h_T\sum_{F\in\FT}\norm{L^2(F)}{\xi}^2
+h_T^2\sum_{E\in\ET}\norm{L^2(E)}{\xi}^2\lesssim  \norm{L^2(T)}{\xi}^2,
\]
which concludes the proof.
\end{proof}

\subsection{Consistency properties of the quasi-interpolators}\label{sec:proof.consistency}

This section is dedicated to the proof of Theorems \ref{thm:qI.primal.consistency} and \ref{thm:qI.adjoint.consistency}.
We first note the following approximation property on the finite element quasi-interpolators, which is a consequence of
\cite[Eq.~(2.3)]{Chaumont-Frelet.Vohralik:25}, \cite[Eq.~(3.18)]{Chaumont-Frelet.Vohralik:24} and \cite[Eq.~(3.7)]{ern_gudi_smears_vohralik_2022a} together with the fact that $\ell\ge k+1$ (so that $\Poly{k_\gensymbol}(S)\subseteq\FEgen{\ell}{S}$):
for $\gensymbol\in\{\GRAD,\CURL,\DIV\}$, $S\in\Sh$ and $z\in\HgenG{\patch{S}}$, we have
\[
\norm{L^2(S)}{z-\qIFEgen{\ell}{S}z}^2 
\lesssim
\sum_{S'\in\patch{S}}\left(\norm{L^2(S')}{z-\lproj{k_\gensymbol}{S'}z}^2+h_{S'}^2\norm{L^2(S')}{(\gensymbol z)-\lproj{k_\gensymbol}{S'}(\gensymbol z)}^2\right).
\]
For $z\in\HgenG{\ppatch{T}}$, summing these estimates over $S\in\Sh(T)$ and noticing that all the simplices $S'$ that appear in the right-hand side belong to $\patch{T}$, and that each one appears $\lesssim 1$ times, we first infer
\[
\norm{L^2(T)}{z-\qIFEgen{\ell}{T}z}^2 
\lesssim
\sum_{S'\in\patch{T}}\left(\norm{L^2(S')}{z-\lproj{k_\gensymbol}{S'}z}^2+h_{S'}^2\norm{L^2(S')}{(\gensymbol z)-\lproj{k_\gensymbol}{S'}(\gensymbol z)}^2\right).
\]
The sum $\sum_{S'\in\patch{T}}$ can be bounded above by $\sum_{T'\in\ppatch{T}}\sum_{S'\in\Sh(T')}$. Moreover, if $S'\in \Sh(T')$ then, for $Z=z$ or $Z=\gensymbol z$ we have $\norm{L^2(S')}{Z-\lproj{k_\gensymbol}{S'}Z}\le \norm{L^2(S')}{Z-\lproj{k_\gensymbol}{T'}Z}$ (since $\lproj{k_\gensymbol}{S'}Z$ is the best $L^2(S')$-approximation of degree $k_\gensymbol$ of $Z$, and $(\lproj{k_\gensymbol}{T'}Z)|_{S'}$ is another such approximation). Combining these remarks together and recalling \eqref{eq:gen.approx.error}, we finally obtain
\begin{equation}\label{eq:qI.FE.approx}
\norm{L^2(T)}{z-\qIFEgen{\ell}{T}z}
\lesssim \Appr{k}{\gensymbol,T}(z).
\end{equation}

\subsubsection{Proof of the primal consistency (Theorem \ref{thm:qI.primal.consistency})}\label{sec:proof.primal.consistency}

Let us start with the approximation properties \eqref{eq:consistency.Pgen} of the potential reconstruction. For $z\in\HgenG{\ppatch{T}}$ we write, on $T$,
\[
z-\Pgen{k}{T}\qIgen{k}{T}z=z-\lproj{k_\gensymbol}{T}z+\lproj{k_\gensymbol}{T}z-\Pgen{k}{T}\Igen{k}{T}\qIFEgen{\ell}{T}z
=z-\lproj{k_\gensymbol}{T}z+\Pgen{k}{T}\Igen{k}{T}(\lproj{k_\gensymbol}{T}z-\qIFEgen{\ell}{T}z)
\]
where, in the last equality, we have used the polynomial consistency of $\Pgen{k}{T}$ (see \cite[Eqs.~(4.2), (4.7), (4.12)]{Di-Pietro.Droniou:23}) to write $\Pgen{k}{T}\Igen{k}{T}\lproj{k_\gensymbol}{T}z=\lproj{k_\gensymbol}{T}z$. We then take the $L^2(T)$-norm and use the triangle inequality to obtain
\begin{align*}
\norm{L^2(T)}{z-\Pgen{k}{T}\qIgen{k}{T}z}&\le
\norm{L^2(T)}{z-\lproj{k_\gensymbol}{T}z}+\norm{L^2(T)}{\Pgen{k}{T}\Igen{k}{T}(\lproj{k_\gensymbol}{T}z-\qIFEgen{\ell}{T}z)}\\
\overset{\eqref{eq:bound.Pgen.sTgen}}&\lesssim
\norm{L^2(T)}{z-\lproj{k_\gensymbol}{T}z}+\tnorm{\gensymbol,T}{\Igen{k}{T}(\lproj{k_\gensymbol}{T}z-\qIFEgen{\ell}{T}z)}.
\end{align*}
We then invoke Lemma \ref{lem:bound.I.FE} with $\xi=\lproj{k_\gensymbol}{T}z-\qIFEgen{\ell}{T}z\in\FEgen{\ell}{\Sh(T)}$ (recall that $\Poly{k_\gensymbol}(S)\subseteq\FEgen{\ell}{S}$ since $\ell\geq k+1$) to infer
\begin{align*}
\norm{L^2(T)}{z-\Pgen{k}{T}\qIgen{k}{T}z}&\lesssim
\norm{L^2(T)}{z-\lproj{k_\gensymbol}{T}z}+\norm{L^2(T)}{\lproj{k_\gensymbol}{T}z-\qIFEgen{\ell}{T}z}\\
&\lesssim \norm{L^2(T)}{z-\lproj{k_\gensymbol}{T}z}+\norm{L^2(T)}{z-\qIFEgen{\ell}{T}z},
\end{align*}
the second inequality following by introducing $\pm z$ in the second term above and by using the triangle inequality.
Recalling \eqref{eq:qI.FE.approx} concludes the proof of \eqref{eq:consistency.Pgen}.

\medskip

To establish the consistency property \eqref{eq:consistency.sTgen} of the stabilisation bilinear form, we write
\begin{align*}
s_{\gensymbol,T}\left(\qIgen{k}{T}z,\qIgen{k}{T}z\right)^{1/2}&=
s_{\gensymbol,T}\left(\Igen{k}{T}\qIFEgen{\ell}{T}z,\Igen{k}{T}\qIFEgen{\ell}{T}z\right)^{1/2}\\
\overset{\eqref{eq:polynomial.consistency.sT}}&=
s_{\gensymbol,T}\left(\Igen{k}{T}(\qIFEgen{\ell}{T}z-\lproj{k_\gensymbol}{T}z),\Igen{k}{T}(\qIFEgen{\ell}{T}z-\lproj{k_\gensymbol}{T}z)\right)^{1/2}\\
\overset{\eqref{eq:bound.Pgen.sTgen}}&\lesssim
\tnorm{\gensymbol,T}{\Igen{k}{T}(\qIFEgen{\ell}{T}z-\lproj{k_\gensymbol}{T}z)}.
\end{align*}
The conclusion then follows as above, by invoking Lemma \ref{lem:bound.I.FE} and the approximation property \eqref{eq:qI.FE.approx}.

\medskip

The consistency \eqref{eq:consistency.innergen} is a direct consequence of the definition \eqref{eq:def.L2.inner} of the local inner product, the consistencies \eqref{eq:consistency.Pgen} and \eqref{eq:consistency.sTgen} of the potential reconstruction and the stabilisation bilinear form, the definition \eqref{eq:def.L2.norm} of the local $L^2$-like norm, and the fact that $\mu$ is bounded.

\subsubsection{Proof of the adjoint consistency (Theorem \ref{thm:qI.adjoint.consistency})}\label{sec:proof.adjoint.consistency}

\emph{(i) Adjoint consistency for the gradient.}

Let $\mathcal{E}_{\GRAD,h}(\bvec{v},\dof{q}{h})$ be the argument of the absolute value in the left-hand side of \eqref{eq:adjoint.consistency.grad}. We first note that
\begin{equation}\label{eq:consistency.grad.0}
\begin{aligned}
\mathcal{E}_{\GRAD,h}(\bvec{v},\dof{q}{h}) ={}& \sum_{T\in\Th}\left(\int_T \mu\bvec{v}\cdot\Pcurl{k}{T}\uG{k}{T}\dof{q}{T}
+\int_T \DIV (\mu\bvec{v})\,\Pgrad{k+1}{T}\dof{q}{T}\right)+\term_0\\
&-\sum_{T\in\Th}\sum_{F\in\FT\cap\FGc}\omega_{TF}\int_F\mu\bvec{v}\cdot\normal_F\,\tr{k+1}{F}\dof{q}{F},
\end{aligned}
\end{equation}
where
\begin{equation}\label{eq:consistency.grad.term0}
|\term_0|=\left|\sum_{T\in\Th}\left((\qIcurl{k}{T}\bvec{v},\uG{k}{T}\dof{q}{T})_{\mu,\CURL,T}-\int_T  \mu\bvec{v}\cdot\Pcurl{k}{T}\uG{k}{T}\dof{q}{T}\right)\right|
\overset{\eqref{eq:consistency.innergen}}\lesssim \Appr{k}{\CURL,h}(\bvec{v})\norm{\mu,\CURL,h}{\uG{k}{h}\dof{q}{h}}.
\end{equation}

We then recall the following relation \cite[Remark 17]{Di-Pietro.Droniou:23} (see also \cite[Eq.~(4.29)]{Di-Pietro.Droniou:23} to express as $\Pcurl{k}{T}\uG{k}{T}$ the full gradient reconstruction $\boldsymbol{\mathsf{G}}_T^k$ appearing in this relation):
\[
 \int_T\bvec{z}_T\cdot\Pcurl{k}{T}\uG{k}{T}\dof{q}{T}+\int_T\DIV\bvec{z}_T\,\Pgrad{k+1}{T}\dof{q}{T} - \sum_{F\in\FT}\omega_{TF}\int_F
\bvec{z}_T\cdot\normal_F\tr{k+1}{F}\dof{q}{F}=0\quad\forall\bvec{z}_T\in\RT{k+1}{T}.
\]
Subtracting the relations above in each element $T$ from \eqref{eq:consistency.grad.0}, we obtain
\begin{equation}
\mathcal{E}_{\GRAD,h}(\bvec{v},\dof{q}{h})=\term_0+\term_1+\term_2,
\label{eq:consistency.grad.1}
\end{equation}
with $\term_1$ gathering the volumetric terms and $\term_2$ the interface and boundary terms:
\begin{align*}
\term_1={}&
\sum_{T\in\Th}\int_T \left(\mu\bvec{v}-\bvec{z}_T\right)\cdot\Pcurl{k}{T}\uG{k}{T}\dof{q}{T}
+\sum_{T\in\Th}\int_T\DIV(\mu\bvec{v}-\bvec{z}_T)\Pgrad{k+1}{T}\dof{q}{T},\\
\term_2={}&\sum_{T\in\Th}\sum_{F\in\FT}\omega_{TF}\int_F  \bvec{z}_T\cdot\normal_F\,\tr{k+1}{F}\dof{q}{F}
-\sum_{T\in\Th}\sum_{F\in\FT\cap\FGc}\omega_{TF}\int_F \mu\bvec{v}\cdot\normal_F\,\tr{k+1}{F}\dof{q}{F},
\end{align*}
for any family $(\bvec{z}_T)_{T\in\Th}\in\bigtimes_{T\in\Th}\RT{k+1}{T}$.
Using Cauchy--Schwarz inequalities together with \eqref{eq:bound.Pgen.sTgen} to write $\norm{L^2(T)}{\Pcurl{k}{T}\uG{k}{T}\dof{q}{T}} \lesssim \tnorm{\CURL,T}{\uG{k}{T}\dof{q}{T}}$ and $\norm{L^2(T)}{\Pgrad{k+1}{T}\dof{q}{T}} \lesssim \tnorm{\GRAD,T}{\dof{q}{T}}$, we have
\begin{equation}\label{eq:consistency.grad.term}
|\term_1|
\lesssim{}
\left[\sum_{T\in\Th}\left(\norm{L^2(T)}{\mu\bvec{v}-\bvec{z}_T}^2+\norm{L^2(T)}{\DIV(\mu\bvec{v}-\bvec{z}_T)}^2\right)\right]^{1/2}\left(\tnorm{\GRAD,h}{\dof{q}{h}}+\tnorm{\CURL,h}{\uG{k}{h}\dof{q}{h}}\right).
\end{equation}
We now turn to $\term_2$. By \cite[Theorem 3 and Remark 5]{Di-Pietro.Droniou.ea:25} (take dimension $n=3$ and form degree $k=0$ in this reference), there exists a lifting
$\Rgrad:\Xgrad{k}{\Th}\to \Hgrad{\Omega}$ such that (\footnote{The reference \cite{Di-Pietro.Droniou.ea:25} actually considers a trace reconstruction $\tr{k}{F}\dof{q}{F}$ of degree $k$ (not $k+1$ as in \cite{Di-Pietro.Droniou:23} and here) and establishes that $\lproj{k}{F}\Rgrad\dof{q}{h}=\tr{k}{F}\dof{q}{F}$. Since it can be easily checked from the definitions in these references that $\lproj{k}{F}\tr{k+1}{F}\dof{q}{F}=\tr{k}{F}\dof{q}{F}$, this proves that \eqref{eq:lift.Rgrad.proj} holds true.}), 
\begin{align}
\label{eq:lift.Rgrad.proj}
&\lproj{k}{F}\Rgrad\dof{q}{h}=\lproj{k}{F}\tr{k+1}{F}\dof{q}{F}\qquad\forall F\in\Fh,\\
\label{eq:lift.Rgrad.trace.norm}
&\norm{L^2(F)}{\Rgrad\dof{q}{h}}\lesssim\tnorm{\GRAD,F}{\dof{q}{F}}\qquad\forall F\in\Fh,\\
\label{eq:lift.Rgrad.norm}
&\norm{L^2(\Omega)}{\Rgrad\dof{q}{h}}\lesssim\tnorm{\GRAD,h}{\dof{q}{h}}
\quad\text{ and }\quad
\norm{L^2(\Omega)}{\GRAD\Rgrad\dof{q}{h}}\lesssim\tnorm{\CURL,h}{\uG{k}{h}\dof{q}{h}}.
\end{align}
Since $\dof{q}{h}\in\XgradG{k}{\Th}$, the relation \eqref{eq:lift.Rgrad.trace.norm} shows that $\Rgrad\dof{q}{h}\in\HgradG{\Omega}$.

By \cite[Proposition 8]{Di-Pietro.Droniou:23}, we have $\bvec{z}_T\cdot\normal_F\in \Poly{k}(F)$, so the definition of $\lproj{k}{F}$ and the relation \eqref{eq:lift.Rgrad.proj} allow us to write
\begin{align}
\term_2={}&\sum_{T\in\Th}\sum_{F\in\FT}\omega_{TF}\int_F  \bvec{z}_T\cdot\normal_F\,\Rgrad\dof{q}{h}
-\sum_{T\in\Th}\sum_{F\in\FT\cap\FGc}\omega_{TF}\int_F\mu\bvec{v}\cdot\normal_F\, \tr{k+1}{F}\dof{q}{F}\nonumber\\
={}&\sum_{T\in\Th}\langle  \bvec{z}_T\cdot\normal_T,\Rgrad\dof{q}{h}\rangle_{H^{-1/2}(\partial T),H^{1/2}(\partial T)}
-\int_{\Gamma^\compl} \mu\bvec{v}\cdot\normal_\Omega\,\Rgrad\dof{q}{h}+\term_3,
\label{eq:consistency.grad.11}
\end{align}
where $\normal_T$, such that $(\normal_T)|_F\coloneq \omega_{TF}\normal_F$ for all $F\in\FT$, is the outer unit normal to $\partial T$, and
\begin{align}
|\term_3| &= \left|\sum_{T\in\Th}\sum_{F\in\FT\cap\FGc}\omega_{TF}\int_F \mu\bvec{v}\cdot\normal_F\,\big(\Rgrad\dof{q}{h}-\tr{k+1}{F}\dof{q}{F}\big)\right|\nonumber\\
\overset{\eqref{eq:lift.Rgrad.proj}}&= \left|\sum_{T\in\Th}\sum_{F\in\FT\cap\FGc}\omega_{TF}\int_F [(\mu\bvec{v}\cdot\normal_F)-\lproj{k}{F}(\mu\bvec{v}\cdot\normal_F)]\,[\Rgrad\dof{q}{h}-\tr{k+1}{F}\dof{q}{F}]\right|\nonumber\\
\overset{\eqref{eq:def.Tppr.1}}&
\lesssim \Tppr{k}{h}(\mu\bvec{v}\cdot\normal_\Omega) \Bigg(\sum_{F\in\FGc}\tnorm{\GRAD,F}{\dof{q}{F}}^2\Bigg)^{1/2},
\label{eq:consistency.gradient.T3}
\end{align}
where, in the conclusion, we have additionally used a Cauchy--Schwarz inequality, the relation \eqref{eq:lift.Rgrad.trace.norm}, as well as $\norm{L^2(F)}{\tr{k+1}{F}\dof{q}{F}}\lesssim \tnorm{\GRAD,F}{\dof{q}{F}}$ (see \cite[Proposition 6]{Di-Pietro.Droniou:23}).
Since $\Rgrad\dof{q}{h}\in \Hgrad{\Omega}$ and $\mu\bvec{v}\in \Hdiv{\Omega}$, integrations-by-parts in each $T\in\Th$ give
\begin{align*}
\sum_{T\in\Th}\langle\mu\bvec{v}\cdot\normal_T,\Rgrad\dof{q}{h}\rangle_{H^{-1/2}(\partial T),H^{1/2}(\partial T)}={}&
\sum_{T\in\Th}\left(\int_T \big(\mu\bvec{v}\cdot\GRAD\Rgrad\dof{q}{h}+ \DIV(\mu\bvec{v})\Rgrad\dof{q}{h}\big)\right)\\
={}&\int_\Omega \big(\mu\bvec{v}\cdot\GRAD\Rgrad\dof{q}{h}+ \DIV(\mu\bvec{v})\Rgrad\dof{q}{h}\big)\\
={}&\int_{\Gamma^\compl} \mu\bvec{v}\cdot\normal_\Omega\,\Rgrad\dof{q}{h},
\end{align*}
the conclusion following from an integration-by-parts in $\Omega$ together with $\Rgrad\dof{q}{h}\in\HgradG{\Omega}$ and the assumption that $\mu\bvec{v}\cdot\normal_\Omega\in L^2(\Gamma^\compl)$. Using this relation to substitute the boundary term in  \eqref{eq:consistency.grad.11} leads to
\begin{equation*}
\term_2
=\sum_{T\in\Th}\langle (\bvec{z}_T-\mu\bvec{v})\cdot\normal_T, \Rgrad\dof{q}{h}\rangle_{H^{-1/2}(\partial T),H^{1/2}(\partial T)}+\term_3.
\end{equation*}
We then apply element-wise integration-by-parts to get
\begin{equation*} 
\term_2
=\sum_{T\in\Th}\left(\int_T\DIV(\bvec{z}_T-\mu\bvec{v})\,\Rgrad\dof{q}{h}+\int_T(\bvec{z}_T-\mu\bvec{v})\cdot \GRAD\Rgrad\dof{q}{h} \right)
+\term_3.
\end{equation*}
Cauchy--Schwarz inequalities then yield
\begin{align*}
|\term_2|\lesssim |\term_3|+{}&\left[\sum_{T\in\Th}\left(\norm{L^2(T)}{\bvec{z}_T-\mu\bvec{v}}^2+\norm{L^2(T)}{\DIV(\bvec{z}_T-\mu\bvec{v})}^2\right)\right]^{1/2}\\
&\times\left(\norm{L^2(\Omega)}{\Rgrad\dof{q}{h}}+\norm{L^2(\Omega)}{\GRAD\Rgrad\dof{q}{h}}\right).
\end{align*}
Invoking the bounds \eqref{eq:lift.Rgrad.norm} and \eqref{eq:consistency.gradient.T3}, we obtain
\begin{align*}
|\term_2|
\lesssim{}&\Tppr{k}{h}(\mu\bvec{v}\cdot\normal_\Omega)\left(\sum_{F\in\FGc}\tnorm{\GRAD,F}{\dof{q}{F}}^2\right)^{1/2}\\
&+\left[\sum_{T\in\Th}\left(\norm{L^2(T)}{\bvec{z}_T-\mu\bvec{v}}^2+\norm{L^2(T)}{\DIV(\bvec{z}_T-\mu\bvec{v})}^2\right)\right]^{1/2}
\left(\tnorm{\GRAD,h}{\dof{q}{h}}+\tnorm{\CURL,h}{\uG{k}{h}\dof{q}{h}}\right).
\end{align*}
The proof of \eqref{eq:adjoint.consistency.grad} is concluded by plugging this estimate together with \eqref{eq:consistency.grad.term0} (along with~\eqref{eq:coercivity.sT}) and \eqref{eq:consistency.grad.term} into \eqref{eq:consistency.grad.1} and by selecting $(\bvec{z}_T)_{T\in\Th}$ that realise the minima in $\Bppr{k}{\DIV,h}(\mu\bvec{v})$ (see \eqref{eq:Bppr.gen}).

\medskip

\noindent\emph{(ii) Adjoint consistency for the curl}.

We reason in a similar way as for the gradient. With $\mathcal E_{\CURL,h}(\bvec{w},\vdof{v}{h})$ denoting the argument of the absolute value in the left-hand side of \eqref{eq:adjoint.consistency.curl}, we use the consistency \eqref{eq:consistency.innergen} of the local $L^2$-like inner products to write
\begin{equation}\label{eq:consistency.curl.0}
  \begin{aligned}
    \mathcal E_{\CURL,h}(\bvec{w},\vdof{v}{h})={}&\sum_{T\in\Th}\int_T \left(\mu\bvec{w}\cdot\Pdiv{k}{T}\uC{k}{T}\vdof{v}{T}-\int_T\CURL(\mu\bvec{w})\cdot\Pcurl{k}{T}\vdof{v}{T}\right)+\term_0\\
    &-\sum_{T\in\Th}\sum_{F\in\FT\cap\FGc}\omega_{TF}\int_F (\mu\bvec{w}\times\normal_F)\cdot\trt{k}{F}\vdof{v}{F}
  \end{aligned}
\end{equation}
with
\begin{equation}\label{eq:consistency.curl.term0}
|\term_0|\lesssim \Appr{k}{\DIV,h}(\bvec{w})\norm{\mu,\DIV,h}{\uC{k}{h}\vdof{v}{h}}.
\end{equation}

We then recall that \cite[Remark 18]{Di-Pietro.Droniou:23}
\[
\int_T \bvec{z}_T\cdot\Pdiv{k}{T}\uC{k}{T}\vdof{v}{T}-\int_T \CURL\bvec{z}_T\cdot \Pcurl{k}{T}\vdof{v}{T}
-\sum_{F\in\FT}\omega_{TF}\int_F (\bvec{z}_T\times\normal_F)\cdot \trt{k}{F}\vdof{v}{F}=0\quad\forall\bvec{z}_T\in\NE{k+1}{T}.
\]
Subtracting these equations to \eqref{eq:consistency.curl.0} yields
\begin{equation}\label{eq:consistency.curl.1}
	\mathcal E_{\CURL,h}(\bvec{w},\vdof{v}{h})=\term_0+\term_1+\term_2,
\end{equation}
where
\begin{align*}
\term_1={}&\sum_{T\in\Th}\int_T \left((\mu\bvec{w}-\bvec{z}_T)\cdot\Pdiv{k}{T}\uC{k}{T}\vdof{v}{T}-\int_T\CURL(\mu\bvec{w}-\bvec{z}_T)\cdot\Pcurl{k}{T}\vdof{v}{T}\right),\\
\term_2={}&\sum_{T\in\Th}\sum_{F\in\FT}\omega_{TF}\int_F (\bvec{z}_T\times\normal_F)\cdot \trt{k}{F}\vdof{v}{F}
-\sum_{T\in\Th}\sum_{F\in\FT\cap\FGc}\omega_{TF}\int_F (\mu\bvec{w}\times\normal_F)\cdot\trt{k}{F}\vdof{v}{F}.
\end{align*}
The term $\term_1$ is easy to estimate from Cauchy--Schwarz inequalities and~\eqref{eq:bound.Pgen.sTgen}:
\begin{equation}
\label{eq:consistency.curl.term1}
|\term_1|\lesssim \left[\sum_{T\in\Th}\left(\norm{L^2(T)}{\mu\bvec{w}-\bvec{z}_T}^2+\norm{L^2(T)}{\CURL(\mu\bvec{w}-\bvec{z}_T)}^2\right)\right]^{1/2}\left(\tnorm{\CURL,h}{\vdof{v}{h}}+\tnorm{\DIV,h}{\uC{k}{h}\vdof{v}{h}}\right).
\end{equation}
For $\term_2$, we recall the existence of a lifting $\Rcurl:\Xcurl{k}{\Th}\to\Hcurl{\Omega}$ that satisfies the following properties (see \cite[Section 6.5 and Appendix B]{Di-Pietro.Droniou:23} or \cite[Theorem 3 and Remark 5]{Di-Pietro.Droniou.ea:25}, with dimension $n=3$ and form degree $k=1$ in the latter reference):
\begin{align}
\label{eq:Rcurl.trace}
&
  (\Rcurl\vdof{v}{h})_{{\rm t},F}\in \bvec{L}^2(F) \quad\text{ and }\quad
    \RTproj{k+1}{F}(\Rcurl\vdof{v}{h})_{{\rm t},F}=\trt{k}{F}\vdof{v}{F}\qquad\forall F\in\Fh,\\
\label{eq:Rcurl.trace.norm}
&\norm{L^2(F)}{(\Rcurl\vdof{v}{h})_{{\rm t},F}}\lesssim\tnorm{\CURL,F}{\vdof{v}{F}}\qquad\forall F\in\Fh,\\
\label{eq:Rcurl.norm}
&\norm{L^2(\Omega)}{\Rcurl\vdof{v}{h}}\lesssim\tnorm{\CURL,h}{\vdof{v}{h}}
\quad\text{ and }\quad
\norm{L^2(\Omega)}{\CURL\Rcurl\vdof{v}{h}}\lesssim \tnorm{\DIV,h}{\uC{k}{h}\vdof{v}{h}}.
\end{align}
By \eqref{eq:Rcurl.trace.norm}, since $\vdof{v}{h}\in\XcurlG{k}{\Th}$, we have $\Rcurl\vdof{v}{h}\in\HcurlG{\Omega}$.
Noting that $\bvec{z}_T\times\normal_F\in\RT{k+1}{F}$ (see \cite[Proposition 8]{Di-Pietro.Droniou.ea:23}), we use the definition of $\RTproj{k+1}{F}$ and the relation \eqref{eq:Rcurl.trace}, along with the fact that $(\Rcurl\vdof{v}{h})_{{\rm t},\partial\Omega}\in \bvec{L}^2(\partial\Omega)$ (again by \eqref{eq:Rcurl.trace}), to write
\begin{align}
\term_2 ={}& \sum_{T\in\Th}\sum_{F\in\FT}\omega_{TF}\int_F (\bvec{z}_T\times\normal_F)\cdot (\Rcurl\vdof{v}{h})_{{\rm t},F}
-\sum_{T\in\Th}\sum_{F\in\FT\cap\FGc}\omega_{TF}\int_F (\mu\bvec{w}\times\normal_F)\cdot\trt{k}{F}\vdof{v}{F}\nonumber\\
={}&\sum_{T\in\Th}\langle \bvec{z}_T\times\normal_T, (\Rcurl\vdof{v}{h})_{{\rm t},\partial T}\rangle_{H^{-1/2}_\parallel(\partial T),H^{1/2}_\parallel(\partial T)} - \int_{\Gamma^\compl} (\mu\bvec{w}\times\normal_\Omega) \cdot (\Rcurl\vdof{v}{h})_{{\rm t},\partial\Omega}+\term_3,
\label{eq:consistency.curl.term2}
\end{align}
where $\langle\cdot,\cdot\rangle_{H^{-1/2}_\parallel(\partial T),H^{1/2}_\parallel(\partial T)}$ is the duality product between $H^{1/2}_\parallel(\partial T)$ and its dual space (see \cite[Section 3.1]{Assous.Ciarlet.ea:18}), and
\begin{align}
|\term_3| &= \left|\sum_{T\in\Th}\sum_{F\in\FT\cap\FGc}\omega_{TF}\int_F (\mu\bvec{w}\times\normal_F)\cdot[(\Rcurl\vdof{v}{h})_{{\rm t},F}-\trt{k}{F}\vdof{v}{F}]\right|\nonumber\\
\overset{\eqref{eq:Rcurl.trace}}&= \left|\sum_{T\in\Th}\sum_{F\in\FT\cap\FGc}\omega_{TF}\int_F [(\mu\bvec{w}\times\normal_F)-\RTproj{k+1}{F}(\mu\bvec{w}\times\normal_F)]\cdot[(\Rcurl\vdof{v}{h})_{{\rm t},F}-\trt{k}{F}\vdof{v}{F}]\right|\nonumber\\
\overset{\eqref{eq:def.Tppr.2}}&\lesssim 
\Tppr{k}{\alpha,\DIV,h}(\mu\bvec{w}\times\normal_\Omega)\Bigg(\sum_{F\in\FGc}\alpha_F\tnorm{\CURL,F}{\vdof{v}{F}}^2\Bigg)^{1/2},
\label{eq:consistency.curl.term3}
\end{align}
where we have used, in the conclusion, weighted Cauchy--Schwarz inequalities, the bound \eqref{eq:Rcurl.trace.norm}, and $\norm{L^2(F)}{\trt{k}{F}\vdof{v}{F}}\lesssim \tnorm{\CURL,F}{\vdof{v}{F}}$ (see \cite[Proposition 6]{Di-Pietro.Droniou:23}).

Since $\mu\bvec{w}$ and $\Rcurl\vdof{v}{h}$ belong to $\Hcurl{\Omega}$, local integration-by-parts yields
\begin{align*}
\sum_{T\in\Th}\langle \mu\bvec{w}\times\normal_T, (\Rcurl\vdof{v}{h})_{{\rm t},\partial T}\rangle_{H^{-1/2}_\parallel(\partial T),H^{1/2}_\parallel(\partial T)}
={}&
\sum_{T\in\Th}\int_T \left(\mu\bvec{w}\cdot\CURL \Rcurl\vdof{v}{h} -\CURL(\mu\bvec{w})\cdot \Rcurl\vdof{v}{h}\right)\\
={}&
\int_\Omega \left(\mu\bvec{w}\cdot\CURL \Rcurl\vdof{v}{h} -\CURL(\mu\bvec{w})\cdot \Rcurl\vdof{v}{h}\right)\\
={}&
\int_{\Gamma^\compl}(\mu\bvec{w}\times\normal_\Omega)\cdot (\Rcurl\vdof{v}{h})_{{\rm t},\partial \Omega},
\end{align*}
where the conclusion follows from an integration-by-parts over $\Omega$, together with $\Rcurl\vdof{v}{h}\in\HcurlG{\Omega}$ and the $\bvec{L}^2(\Gamma^\compl)$-integrability of $\mu\bvec{w}\times\normal_\Omega$ and $(\Rcurl\vdof{v}{h})_{{\rm t},\partial \Omega}$. Using this relation to substitute the boundary term in \eqref{eq:consistency.curl.term2} and applying again local integrations-by-parts gives
\begin{align*}
\term_2={}&
\sum_{T\in\Th}\langle (\bvec{z}_T-\mu\bvec{w})\times\normal_T, (\Rcurl\vdof{v}{h})_{{\rm t},\partial T}\rangle_{H^{-1/2}_\parallel(\partial T),H^{1/2}_\parallel(\partial T)} +\term_3\\
={}&\sum_{T\in\Th}\left(\int_T (\bvec{z}_T-\mu\bvec{w})\cdot\CURL \Rcurl\vdof{v}{h} - \int_T \CURL(\bvec{z}_T-\mu\bvec{w})\cdot \Rcurl\vdof{v}{h}\right)+\term_3
\end{align*}
from which we get, by Cauchy--Schwarz inequalities, \eqref{eq:Rcurl.norm} and \eqref{eq:consistency.curl.term3},
\begin{align*}
|\term_2|\lesssim{}&
\left(\sum_{T\in\Th}\left(\norm{L^2(T)}{\mu\bvec{w}-\bvec{z}_T}^2+\norm{L^2(T)}{\CURL(\mu\bvec{w}-\bvec{z}_T)}^2\right)\right)^{1/2}\left(\tnorm{\CURL,h}{\vdof{v}{h}}+\tnorm{\DIV,h}{\uC{k}{h}\vdof{v}{h}}\right)\\
&+\Tppr{k}{\alpha,\DIV,h}(\mu\bvec{w}\times\normal_\Omega)\Bigg(\sum_{F\in\FGc}\alpha_F\tnorm{\CURL,F}{\vdof{v}{F}}^2\Bigg)^{1/2}.
\end{align*}
Plugging this bound together with \eqref{eq:consistency.curl.term0} (along with~\eqref{eq:coercivity.sT}) and \eqref{eq:consistency.curl.term1} into \eqref{eq:consistency.curl.1} and selecting $(\bvec{z}_T)_{T\in\Th}$ that realise the minima in $\Bppr{k}{\CURL,h}(\mu\bvec{w})$ concludes the proof of \eqref{eq:adjoint.consistency.curl}.

\medskip

\emph{(iii) Adjoint consistency for the divergence}.

Let $\mathcal{E}_{\DIV,h}(q,\vdof{w}{h})$ be the argument of the absolute value in the left-hand side of
\eqref{eq:adjoint.consistency.div}. The divergence potential satisfies \cite[Eq.~(4.11)]{Di-Pietro.Droniou:23}
\[
\int_Tq_T\,\D{k}{T}\vdof{w}{T}+\int_T \GRAD q_T\cdot\Pdiv{k}{T}\vdof{w}{T}-\sum_{F\in\FT}\omega_{TF}\int_F q_Tw_F = 0\qquad\forall q_T\in\Poly{k+1}(T).
\]
Subtracting this relation in each cell to $\mathcal{E}_{\DIV,h}(q,\vdof{w}{h})$, we find
\begin{align}
\mathcal{E}_{\DIV,h}(q,\vdof{w}{h})={}&
\sum_{T\in\Th}\left(\int_T(\mu q-q_T)\D{k}{T}\vdof{w}{T}+\int_T\GRAD(\mu q-q_T)\cdot\Pdiv{k}{T}\vdof{w}{T}\right)\nonumber\\
&+\sum_{T\in\Th}\left(\sum_{F\in\FT}\omega_{TF}\int_F q_T w_F -\sum_{F\in\FT\cap\Fhb}\omega_{TF}\int_F \mu q w_F\right).
\label{eq:consistency.div.1}
\end{align}
We have
\[
\sum_{T\in\Th}\sum_{F\in\FT}\omega_{TF}\int_F \mu q w_F=\sum_{T\in\Th}\sum_{F\in\FT\cap\Fhb}\omega_{TF}\int_F \mu q w_F,
\]
since the assumption $\mu q\in\Hgrad{\Omega}$ implies that $\mu q$ is single-valued on each face $F$, and thus that the contributions of internal faces above compensate each other. Using this relation to substitute the last term in \eqref{eq:consistency.div.1} we infer
\begin{align*}
\mathcal{E}_{\DIV,h}(q,\vdof{w}{h})={}&
\sum_{T\in\Th}\left(\int_T(\mu q-q_T)\D{k}{T}\vdof{w}{T}+\int_T\GRAD(\mu q-q_T)\cdot\Pdiv{k}{T}\vdof{w}{T}\right)\\
&+\sum_{T\in\Th}\left(\sum_{F\in\FT}\omega_{TF}\int_F (q_T-\mu q) w_F \right).
\end{align*}
Writing $(\mu q-q_T)\D{k}{T}\vdof{w}{T}=h_T^{-1}(\mu q-q_T)h_T\D{k}{T}\vdof{w}{T}$ and $(q_T-\mu q) w_F=h_F^{-1/2}(q_T-\mu q) h_F^{1/2}w_F$, we invoke Cauchy--Schwarz inequalities to get
\begin{align*}
|\mathcal{E}_{\DIV,h}&(q,\vdof{w}{h})|\\
    \lesssim{}&
      \left[
        \sum_{T\in\Th}\left(
        h_T^{-2}\norm{L^2(T)}{\mu q - q_T}^2
        + \norm{L^2(T)}{\GRAD(\mu q - q_T)}^2
        + h_T^{-1}\sum_{F\in\FT}\norm{L^2(F)}{q_T - \mu q}^2
        \right)
        \right]^{\frac12}
      \\
      &\qquad\times\left[
        \sum_{T\in\Th}\left(
        h_T^2\norm{L^2(T)}{\D{k}{T}\vdof{w}{T}}^2
        +\norm{L^2(T)}{\Pdiv{k}{T}\vdof{w}{T}}^2
        + h_T\sum_{F\in\FT}\norm{L^2(F)}{w_F}^2
        \right)
        \right]^{\frac12}.
\end{align*}
The second factor can be bounded by $\tnorm{\DIV,h}{\vdof{w}{h}}$ as in \cite[Proof of Theorem 11]{Di-Pietro.Droniou:23}.
A trace inequality gives $\norm{L^2(F)}{q_T - \mu q}^2\lesssim h_T^{-1}\norm{L^2(T)}{q_T - \mu q}^2+h_T\norm{L^2(T)}{\GRAD(q_T-\mu q)}^2$,
which leads to
\begin{equation*}
|\mathcal{E}_{\DIV,h}(q,\vdof{w}{h})|
    \lesssim{}
      \left[
        \sum_{T\in\Th}\left(
        h_T^{-2}\norm{L^2(T)}{\mu q - q_T}^2
        + \norm{L^2(T)}{\GRAD(\mu q - q_T)}^2
        \right)
        \right]^{\frac12}\tnorm{\DIV,h}{\vdof{w}{h}}.
\end{equation*}
Take now $q_T$ realising $\min_{q_T\in\Poly{k+1}(T)}\norm{L^2(T)}{\GRAD(\mu q-q_T)}^2$, as in the definition of $\Bppr{k}{\GRAD,h}(\mu q)$. We can obviously add a constant to $q_T$ to ensure
that $\int_T(\mu q-q_T)=0$. The local Poincaré--Wirtinger inequality \cite[Remark 1.46]{Di-Pietro.Droniou:20} then gives
$\norm{L^2(T)}{\mu q - q_T}^2\lesssim h_T^2\norm{L^2(T)}{\GRAD(\mu q - q_T)}^2$, and the proof of \eqref{eq:adjoint.consistency.div}
is therefore complete.

\section{Proof of the Maxwell compactness result (Theorem \ref{thm:curl.compactness})}\label{sec:proof.curl.compactness}

Let us start with two preliminary lemmas.

\begin{lemma}[Convergence of approximation errors]\label{lem:Appr.convergence}
Let $\gensymbol\in \{\GRAD,\CURL,\DIV\}$ and recall the definitions \eqref{eq:gen.approx.error}--\eqref{eq:Appr.global}, \eqref{eq:Bppr.grad} and \eqref{eq:Bppr.gen} of the approximation errors. Then, for all $z\in\Hgen{\Omega}$,
\begin{equation}\label{eq:conv.Bppr.z}
\Bppr{k}{\gensymbol,h}(z)\to 0\quad\text{ as $h\to 0$},
\end{equation}
and, if $\mathcal Z$ is a bounded subset of $\Hgen{\Omega}$ that is relatively compact in $L^2(\Omega)$,
\begin{equation}\label{eq:conv.Appr.compact}
\sup_{z\in\mathcal Z}\Appr{k}{\gensymbol,h}(z)\to 0\quad\text{ as $h\to 0$.}
\end{equation}
\end{lemma}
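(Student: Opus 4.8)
The plan is to prove the two convergences separately, in each case combining the polynomial approximation estimates of Remark~\ref{rem:asym.Appr} with a density argument for \eqref{eq:conv.Bppr.z} and with a total-boundedness argument for \eqref{eq:conv.Appr.compact}. No deep ingredient is needed beyond those already in the excerpt.

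\emph{For \eqref{eq:conv.Bppr.z}}, the observation I would exploit is that, for fixed $h$, the map $z\mapsto\Bppr{k}{\gensymbol,h}(z)$ is a seminorm on $\Hgen{\Omega}$: by \eqref{eq:Bppr.grad}--\eqref{eq:Bppr.gen} it is the $\ell^2$-combination over $T\in\Th$ of the distances from $z$ (resp.\ from $(z,\gensymbol z)$) to a fixed finite-dimensional subspace, measured in the $L^2(T)$-norm (resp.\ in the graph norm), hence it obeys the triangle inequality. Given $\eta>0$, I would use density of $C^\infty(\overline\Omega)$ in $\Hgen{\Omega}$ (a classical fact for Lipschitz $\Omega$) to pick a smooth $z_\eta$ with $\norm{L^2(\Omega)}{z-z_\eta}+\norm{L^2(\Omega)}{\gensymbol(z-z_\eta)}<\eta$, and split $\Bppr{k}{\gensymbol,h}(z)\le\Bppr{k}{\gensymbol,h}(z-z_\eta)+\Bppr{k}{\gensymbol,h}(z_\eta)$. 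Choosing the zero polynomial in each local minimisation bounds the first term by $\norm{L^2(\Omega)}{z-z_\eta}+\norm{L^2(\Omega)}{\gensymbol(z-z_\eta)}<\eta$ (for $\gensymbol=\GRAD$, just by $\norm{L^2(\Omega)}{\GRAD(z-z_\eta)}$), while the second term tends to $0$ as $h\to0$ by Remark~\ref{rem:asym.Appr}, since the globally smooth $z_\eta$ has finite broken Sobolev seminorms. Letting $h\to0$ and then $\eta\to0$ concludes.

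\emph{For \eqref{eq:conv.Appr.compact}}, starting from \eqref{eq:gen.approx.error}--\eqref{eq:Appr.global} and using $h_T\le h$, the finite-overlap property of the polytopal patches (each $T'\in\Th$ belongs to $\ppatch{T}$ for $\lesssim1$ elements $T$; see Remark~\ref{rem:asym.Appr}), the inclusion $\Poly{0}(T)\subset\Poly{k_\gensymbol}(T)$ and the contractivity of $L^2$-orthogonal projections, I would obtain the bound, uniform in $z$,
\[
\Appr{k}{\gensymbol,h}(z)^2\lesssim\norm{L^2(\Omega)}{z-\lproj{k_\gensymbol}{\Th}z}^2+h^2\norm{L^2(\Omega)}{(\gensymbol z)-\lproj{k_\gensymbol}{\Th}(\gensymbol z)}^2\le\norm{L^2(\Omega)}{z-\lproj{0}{\Th}z}^2+h^2\norm{L^2(\Omega)}{\gensymbol z}^2.
\]
The last term is $\le h^2\sup_{z\in\mathcal Z}\norm{L^2(\Omega)}{\gensymbol z}^2\to0$ since $\mathcal Z$ is bounded in $\Hgen{\Omega}$. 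For the remaining term I would use that $\mathcal Z$, being relatively compact in $L^2(\Omega)$, is totally bounded: given $\varepsilon>0$, fix $z_1,\dots,z_N\in C^\infty(\overline\Omega)$ whose $\varepsilon$-balls in $L^2(\Omega)$ cover $\mathcal Z$; for $z\in\mathcal Z$ with $\norm{L^2(\Omega)}{z-z_j}<\varepsilon$, contractivity of $\lproj{0}{\Th}$ gives $\norm{L^2(\Omega)}{z-\lproj{0}{\Th}z}\le2\varepsilon+\norm{L^2(\Omega)}{z_j-\lproj{0}{\Th}z_j}$, and $\norm{L^2(\Omega)}{z_j-\lproj{0}{\Th}z_j}\lesssim h\,\seminorm{H^1(\Omega)}{z_j}\to0$ for each of the finitely many $z_j$ (e.g.\ by \cite[Theorem~1.45]{Di-Pietro.Droniou:20}). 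Taking $\limsup_{h\to0}$ and then $\varepsilon\to0$ yields $\sup_{z\in\mathcal Z}\norm{L^2(\Omega)}{z-\lproj{0}{\Th}z}\to0$, which finishes the proof.

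I do not expect a serious obstacle; the only delicate point is making the bound in \eqref{eq:conv.Appr.compact} \emph{uniform} over $\mathcal Z$, which is precisely where total boundedness of $\mathcal Z$ (rather than mere boundedness) is used, in tandem with the uniform finite-overlap of the patches $\ppatch{T}$.
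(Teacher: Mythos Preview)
Your proposal is correct and follows essentially the same approach as the paper: for \eqref{eq:conv.Bppr.z} a density argument in $\Hgen{\Omega}$ combined with the triangle inequality and Remark~\ref{rem:asym.Appr}, and for \eqref{eq:conv.Appr.compact} the finite-overlap bound reducing to $\norm{L^2(\Omega)}{z-\lproj{k_\gensymbol}{\Th}z}+h\norm{L^2(\Omega)}{\gensymbol z}$, followed by a finite $\varepsilon$-net in $L^2(\Omega)$ made of smooth functions. The only cosmetic difference is your extra reduction from $\lproj{k_\gensymbol}{\Th}$ to $\lproj{0}{\Th}$, which is harmless but unnecessary.
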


\begin{proof}
By \cite[Chapter I, Theorems 2.4 and 2.10]{Girault.Raviart:86} and \cite[Corollary 9.8]{Brezis:11}, functions in $C^\infty(\overline{\Omega})$ are dense in $\Hgen{\Omega}$. For all $\epsilon>0$, we can therefore find $z_\epsilon\in C^\infty(\overline{\Omega})$
such that $\norm{\Hgen{\Omega}}{z-z_\epsilon}\le \epsilon$. A triangle inequality easily gives $\Bppr{k}{\gensymbol,h}(z)\le \epsilon+\Bppr{k}{\gensymbol,h}(z_\epsilon)$ and, by Remark \ref{rem:asym.Appr}, we infer $\limsup_{h\to 0}\Bppr{k}{\gensymbol,h}(z)\le \epsilon$. Letting $\epsilon\to 0$
concludes the proof of \eqref{eq:conv.Bppr.z}.

Let us now turn to \eqref{eq:conv.Appr.compact}. For all $z\in\mathcal Z$, since for all $T'\in\ppatch{T}$ the projection $\Id-\lproj{k_\gensymbol}{T'}$ is $1$-Lipschitz for the $L^2(T')$-norm, we have
\[
\Appr{k}{\gensymbol,T}(z)\le \left(\sum_{T'\in\ppatch{T}}\Big[\norm{L^2(T')}{z-\lproj{k_\gensymbol}{T'}z}^2
+h^2\norm{L^2(T')}{\gensymbol z}^2\Big]\right)^{1/2}.
\]
Squaring, summing over $T\in\Th$, and using the fact that each $T'\in\Th$ appears in $\lesssim 1$ polytopal patches $(\ppatch{T})_{T\in\Th}$, we infer
\[
\Appr{k}{\gensymbol,h}(z)\lesssim \left(\sum_{T\in\Th}\norm{L^2(T)}{z-\lproj{k_\gensymbol}{T}z}^2\right)^{1/2}
+h\norm{L^2(\Omega)}{\gensymbol z}.
\]
$\mathcal Z$ being pre-compact in $L^2(\Omega)$ and $C^\infty(\overline{\Omega})$ being dense in $L^2(\Omega)$, for $\epsilon>0$ we can find a finite set $\{z_i\,:\,i\in I_\epsilon\}\subset C^\infty(\overline{\Omega})$ such that any $z\in\mathcal Z$ is within $L^2(\Omega)$-distance $\epsilon$ of some $z_i$. A triangle inequality and the fact that $\mathcal Z$ is bounded in $\Hgen{\Omega}$ then yields
\[
\sup_{z\in\mathcal Z}\Appr{k}{\gensymbol,h}(z)\lesssim \epsilon + \sup_{i\in I_\epsilon}\left(\sum_{T\in\Th}\norm{L^2(T)}{z_i-\lproj{k_\gensymbol}{T}z_i}^2\right)^{1/2}+h.
\]
Each $z_i$ being smooth, we have $\sum_{T\in\Th}\norm{L^2(T)}{z_i-\lproj{k_\gensymbol}{T}z_i}^2\to 0$ as $h\to 0$. As $I_\epsilon$ is finite, we infer $\limsup_{h\to 0}\sup_{z\in\mathcal Z}\Appr{k}{\gensymbol,h}(z)\lesssim \epsilon$. Letting $\epsilon$ tend to $0$ concludes the proof of \eqref{eq:conv.Appr.compact}. \end{proof}

\begin{lemma}[Lifting adapted to the quasi-interpolator]\label{lem:lift.interp}
For all $h\in\mathcal H$, there exists a lifting
$\Liftcurl:\XcurlG{k}{\Th}\to \HcurlG{\Omega}$ and a quasi-interpolator $\qIcurl{k}{h}:\HcurlG{\Omega}\to\XcurlG{k}{\Th}$ as in Section \ref{sec:prop.lift}
such that, for all $\vdof{v}{h}\in\XcurlG{k}{\Th}$,
\begin{alignat}{2}
\label{eq:lift.bound}
&\norm{L^2(\Omega)}{\Liftcurl\vdof{v}{h}}\lesssim\tnorm{\CURL,h}{\vdof{v}{h}}
\quad\text{ and }\quad
\norm{L^2(\Omega)}{\CURL\Liftcurl\vdof{v}{h}}\lesssim\tnorm{\DIV,h}{\uC{k}{h}\vdof{v}{h}},\\
\label{eq:lift.interp}
&\qIcurl{k}{h}\Liftcurl\vdof{v}{h}=\vdof{v}{h},\\
\label{eq:proj.lift.Pkh}
&\lproj{k}{\Th} \Liftcurl\vdof{v}{h} = \Pcurl{k}{h}\vdof{v}{h}.
\end{alignat}
\end{lemma}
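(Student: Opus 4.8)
The plan is to take as lifting $\Liftcurl$ the conforming lifting $\Rcurl\colon\Xcurl{k}{\Th}\to\Hcurl{\Omega}$ of \cite{Di-Pietro.Droniou.ea:25} --- the very operator already invoked in the proof of Theorem~\ref{thm:qI.adjoint.consistency} --- restricted to $\XcurlG{k}{\Th}$, and to pair it with a quasi-interpolator $\qIcurl{k}{h}=\Icurl{k}{h}\circ\qINE{\ell}{h}$ as in~\eqref{eq:def.qIcurl} for a polynomial degree $\ell\ge k+1$ chosen large enough. Since $\vdof{v}{h}\in\XcurlG{k}{\Th}$, the trace bound~\eqref{eq:Rcurl.trace.norm} already forces $\Liftcurl\vdof{v}{h}=\Rcurl\vdof{v}{h}\in\HcurlG{\Omega}$, and~\eqref{eq:lift.bound} is then nothing but the stability estimate~\eqref{eq:Rcurl.norm}; so the content of the lemma lies entirely in~\eqref{eq:lift.interp} and~\eqref{eq:proj.lift.Pkh}.

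For those two, I would rely on two further structural properties of the lifting established in~\cite{Di-Pietro.Droniou.ea:25}: first, that $\Rcurl$ takes values in a N\'ed\'elec finite element space $\NE{\ell_0}{\Sh}$ on the simplicial submesh, for some $\ell_0$ depending only on $k$, and that the canonical DDR interpolator is a left-inverse of it, $\Icurl{k}{h}\Rcurl\vdof{v}{h}=\vdof{v}{h}$ for all $\vdof{v}{h}\in\Xcurl{k}{\Th}$; and second, that the cellwise $L^2$-orthogonal projection of the lifting reproduces the DDR potential reconstruction, $\lproj{k}{\Th}\Rcurl\vdof{v}{h}=\Pcurl{k}{h}\vdof{v}{h}$. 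The latter is precisely~\eqref{eq:proj.lift.Pkh}. To obtain~\eqref{eq:lift.interp}, I would enlarge $\ell$ if needed so that $\ell\ge\ell_0$; then, for $\vdof{v}{h}\in\XcurlG{k}{\Th}$, one has $\Rcurl\vdof{v}{h}\in\NE{\ell}{\Sh}\cap\HcurlG{\Omega}=\NEG{\ell}{\Sh}$, and, since the finite element quasi-interpolators of~\cite{Chaumont-Frelet.Vohralik:24,Chaumont-Frelet.Vohralik:25,ern_gudi_smears_vohralik_2022a} are projections (they restrict to the identity on $\NEG{\ell}{\Sh}$), we get $\qINE{\ell}{h}\Rcurl\vdof{v}{h}=\Rcurl\vdof{v}{h}$, whence $\qIcurl{k}{h}\Liftcurl\vdof{v}{h}=\Icurl{k}{h}\Rcurl\vdof{v}{h}=\vdof{v}{h}$. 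Enlarging $\ell$ is harmless, since all the consistency estimates of Theorems~\ref{thm:qI.primal.consistency} and~\ref{thm:qI.adjoint.consistency} hold for every $\ell\ge k+1$.

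The main obstacle is not the bookkeeping above but the justification of the two structural properties of $\Rcurl$: one must go into the construction of~\cite{Di-Pietro.Droniou.ea:25} and check that the conforming lifting there is indeed finite-element-valued of controlled degree, that it is a genuine section of the canonical DDR interpolator, and that its cellwise $L^2$-projection returns the DDR potential --- these are exactly the features the present quasi-interpolators are designed to exploit (``left-inverse of the conforming lifting''). Everything else --- inheriting~\eqref{eq:lift.bound} and the membership $\Rcurl\vdof{v}{h}\in\HcurlG{\Omega}$ from~\eqref{eq:Rcurl.trace.norm}--\eqref{eq:Rcurl.norm}, and the reproduction of finite element functions by the commuting quasi-interpolator $\qINE{\ell}{h}$ --- is routine.
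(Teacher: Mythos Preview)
Your proposal is correct and follows essentially the same route as the paper: take $\Liftcurl$ to be the restriction of the lifting $\Rcurl$ from \cite{Di-Pietro.Droniou.ea:25}, cite its structural properties (finite-element-valued of controlled degree, left-inverse of $\Icurl{k}{h}$, $L^2$-projection reproducing $\Pcurl{k}{h}$), and choose $\ell$ large enough so that the projection property of $\qINE{\ell}{h}$ applies. The paper makes exactly this argument, with the only additional information being the explicit degree $\ell_0=k+3$ (citing \cite[Remarks~4 and~6]{Di-Pietro.Droniou.ea:25} for the co-domain $\NEG{k+3}{\Sh}$ and \cite[Remark~5]{Di-Pietro.Droniou.ea:25} for $\Icurl{k}{h}\Rcurl=\Id$ and the projection identity), so your ``main obstacle'' is resolved precisely by those references.
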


\begin{proof}
We take $\Liftcurl: \XcurlG{k}{\Th}\to  \Hcurl{\Omega}$
as the restriction to $\XcurlG{k}{\Th}$ of the lifting $\Xcurl{k}{\Th}\to  \Hcurl{\Omega}$ given by \cite[Theorem 3]{Di-Pietro.Droniou.ea:25} for the DDR complex of differential forms (with dimension $n=3$ and form degree $k=1$ in this reference). By \cite[Remarks 4 and 6]{Di-Pietro.Droniou.ea:25}, $\Liftcurl$ has $\NEG{k+3}{\Sh}\subset \HcurlG{\Omega}$ as co-domain.
Recalling the vector proxy representation of differential forms (see, e.g., \cite{Arnold:18} or \cite[Appendix A]{Bonaldi.Di-Pietro.ea:25}), and that for any $r\ge 0$ the trimmed polynomial space of degree $r+1$ contains the full polynomial space of degree $r$, we can use \cite[Theorem 3 and Remark 5]{Di-Pietro.Droniou.ea:25} to see that \eqref{eq:lift.bound} and \eqref{eq:proj.lift.Pkh} hold. 

Let us take the quasi-interpolator $\qIcurl{k}{h}$ constructed as in Section \ref{sec:prop.lift} for $\ell=k+3$. For all $\vdof{v}{h}\in\XcurlG{k}{\Th}$, since $\qINE{\ell}{h}$ is a quasi-interpolator on $\NEG{\ell}{\Sh}$ to which $\Liftcurl\vdof{v}{h}$ belongs, we have $\qINE{\ell}{h}\Liftcurl\vdof{v}{h}=\Liftcurl\vdof{v}{h}$. As $\Icurl{k}{h}\Liftcurl\vdof{v}{h}=\vdof{v}{h}$ by \cite[Remark 5]{Di-Pietro.Droniou.ea:25}, we obtain \eqref{eq:lift.interp} by writing
\[
\qIcurl{k}{h}\Liftcurl\vdof{v}{h}\overset{\eqref{eq:def.qIcurl}}=\Icurl{k}{h}\qINE{\ell}{h}\Liftcurl\vdof{v}{h}=\Icurl{k}{h}\Liftcurl\vdof{v}{h}=\vdof{v}{h}. \qedhere
\]
\end{proof}

We are now ready to prove the Maxwell compactness result for the DDR complex.

\begin{proof}[Proof of Theorem \ref{thm:curl.compactness}]~

\emph{Step 1: weak convergences.}

Let $\Liftcurl$ be the lifting provided by Lemma \ref{lem:lift.interp}. By \eqref{eq:compact.bound} and \eqref{eq:lift.bound}, $(\Liftcurl\vdof{v}{h})_{h\in\mathcal H}$ is bounded in $\HcurlG{\Omega}$ and there exists therefore $\bvec{v}\in\HcurlG{\Omega}$ such that, up to a subsequence as $h\to 0$, $\Liftcurl\vdof{v}{h}\to \bvec{v}$ weakly in $\HcurlG{\Omega}$.

Recalling \eqref{eq:proj.lift.Pkh} and that $\lproj{k}{\Th}$ is self-adjoint for the $L^2(\Omega)^3$-inner product, we note that, for all
$\bvec{\phi}\in L^2(\Omega)^3$,
\[
\int_\Omega \Pcurl{k}{h}\vdof{v}{h}\cdot\bvec{\phi}=\int_\Omega \lproj{k}{\Th}\Liftcurl\vdof{v}{h}\cdot\bvec{\phi}
=\int_\Omega \Liftcurl\vdof{v}{h}\cdot\lproj{k}{\Th}\bvec{\phi}\to \int_\Omega \bvec{v}\cdot\bvec{\phi}\quad\text{as $h\to 0$,}
\]
the conclusion following from the weak $L^2$-convergence $\Liftcurl\vdof{v}{h}\to \bvec{v}$ and the strong $L^2$-convergence $\lproj{k}{\Th}\bvec{\phi}\to \bvec{\phi}$. This proves that $\Pcurl{k}{h}\vdof{v}{h}\to\bvec{v}$ weakly in $L^2(\Omega)^3$.

\medskip

\emph{Step 2: proof that $\bvec{v}\in\HdivGcmu{\Omega}$ and that $\DIV(\mu\bvec{v})\equiv 0$}.

Let $q\in\HgradG{\Omega}$ and apply \eqref{eq:compact.orth} to $\dof{q}{h}=\qIgrad{k}{h}q$ to get
\[
0=(\vdof{v}{h},\uG{k}{h}\qIgrad{k}{h}q)_{\mu,\CURL,h}=
(\vdof{v}{h},\qIcurl{k}{h}(\GRAD q))_{\mu,\CURL,h},
\]
where we have used the cochain property of $\qIgen{k}{h}$ to infer the second equality.
We then apply the consistency \eqref{eq:consistency.innergen} of the local inner products (with $\gensymbol=\CURL$, $z=\GRAD q$ and $\dof{y}{T}=\vdof{v}{T}$), sum over $T\in\Th$, and use the Cauchy--Schwarz inequality together with~\eqref{eq:coercivity.sT}, to infer
\[
\left|\int_{\Omega}\mu\GRAD q\cdot \Pcurl{k}{h}\vdof{v}{h}\right|
\lesssim \Appr{k}{\CURL,h}(\GRAD q)\tnorm{\CURL,h}{\vdof{v}{h}}\overset{\eqref{eq:compact.bound}}\lesssim \Appr{k}{\CURL,h}(\GRAD q).
\]
We have $\Appr{k}{\CURL,h}(\GRAD q)\to 0$ by \eqref{eq:conv.Appr.compact}. Since $\Pcurl{k}{h}\vdof{v}{h}\to \bvec{v}$ weakly in $L^2(\Omega)^3$, we can therefore pass to the limit and use the symmetry of $\mu$ to deduce that
\[
\int_\Omega \GRAD q\cdot\mu\bvec{v}=0.
\]
Applying this to $q\in C^\infty_c(\Omega)$ yields $\DIV (\mu\bvec{v})\equiv 0$, so that $\bvec{v}\in\Hdivmu{\Omega}$, and considering
then a generic $q\in \HgradG{\Omega}$ gives $\mu\bvec{v}\cdot\normal_\Omega\equiv 0$ on $\Gamma^\compl$.

\medskip

\emph{Step 3: strong convergences}.

We start by writing the Hodge decomposition of $\bvec{v}-\Liftcurl\vdof{v}{h}$:
\begin{equation}\label{eq:hodge}
\bvec{v}-\Liftcurl\vdof{v}{h}=\bvec{w}(h)+\GRAD q(h)
\end{equation}
with $\bvec{w}(h)\in\HdivGc{\Omega}$ such that $\DIV\bvec{w}(h)\equiv 0$ and $q(h)\in \HgradG{\Omega}$ (the purpose of the notations $\bvec{w}(h)$ and $q(h)$ is to recall that these functions depend on $h$, but we avoid using the subscript to distinguish them from elements of the discrete spaces). 
Since the Hodge decomposition is orthogonal, the $L^2$-norms of $\bvec{w}(h)$ and $\GRAD q(h)$ are bounded by
$\norm{L^2(\Omega)}{\bvec{v}-\Liftcurl\vdof{v}{h}}\lesssim 1$, owing to \eqref{eq:lift.bound} and \eqref{eq:compact.bound}.
We can always choose $q(h)$ such that its average on each connected component of $\Omega$ that does not touch $\Gamma$ vanishes.
Applying then Poincar\'e inequalities on the connected components that touch $\Gamma$ and Poincar\'e--Wirtinger inequalities on the other connected components, we obtain
\begin{equation}\label{eq:bound.q(h)}
\norm{L^2(\Omega)}{q(h)}\lesssim\norm{L^2(\Omega)}{\GRAD q(h)}\lesssim 1.
\end{equation}
Since $\bvec{v}$, $\Liftcurl\vdof{v}{h}$ and $\GRAD q(h)$ all belong to $\HcurlG{\Omega}$, the relation \eqref{eq:hodge} shows that $\bvec{w}(h)\in \HcurlG{\Omega}$ and 
$\CURL \bvec{w}(h)=\CURL(\bvec{v}-\Liftcurl\vdof{v}{h})$, using $\CURL\GRAD=\bvec{0}$. Invoking \eqref{eq:lift.bound} and \eqref{eq:compact.bound} once again, together with the compactness results of \cite[Theorem 5]{Jochmann:97} (see also \cite{Weck:74,Weber:80} and \cite[Proposition 3.7]{Amrouche.Bernardi.ea:98} for $\Gamma=\partial\Omega$), we infer that
\begin{equation}\label{eq:wh.bound.compact}
  \begin{aligned}
  &\text{$\{\bvec{w}(h)\,:\,h\in\mathcal H\}$ is bounded in $\HcurlG{\Omega}\cap \HdivGc{\Omega}$}\\
  &\text{and (thus) relatively compact in $L^2(\Omega)^3$.}
  \end{aligned}
\end{equation}

By \eqref{eq:hodge} and \eqref{eq:lift.interp}, and using the cochain property of $\qIgen{k}{h}$, we have 
\[
\qIcurl{k}{h}\bvec{v}-\vdof{v}{h}=\qIcurl{k}{h}\Big(\bvec{w}(h) +\GRAD q(h)\Big)=
\qIcurl{k}{h}\bvec{w}(h) +\uG{k}{h} \qIgrad{k}{h}q(h).
\]
Hence,
\begin{align}
\norm{\mu,\CURL,h}{\qIcurl{k}{h}\bvec{v}-\vdof{v}{h}}^2&=
\big(\qIcurl{k}{h}\bvec{v}-\vdof{v}{h},\qIcurl{k}{h}\bvec{w}(h)+\uG{k}{h}\qIgrad{k}{h}q(h)\big)_{\mu,\CURL,h}\nonumber\\
\overset{\eqref{eq:compact.orth}}&=
\big(\qIcurl{k}{h}\bvec{v}-\vdof{v}{h},\qIcurl{k}{h}\bvec{w}(h)\big)_{\mu,\CURL,h}
+ \big(\qIcurl{k}{h}\bvec{v},\uG{k}{h}\qIgrad{k}{h}q(h)\big)_{\mu,\CURL,h}\nonumber\\
&=\term_1+\term_2.
\label{eq:compact.conv.curl.h}
\end{align}

The term $\term_1$ can be estimated using the primal consistency of the inner product \eqref{eq:consistency.innergen} (along with~\eqref{eq:coercivity.sT}):
\begin{align}
\left|\term_1-\int_\Omega \mu\bvec{w}(h)\cdot\big(\Pcurl{k}{h}\qIcurl{k}{h}\bvec{v}-\Pcurl{k}{h}\vdof{v}{h}\big)\right|
&\lesssim \Appr{k}{\CURL,h}(\bvec{w}(h))\tnorm{\CURL,h}{\qIcurl{k}{h}\bvec{v}-\vdof{v}{h}}\nonumber\\
\overset{ \eqref{eq:global.qI.bound}, \eqref{eq:compact.bound}}&\lesssim \Appr{k}{\CURL,h}(\bvec{w}(h)).
\label{eq:compact.bound.T1}
\end{align}
By \eqref{eq:consistency.Pgen} and \eqref{eq:conv.Appr.compact} we have $\Pcurl{k}{h}\qIcurl{k}{h}\bvec{v}\to \bvec{v}$ in $L^2(\Omega)^3$ as $h\to  0$. Recalling that $\Pcurl{k}{h}\vdof{v}{h}\to \bvec{v}$ weakly in $L^2(\Omega)^3$ and that $\mathcal Z\coloneq \{w(h)\,:\,h\in\mathcal H\}$ is relatively compact in $L^2(\Omega)^3$ (see \eqref{eq:wh.bound.compact}), we deduce that
\[
\int_\Omega \mu \bvec{w}(h)\cdot (\Pcurl{k}{h}\qIcurl{k}{h}\bvec{v}-\Pcurl{k}{h}\vdof{v}{h})\to 0\text{ as $h\to 0$}.
\]
Moreover, by \eqref{eq:conv.Appr.compact} and \eqref{eq:wh.bound.compact}, $\Appr{k}{\CURL,h}(\bvec{w}(h))\le \sup_{\bvec{z}\in\mathcal Z}\Appr{k}{\CURL,h}(z)\to 0$ as $h\to 0$. Plugging these convergences into \eqref{eq:compact.bound.T1} shows that $\term_1\to 0$ as $h\to 0$.

Let us now consider $\term_2$. Since $\dof{q}{h}\coloneq\qIgrad{k}{h}q(h)\in\XgradG{k}{\Th}$ and $\bvec{v}\in\Hdivmu{\Omega}\cap\Hcurl{\Omega}$ is such that $\mu\bvec{v}\cdot\normal_\Omega$ belongs to $L^2(\Gamma^\compl)$ (since it vanishes on $\Gamma^\compl$), we can apply the adjoint consistency estimate \eqref{eq:adjoint.consistency.grad} -- invoking Remark \ref{rem:BC.qI} to justify the use of this bound with the considered quasi-interpolator $\qIcurl{k}{h}$ embedding the zero boundary condition on $\Gamma$. Recalling that $\DIV(\mu\bvec{v})\equiv 0$, that $\mu\bvec{v}\cdot\normal_\Omega\equiv 0$ on $\Gamma^\compl$ (which implies $\Tppr{k}{h}(\mu\bvec{v}\cdot\normal_\Omega)=0$), and leveraging the cochain map property of $\qIgen{k}{h}$, \eqref{eq:adjoint.consistency.grad} gives
\[
|\term_2|\lesssim \left(\Appr{k}{\CURL,h}(\bvec{v})+\Bppr{k}{\DIV,h}(\mu \bvec{v})\right)\Big(\underbrace{\tnorm{\GRAD,h}{\qIgrad{k}{h}q(h)}+\tnorm{\CURL,h}{\qIcurl{k}{h}\GRAD q(h)}}_{\lesssim 1\text{ by \eqref{eq:global.qI.bound},\eqref{eq:bound.q(h)}}}\Big).
\]
Lemma \ref{lem:Appr.convergence} then yields $\term_2\to 0$ as $h\to 0$. Together with the convergence $\term_1\to 0$ and
\eqref{eq:compact.conv.curl.h}, this establishes that
\[
\norm{\mu,\CURL,h}{\qIcurl{k}{h}\bvec{v}-\vdof{v}{h}}\to 0\text{ as $h\to 0$}.
\]
Using the definition \eqref{eq:def.L2.norm} of the $\mu$-weighted $L^2$-like norm on $\Xcurl{k}{\Th}$ and the coercivity of $\mu$, we deduce that, as $h\to 0$,
\begin{alignat}{2}
\label{eq:conv.PIv}
&\norm{L^2(\Omega)}{\Pcurl{k}{h}\qIcurl{k}{h}\bvec{v}-\Pcurl{k}{h}\vdof{v}{h}}\to 0,\\
\label{eq:conv.sT}
&s_{\CURL,h}(\qIcurl{k}{h}\bvec{v}-\vdof{v}{h},\qIcurl{k}{h}\bvec{v}-\vdof{v}{h})\to 0.
\end{alignat}
We have established above that $\Pcurl{k}{h}\qIcurl{k}{h}\bvec{v}\to \bvec{v}$ strongly in $L^2(\Omega)^3$, so \eqref{eq:conv.PIv}
concludes the proof that $\Pcurl{k}{h}\vdof{v}{h}\to \bvec{v}$ strongly in the same space. A triangle inequality on the
seminorm $\vdof{z}{h}\to s_{\CURL,h}(\vdof{z}{h},\vdof{z}{h})^{1/2}$ shows that
\[
s_{\CURL,h}(\vdof{v}{h},\vdof{v}{h})^{1/2}\le s_{\CURL,h}(\qIcurl{k}{h}\bvec{v}-\vdof{v}{h},\qIcurl{k}{h}\bvec{v}-\vdof{v}{h})^{1/2}
+s_{\CURL,h}(\qIcurl{k}{h}\bvec{v},\qIcurl{k}{h}\bvec{v})^{1/2}.
\]
The first term in the right-hand side converges to $0$ by \eqref{eq:conv.sT}, and the second also by \eqref{eq:consistency.sTgen}
and \eqref{eq:conv.Appr.compact}. This concludes the proof that $s_{\CURL,h}(\vdof{v}{h},\vdof{v}{h})\to 0$
as $h\to 0$.
\end{proof}

\section*{Acknowledgements}

JD acknowledges the funding of the European Union via the ERC Synergy, NEMESIS, project number 101115663.
Views and opinions expressed are however those of the authors only and do not necessarily reflect those of the European Union or the European Research Council Executive Agency.
Neither the European Union nor the granting authority can be held responsible for them.
The work of SL is supported by the Agence Nationale de la
Recherche (ANR) under the PRCE grant HIPOTHEC (ANR-23-CE46-0013).
TCF and SL also acknowledge support from the LabEx CEMPI (ANR-11-LABX-0007).

\appendix

\section{Discrete trace inequalities in $\Xgrad{k}{\Th}$}\label{appen:trace}

\begin{lemma}[Discrete trace inequalities in $\Xgrad{k}{\Th}$]\label{lem:trace.Xgrad}
If $\Mh$ is a member of a regular polytopal mesh sequence then, for all $\dof{q}{h}\in\Xgrad{k}{\Th}$,
\begin{alignat}{2}
\label{eq:trace.trq}
\sum_{F\in\Fhb}\norm{L^2(F)}{\tr{k+1}{F}\dof{q}{F}}^2\lesssim{}& \tnorm{\GRAD,h}{\dof{q}{h}}^2+\tnorm{\CURL,h}{\uG{k}{h}\dof{q}{h}}^2,\\
\label{eq:trace.tnormq}
\sum_{F\in\Fhb}\tnorm{\GRAD,F}{\dof{q}{F}}^2\lesssim{}& \tnorm{\GRAD,h}{\dof{q}{h}}^2+\tnorm{\CURL,h}{\uG{k}{h}\dof{q}{h}}^2.
\end{alignat}
\end{lemma}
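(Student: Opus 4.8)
The plan is to lift $\dof{q}{h}$ to a genuine, continuous, piecewise-polynomial function on $\overline\Omega$ by means of the conforming operator $\Rgrad:\Xgrad{k}{\Th}\to\Hgrad{\Omega}$ of \cite{Di-Pietro.Droniou.ea:25} recalled in \eqref{eq:lift.Rgrad.proj}--\eqref{eq:lift.Rgrad.norm} (which, exactly as in \eqref{eq:lift.interp} for the curl lifting, is a right inverse of the DDR interpolator: $\Igrad{k}{h}\Rgrad=\Id$, cf.\ \cite[Remark~5]{Di-Pietro.Droniou.ea:25}), and then to bound the boundary quantities in \eqref{eq:trace.tnormq} by traces of $\Rgrad\dof{q}{h}$ on $\partial\Omega$ controlled through the continuous trace theorem $H^1(\Omega)\hookrightarrow L^2(\partial\Omega)$. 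Estimate \eqref{eq:trace.trq} will then come for free from \eqref{eq:trace.tnormq} and \cite[Proposition~6]{Di-Pietro.Droniou:23}.

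\emph{Step~1.} From \eqref{eq:lift.Rgrad.norm}, $\Rgrad\dof{q}{h}\in\Hgrad{\Omega}=H^1(\Omega)$ with $\norm{H^1(\Omega)}{\Rgrad\dof{q}{h}}\lesssim\tnorm{\GRAD,h}{\dof{q}{h}}+\tnorm{\CURL,h}{\uG{k}{h}\dof{q}{h}}$, so the trace theorem on the Lipschitz set $\Omega$ gives
\[
\norm{L^2(\partial\Omega)}{\Rgrad\dof{q}{h}}^2=\sum_{F\in\Fhb}\norm{L^2(F)}{\Rgrad\dof{q}{h}}^2\lesssim\tnorm{\GRAD,h}{\dof{q}{h}}^2+\tnorm{\CURL,h}{\uG{k}{h}\dof{q}{h}}^2 .
\]
Since $\Igrad{k}{h}\Rgrad\dof{q}{h}=\dof{q}{h}$ and $\Rgrad\dof{q}{h}$ is piecewise polynomial on the matching simplicial submesh $\MSh$ (hence continuous on $\overline\Omega$, so its edge traces and vertex values are well defined), reading off the components of \eqref{eq:interpolators} yields, for every boundary face $F$, edge $E$ and vertex $V$, the identities $q_F=\lproj{k-1}{F}(\Rgrad\dof{q}{h}|_F)$, $q_E=\lproj{k-1}{E}(\Rgrad\dof{q}{h}|_E)$ and $q_V=(\Rgrad\dof{q}{h})(\bvec{x}_V)$.

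\emph{Step~2 (proof of \eqref{eq:trace.tnormq}).} I would bound, on each $F\in\Fhb$, the three pieces of $\tnorm{\GRAD,F}{\dof{q}{F}}$ against $\norm{L^2(F)}{\Rgrad\dof{q}{h}}$. The face piece is immediate: $\norm{L^2(F)}{q_F}\le\norm{L^2(F)}{\Rgrad\dof{q}{h}}$. For the edge piece, since $F$ (resp.\ $E$) is a union of simplicial faces (resp.\ edges) of $\MSh$ of diameter $\eqsim h_F$ (resp.\ $\eqsim h_E\eqsim h_F$ by mesh regularity), applying the discrete trace inequality \cite[Lemma~1.32]{Di-Pietro.Droniou:20} on each simplicial face abutting $E$ to the polynomial $\Rgrad\dof{q}{h}|_F$ and summing gives $\norm{L^2(E)}{\Rgrad\dof{q}{h}}^2\lesssim h_F^{-1}\norm{L^2(F)}{\Rgrad\dof{q}{h}}^2$, whence $h_F\sum_{E\in\EF}\norm{L^2(E)}{q_E}^2\lesssim\norm{L^2(F)}{\Rgrad\dof{q}{h}}^2$ ($\#\EF\lesssim1$). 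For the vertex piece, an iterated (two-level) discrete trace/inverse estimate \cite[Lemma~1.32]{Di-Pietro.Droniou:20} on a simplicial face containing $V$ gives $|q_V|\lesssim h_F^{-1}\norm{L^2(F)}{\Rgrad\dof{q}{h}}$, so $h_F^2\sum_{V\in\VF}|q_V|^2\lesssim\norm{L^2(F)}{\Rgrad\dof{q}{h}}^2$. Summing over $F\in\Fhb$ and using Step~1 yields $\sum_{F\in\Fhb}\tnorm{\GRAD,F}{\dof{q}{F}}^2\lesssim\norm{L^2(\partial\Omega)}{\Rgrad\dof{q}{h}}^2\lesssim\tnorm{\GRAD,h}{\dof{q}{h}}^2+\tnorm{\CURL,h}{\uG{k}{h}\dof{q}{h}}^2$, i.e.\ \eqref{eq:trace.tnormq}. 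Estimate \eqref{eq:trace.trq} then follows by summing $\norm{L^2(F)}{\tr{k+1}{F}\dof{q}{F}}\lesssim\tnorm{\GRAD,F}{\dof{q}{F}}$ (from \cite[Proposition~6]{Di-Pietro.Droniou:23}) over $F\in\Fhb$.

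\emph{Main obstacle.} These are genuinely trace-type inequalities, so no element-by-element argument can work: summing a (necessarily wasteful) local trace estimate over the $O(h^{-2})$ boundary entities would produce a negative power of $h$. The essential trick that avoids this is to pass through the conforming lift and the \emph{global} $H^1(\Omega)\hookrightarrow L^2(\partial\Omega)$ inequality; the remaining subtle point is the $h$-weight bookkeeping of Step~2, namely that the extra powers of $h_F$ carried by the lower-dimensional unknowns in the definition \eqref{eq:def.component.norm} of the components norm are exactly those consumed by the discrete trace and inverse inequalities on $\MSh$. This, together with the structural fact that $\Rgrad$ is a right inverse of $\Igrad{k}{h}$ (so that the raw DDR boundary unknowns are $L^2$-projections, resp.\ point values, of the single conforming function $\Rgrad\dof{q}{h}$), is what makes the estimates close; I expect the only place requiring care to be the precise invocation of the right-inverse property and of the simplicial-submesh trace inequalities.
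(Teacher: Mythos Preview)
Your proof is correct and takes a genuinely different route from the paper's. The paper proceeds in the opposite order: it first proves \eqref{eq:trace.trq} by subtracting the mean $C=\frac{1}{|\Omega|}\int_\Omega\Pgrad{k+1}{h}\dof{q}{h}$ and applying the discrete HHO trace inequality \cite[Theorem~6.7]{Di-Pietro.Droniou:20} to the hybrid vector $\big((\Pgrad{k+1}{T}\dof{\tilde q}{T})_T,(\tr{k+1}{F}\dof{\tilde q}{F})_F\big)$, bounding the resulting $H^1$-like seminorm by $\tnorm{\CURL,h}{\uG{k}{h}\dof{q}{h}}$ via \cite[Eq.~(5.6)]{Di-Pietro.Droniou:23}. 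It then derives \eqref{eq:trace.tnormq} from \eqref{eq:trace.trq} by inserting $\pm\Igrad{k}{F}\tr{k+1}{F}\dof{q}{F}$ and controlling the difference through \cite[Lemma~7]{Di-Pietro.Droniou:23}. Your approach instead passes through the conforming lift $\Rgrad$ and the \emph{continuous} trace theorem, then pulls back to the DDR degrees of freedom via the right-inverse property $\Igrad{k}{h}\Rgrad=\Id$ and discrete trace inequalities on the simplicial submesh. Your argument is arguably more conceptual---it makes transparent that the discrete trace estimate is inherited from the continuous one through the lift---and it avoids the mean-subtraction detour and the appeal to the HHO trace machinery. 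The paper's route, on the other hand, stays entirely within already-established discrete estimates and does not need to invoke the piecewise-polynomial structure of $\Rgrad\dof{q}{h}$ on $\MSh$ or the full right-inverse property (only the weaker face projection identity \eqref{eq:lift.Rgrad.proj} is used elsewhere in the paper). Both arguments close with the same $h$-weight bookkeeping that you correctly identify as the crux.
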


\begin{proof}
Let us start with \eqref{eq:trace.trq}. Let $\dof{q}{h}\in\Xgrad{k}{\Th}$ and set $C=\frac{1}{|\Omega|}\int_\Omega\Pgrad{k+1}{h}\dof{q}{h}$.
Then, $\dof{\tilde{q}}{h}\coloneq \dof{q}{h}-\Igrad{k}{h}C\in\Xgrad{k}{\Th}$ satisfies $\int_\Omega \Pgrad{k+1}{h}\dof{\tilde{q}}{h}=0$, since
$\Pgrad{k+1}{h}\Igrad{k}{h}C=C$ by polynomial consistency of the potential reconstruction \cite[Eq.~(4.2)]{Di-Pietro.Droniou:23}.
Hence, we can apply the trace inequality of \cite[Theorem 6.7]{Di-Pietro.Droniou:20} (with $p=2$) to the hybrid vector
$\big((\Pgrad{k+1}{T}\dof{\tilde{q}}{T})_{T\in\Th},(\tr{k+1}{F}\dof{\tilde{q}}{F})_{F\in\Fh}\big)$ to get
\begin{align}
\sum_{F\in\Fhb}\norm{L^2(F)}{\tr{k+1}{F}\dof{\tilde{q}}{F}}^2&\lesssim \sum_{T\in\Th}\left(\norm{L^2(T)}{\GRAD \Pgrad{k+1}{T}\dof{\tilde{q}}{T}}^2
+\sum_{F\in\FT}h_F^{-1}\norm{L^2(F)}{\Pgrad{k+1}{T}\dof{\tilde{q}}{T}-\tr{k+1}{F}\dof{\tilde{q}}{F}}^2\right)\nonumber\\
&\eqcolon \sum_{T\in\Th}\term_T.
\label{eq:trace.1}
\end{align}
The estimate \cite[Eq.~(5.6)]{Di-Pietro.Droniou:23} shows that $\term_T\lesssim \tnorm{\CURL,T}{\uG{k}{T}\dof{\tilde{q}}{T}}^2= \tnorm{\CURL,T}{\uG{k}{T}\dof{q}{T}}^2$ (using the polynomial consistency \cite[Eq.~(3.16)]{Di-Pietro.Droniou:23} of $\uG{k}{T}$).
In turn, the polynomial consistency of $\tr{k+1}{F}$ (see \cite[Eq.~(3.14)]{Di-Pietro.Droniou:23}) yields $\tr{k+1}{F}\dof{\tilde{q}}{F}=\tr{k+1}{F}\dof{q}{F}-C$.
Plugging these into \eqref{eq:trace.1} and using a triangle inequality gives
\[
\sum_{F\in\Fhb}\norm{L^2(F)}{\tr{k+1}{F}\dof{q}{F}}^2\lesssim |\partial\Omega| C^2+\tnorm{\CURL,h}{\uG{k}{h}\dof{q}{h}}^2
\lesssim \norm{L^2(\Omega)}{\Pgrad{k+1}{h}\dof{q}{h}}^2+\tnorm{\CURL,h}{\uG{k}{h}\dof{q}{h}}^2,
\]
where the conclusion is obtained recalling the definition of $C$ and using a Cauchy--Schwarz inequality. The proof of \eqref{eq:trace.trq} is complete
by recalling \eqref{eq:coercivity.sT} to write $ \norm{L^2(\Omega)}{\Pgrad{k+1}{h}\dof{q}{h}}^2 \lesssim\tnorm{\GRAD,h}{\dof{q}{h}}^2$.

To prove \eqref{eq:trace.tnormq}, we introduce $\pm\Igrad{k}{F}\tr{k+1}{F}\dof{q}{F}$ in the norm below and use a triangle inequality to get
\begin{equation}\label{eq:trace.2}
\tnorm{\GRAD,F}{\dof{q}{F}}^2\lesssim \tnorm{\GRAD,F}{\dof{q}{F}-\Igrad{k}{F}\tr{k+1}{F}\dof{q}{F}}^2+
\tnorm{\GRAD,F}{\Igrad{k}{F}\tr{k+1}{F}\dof{q}{F}}^2.
\end{equation}
On the one hand, the definition \eqref{eq:interpolators} of the interpolator and the property $\lproj{k-1}{F}\tr{k+1}{F}\dof{q}{F}=q_F$ (see \cite[Eq.~(3.15)]{Di-Pietro.Droniou:23}) directly give
\begin{align*}
 \tnorm{\GRAD,F}{\dof{q}{F}-\Igrad{k}{F}\tr{k+1}{F}\dof{q}{F}}^2={}&h_F\sum_{E\in\EF}\norm{L^2(E)}{q_E-\lproj{k-1}{E}\tr{k+1}{F}\dof{q}{F}}^2
 +h_F^2\sum_{V\in\VF}|q_V-\tr{k+1}{F}\dof{q}{F}(\bvec{x}_V)|^2\\
 \lesssim{}& h_F\sum_{E\in\EF}\norm{L^2(E)}{\tr{k+1}{E}\dof{q}{E}-\tr{k+1}{F}\dof{q}{F}}^2,
\end{align*}
where the inequality follows from the definition of $\tr{k+1}{E}\dof{q}{E}$ (which satisfies $\lproj{k-1}{E}\tr{k+1}{E}\dof{q}{E}=q_E$ and $\tr{k+1}{E}\dof{q}{E}(\bvec{x}_V)=q_V$ for all $V\in\VE$) and discrete trace inequalities.
Using \cite[Lemma 7]{Di-Pietro.Droniou:23} (in which $q_E$ is actually $\tr{k+1}{E}\dof{q}{E}$), we infer
\begin{equation}\label{eq:trace.3}
 \tnorm{\GRAD,F}{\dof{q}{F}-\Igrad{k}{F}\tr{k+1}{F}\dof{q}{F}}^2\lesssim
 h_F^2\tnorm{\CURL,F}{\uG{k}{F}\dof{q}{F}}^2.
 \end{equation}
On the other hand, in the same way as in the proof of Lemma \ref{lem:bound.I.FE}, the definitions of the interpolator and of the components norm \eqref{eq:def.component.norm} together with discrete trace inequalities yield
\[
\tnorm{\GRAD,F}{\Igrad{k}{F}\tr{k+1}{F}\dof{q}{F}}^2\lesssim \norm{L^2(F)}{\tr{k+1}{F}\dof{q}{F}}^2.
\]
Plugging this together with \eqref{eq:trace.3} into \eqref{eq:trace.2}, summing over $F\in\Fhb$, and invoking \eqref{eq:trace.trq}, gives
\[
\sum_{F\in\Fhb}\tnorm{\GRAD,F}{\dof{q}{F}}^2\lesssim \sum_{F\in\Fhb} h_F^2\tnorm{\CURL,F}{\uG{k}{F}\dof{q}{F}}^2+
\tnorm{\GRAD,h}{\dof{q}{h}}^2+\tnorm{\CURL,h}{\uG{k}{h}\dof{q}{h}}^2.
\]
The proof of \eqref{eq:trace.tnormq} is complete by noticing that, for $F\in\Fhb$, if $T\in\Th$ is the element that contains $F$ in its boundary, we have
$h_F^2\tnorm{\CURL,F}{\uG{k}{F}\dof{q}{F}}^2\le h_F\tnorm{\CURL,T}{\uG{k}{T}\dof{q}{T}}^2\lesssim \tnorm{\CURL,T}{\uG{k}{T}\dof{q}{T}}^2$ by definition of the components norm. \end{proof}

\printbibliography


\end{document}